\documentclass[10pt, a4paper, twoside, fleqn]{article}
\usepackage{lmodern}

\newcommand{\PDFAuthor}{{O.~Mula, A.~Somacal}}
\newcommand{\PDFDocTitle}{A tutorial on Reduced Order Modeling}

\usepackage[printonlyused,withpage]{acronym}
\usepackage{amsmath}
\usepackage{amsthm}
\usepackage{amsfonts}
\usepackage{amssymb}

\usepackage{xcolor}

\definecolor{tue_red}{HTML}{C81919}

\definecolor{tue_red}{HTML}{C81919}
\definecolor{tue_dark_blue}{HTML}{101073}
\definecolor{tue_blue}{HTML}{0066CC}
\definecolor{tue_cyan}{HTML}{00A2DE}
\definecolor{tue_green}{HTML}{84D200}
\definecolor{tue_yellow}{HTML}{CEDF00}

\definecolor{accent}{gray}{0.95}

\definecolor{color1}{RGB}{0, 121, 178}
\definecolor{color2}{RGB}{255, 124, 37}
\definecolor{color3}{RGB}{37, 160, 55}
\definecolor{color4}{RGB}{220, 32, 44}
\definecolor{color5}{RGB}{147, 104, 186}
\definecolor{color6}{RGB}{143, 85, 76}
\definecolor{color7}{RGB}{230, 119, 192}
\definecolor{color8}{RGB}{127, 127, 127}
\definecolor{color9}{RGB}{192, 188, 55}
\definecolor{color10}{RGB}{0, 191, 206}

\usepackage[margin = 1in, twoside]{geometry}

\usepackage{microtype}

\usepackage[UKenglish]{babel}
\usepackage[UKenglish]{isodate}

\PassOptionsToPackage{hyphens}{url}
\usepackage[hypertexnames = false, pdftex, pdftitle = {\PDFDocTitle}, pdfauthor = {\PDFAuthor}]{hyperref}
\hypersetup{colorlinks = true, linkcolor = blue, citecolor = blue, urlcolor = blue, linktocpage}

\usepackage[capitalise]{cleveref}

\usepackage{natbib}
\setcitestyle{authoryear}
\usepackage{doi}

\usepackage{enumitem}

\usepackage{graphicx}
\usepackage{subcaption}

\usepackage{comment}

\usepackage{booktabs, tabu}

\newenvironment{absolutelynopagebreak}{}{}

\usepackage{algpseudocode}

\usepackage{multicol}

\providecommand{\keywords}[1]{\small\textbf{\textit{Keywords---}} #1}

\usepackage[T1]{fontenc}
\usepackage{bm}
\usepackage{utfsym}

\newcommand{\worstcase}{\ensuremath{\textsf{wc}}} 
\newcommand{\averagecase}{\ensuremath{\textsf{av}}} 

\newcommand{\Vn}{\ensuremath{{V_n}}}
\newcommand{\Wm}{\ensuremath{{W_m}}}

\newcommand{\dist}{\ensuremath{\text{dist}}}
\newcommand{\eps}{\ensuremath{\varepsilon}}
\newcommand{\opt}{\ensuremath{\textsf{opt}}}

\newcommand{\ord}{\ensuremath{\cO}}
\newcommand{\indicative}{\chi}

\renewcommand{\Im}{\ensuremath{\textsf{Im}}}
\newcommand{\poisson}{{\usym{1F420}}}
\newcommand{\cMpoisson}{\ensuremath{\cM^{\poisson}}}
\newcommand{\rad}{\ensuremath{\mathrm{rad}}}
\newcommand{\cen}{\ensuremath{\mathrm{cen}}}

\newcommand{\poly}{\ensuremath{\text{poly}}}
\newcommand{\SNN}{\ensuremath{\text{SNN}}}
\newcommand{\lin}{\ensuremath{\text{(lin)}}}

\newcommand{\pw}{\ensuremath{\text{(pw)}}}

\usepackage{mathtools}
\usepackage{suffix}
\usepackage{stmaryrd} %

\DeclarePairedDelimiter{\prt}{(}{)}

\DeclarePairedDelimiter{\abs}{\lvert}{\rvert}
\DeclarePairedDelimiter{\norm}{\lVert}{\rVert}
\DeclarePairedDelimiter{\inner}{\langle}{\rangle}
\DeclarePairedDelimiter{\set}{\{}{\}}

\let \oldforall \forall
\let \forall \undefined
\DeclareMathOperator{\forall}{\oldforall}

\let \oldexists \exists
\let \exists \undefined
\DeclareMathOperator{\exists}{\oldexists}

\let \oldtext \text
\renewcommand{\text}[1]{~\oldtext{#1}~}

\newcommand{\cond}{\, : \,}
\newcommand{\st}{\text{s.t.}}

\DeclareMathOperator*{\argmax}{arg \, max}
\DeclareMathOperator*{\argmin}{arg \, min}

\DeclareMathOperator*{\diag}{diag}

\usepackage{ifthen}
\newlength{\leftstackrelawd}
\newlength{\leftstackrelbwd}
\def \leftstackrel#1#2{\settowidth{\leftstackrelawd}%
  {${{}^{#1}}$} \settowidth{\leftstackrelbwd}{$#2$}%
  \addtolength{\leftstackrelawd}{- \leftstackrelbwd}%
  \leavevmode \ifthenelse{\lengthtest{\leftstackrelawd>0pt}}%
  {\kern-.5 \leftstackrelawd}{} \mathrel{\mathop{#2} \limits^{#1}}}

\DeclareMathOperator{\vspan}{span}

\newbool{isrelease}

\newcounter{review}

\newcommand{\corr}[1]{#1}
\newcommand{\new}[1]{#1}

\makeatletter

\newcommand \listreviewname{List of Reviews}
\newcommand \listofreviews{\section*{\listreviewname} \addcontentsline{toc}{section}{List of Reviews} \@starttoc{tor}}
\makeatother

\newcommand{\bN}{\ensuremath{\mathbb{N}}}

\newcommand{\bR}{\ensuremath{\mathbb{R}}}

\newcommand{\bU}{\ensuremath{\mathbb{U}}}

\newcommand{\cE}{\ensuremath{\mathcal{E}}}
\newcommand{\cF}{\ensuremath{\mathcal{F}}}

\newcommand{\cH}{\ensuremath{\mathcal{H}}}

\newcommand{\cK}{\ensuremath{\mathcal{K}}}
\newcommand{\cL}{\ensuremath{\mathcal{L}}}
\newcommand{\cM}{\ensuremath{\mathcal{M}}}

\newcommand{\cO}{\ensuremath{\mathcal{O}}}
\newcommand{\cP}{\ensuremath{\mathcal{P}}}

\newcommand{\cV}{\ensuremath{\mathcal{V}}}
\newcommand{\cW}{\ensuremath{\mathcal{W}}}

\newcommand{\rE}{\ensuremath{\mathrm{E}}}

\newcommand{\rU}{\ensuremath{\mathrm{U}}}

\newcommand{\tC}{\ensuremath{\mathtt{C}}}

\newcommand{\tG}{\ensuremath{\mathtt{G}}}

\newcommand{\tM}{\ensuremath{\mathtt{M}}}

\newcommand{\tP}{\ensuremath{\mathtt{P}}}

\newcommand{\R}{\bR}

\renewcommand{\d}{\mathrm{d}}
\newcommand{\dd}{\mathop{} \! \d}

\theoremstyle{definition}

\newtheorem{theorem}{Theorem}[section]
\newtheorem{corollary}[theorem]{Corollary}
\newtheorem{proposition}[theorem]{Proposition}
\newtheorem{lemma}[theorem]{Lemma}
\newtheorem{definition}[theorem]{Definition}
\newtheorem{problem}[theorem]{Problem}
\newtheorem{algorithm}[theorem]{Algorithm}

\newtheorem{remark}[theorem]{Remark}

\usepackage[framemethod = TikZ]{mdframed}

\renewenvironment{theorem}[1][]{
  \refstepcounter{theorem}
  \ifstrempty{#1}{
    \mdfsetup{
      frametitle = {
          \tikz[baseline = (current bounding box.east), outer sep = 0pt]
          \node[anchor = east, rectangle, fill = tue_red!20, text = black]
          {\strut Theorem~\thetheorem};
        }
    }
  }{
    \mdfsetup{
      frametitle = {
          \tikz[baseline = (current bounding box.east), outer sep = 0pt]
          \node[anchor = east, rectangle, fill = tue_red!20, text = black]
          {\strut Theorem~\thetheorem:~#1};
        }
    }
  }
  \mdfsetup{
    innertopmargin = 10pt,
    linecolor = tue_red,
    linewidth = 2pt,
    topline = true,
    leftline = false,
    rightline = false,
    bottomline = true,
    frametitleaboveskip = -1em,
  }
  \begin{absolutelynopagebreak}
    \vspace{1.0em}
    \begin{mdframed}[] \relax \vspace{-0.5em}
      }{
    \end{mdframed}
  \end{absolutelynopagebreak}
}
\crefname{theorem}{Theorem}{Theorems}

\newcounter{proposition}
\counterwithin{proposition}{section}
\renewenvironment{proposition}[1][]{
  \setcounter{proposition}{\value{theorem}}
  \refstepcounter{proposition}
  \setcounter{theorem}{\value{proposition}}
  \ifstrempty{#1}{
    \mdfsetup{
      frametitle = {
          \tikz[baseline = (current bounding box.east), outer sep = 0pt]
          \node[anchor = east, rectangle, fill = tue_dark_blue!20, text = black]
          {\strut Proposition~\thetheorem};
        }
    }
  }{
    \mdfsetup{
      frametitle = {
          \tikz[baseline = (current bounding box.east), outer sep = 0pt]
          \node[anchor = east, rectangle, fill = tue_dark_blue!20, text = black]
          {\strut Proposition~\thetheorem:~#1};
        }
    }
  }
  \mdfsetup{
    innertopmargin = 10pt,
    linecolor = tue_dark_blue,
    linewidth = 2pt,
    topline = true,
    leftline = false,
    rightline = false,
    bottomline = true,
    frametitleaboveskip = -1em,
  }
  \begin{absolutelynopagebreak}
    \vspace{1.0em}
    \begin{mdframed}[] \relax \vspace{-0.5em}
      }{
    \end{mdframed}
  \end{absolutelynopagebreak}
}
\crefname{proposition}{Proposition}{Propositions}

\newcounter{lemma}
\counterwithin{lemma}{section}
\renewenvironment{lemma}[1][]{
  \setcounter{lemma}{\value{theorem}}
  \refstepcounter{lemma}
  \setcounter{theorem}{\value{lemma}}
  \ifstrempty{#1}{
    \mdfsetup{
      frametitle = {
          \tikz[baseline = (current bounding box.east), outer sep = 0pt]
          \node[anchor = east, rectangle, fill = tue_cyan!20, text = black]
          {\strut Lemma~\thetheorem};
        }
    }
  }{
    \mdfsetup{
      frametitle = {
          \tikz[baseline = (current bounding box.east), outer sep = 0pt]
          \node[anchor = east, rectangle, fill = tue_cyan!20, text = black]
          {\strut Lemma~\thetheorem:~#1};
        }
    }
  }
  \mdfsetup{
    innertopmargin = 10pt,
    linecolor = tue_cyan,
    linewidth = 2pt,
    topline = true,
    leftline = false,
    rightline = false,
    bottomline = true,
    frametitleaboveskip = -1em,
  }
  \begin{absolutelynopagebreak}
    \vspace{1.0em}
    \begin{mdframed}[] \relax \vspace{-0.5em}
      }{
    \end{mdframed}
  \end{absolutelynopagebreak}
}
\crefname{lemma}{Lemma}{Lemmata}

\newcounter{definition}
\counterwithin{definition}{section}
\renewenvironment{definition}[1][]{
  \setcounter{definition}{\value{theorem}}
  \refstepcounter{definition}
  \setcounter{theorem}{\value{definition}}
  \ifstrempty{#1}{
    \mdfsetup{
      frametitle = {
          \tikz[baseline = (current bounding box.east), outer sep = 0pt]
          \node[anchor = east, rectangle, fill = tue_green!20, text = black]
          {\strut Definition~\thetheorem};
        }
    }
  }{
    \mdfsetup{
      frametitle = {
          \tikz[baseline = (current bounding box.east), outer sep = 0pt]
          \node[anchor = east, rectangle, fill = tue_green!20, text = black]
          {\strut Definition~\thetheorem:~#1};
        }
    }
  }
  \mdfsetup{
    innertopmargin = 10pt,
    linecolor = tue_green,
    linewidth = 2pt,
    topline = true,
    leftline = false,
    rightline = false,
    bottomline = true,
    frametitleaboveskip = -1em,
  }

  \begin{absolutelynopagebreak}
    \vspace{1.0em}
    \begin{mdframed}[] \relax \vspace{-0.5em}
      }{
    \end{mdframed}
  \end{absolutelynopagebreak}
}
\crefname{definition}{Definition}{Definitions}

\newcounter{problem}
\counterwithin{problem}{section}

\crefname{problem}{Problem}{Problems}

\newcounter{algorithm}
\counterwithin{algorithm}{section}

\crefname{algorithm}{Algorithm}{Algorithms}

\newcounter{corollary}
\counterwithin{corollary}{section}
\renewenvironment{corollary}[1][]{
  \setcounter{corollary}{\value{theorem}}
  \refstepcounter{corollary}
  \setcounter{theorem}{\value{corollary}}
  \ifstrempty{#1}{
    \mdfsetup{
      frametitle = {
          \tikz[baseline = (current bounding box.east), outer sep = 0pt]
          \node[anchor = east, rectangle, fill = tue_dark_blue!20, text = black]
          {\strut Corollary~\thetheorem};
        }
    }
  }{
    \mdfsetup{
      frametitle = {
          \tikz[baseline = (current bounding box.east), outer sep = 0pt]
          \node[anchor = east, rectangle, fill = tue_dark_blue!20, text = black]
          {\strut Corollary~\thetheorem:~#1};
        }
    }
  }
  \mdfsetup{
    innertopmargin = 10pt,
    linecolor = tue_dark_blue,
    linewidth = 2pt,
    topline = true,
    leftline = false,
    rightline = false,
    bottomline = true,
    frametitleaboveskip = -1em,
  }

  \begin{absolutelynopagebreak}
    \vspace{1.0em}
    \begin{mdframed}[] \relax \vspace{-0.5em}
      }{
    \end{mdframed}
  \end{absolutelynopagebreak}
}
\crefname{corollary}{Corollary}{Corollaries}

\usepackage{tikz}

\usetikzlibrary{backgrounds}

\usepackage{pgfplots}
\pgfplotsset{compat = 1.18}

\usepgfplotslibrary{groupplots}

\usepackage{pgfplotstable}

\booltrue{isrelease}

\definecolor{refkey}{rgb}{.8, .8, .8}
\definecolor{labelkey}{rgb}{.8, .8, .8}

\makeatletter
\newenvironment{align+}{%
  \start@align \@ne \st@rredfalse \m@ne
}{%
  \endalign
}
\makeatother

\begin{document}

    \title{
        Reduced Order Modeling and Applications to Inverse State Estimation\footnote{Chapter of the book \textit{An Introduction to Scientific Machine Learning: Mathematical Foundations and Numerical Implementation}}
    }

    \author{Olga Mula and Agustín Somacal}
    \maketitle

    \begin{abstract}
        Solving parametric partial differential equations (PDEs) repeatedly for
        many parameter values, as needed in optimal control, uncertainty
        quantification, and inverse problems, is often prohibitive with
        classical discretizations.
        This chapter introduces Reduced Order
        Modeling (ROM), which builds compressed yet accurate representations of
        parametric solution sets for fast online evaluation.
        We formulate the
        approximation of parametric PDEs as a supervised learning task and
        review linear approximation, very effective for elliptic and parabolic
        problems, before turning to nonlinear methods needed when solutions
        exhibit discontinuities or steep gradients.
        We then show how to use
        reduced-order models to efficiently recover the state of a physical
        system from limited measurements, which is an inverse problem known as state estimation.
        Throughout, we favor results with theoretical
        guarantees while also discussing practical methods, and provide a
        companion Jupyter notebook implementing the main algorithms.
    \end{abstract}

    \keywords{reduced order modeling, parametric PDEs, linear approximation,
        Kolmogorov widths, nonlinear approximation, inverse state estimation,
        PBDW method.}

    \tableofcontents

    \pagestyle{plain}

    \section{Introduction}
\label{sec:rom:intro}

Partial Differential Equations (PDEs) are very powerful mathematical tools used to model and understand countless phenomena. Although they were originally introduced to describe the laws of nature (e.g., in physics and chemistry), they soon became relevant in many other areas, such as economics or, more recently, as an analytical tool to shed light on the training and behavior of machine learning algorithms. Thanks to their explanatory power, they are often preferred over black-box approaches in many design and decision-making processes.
\new{Using PDEs to solve optimal control problems, inverse problems, or quantifying uncertainty often requires evaluating solutions across a wide range of problem-dependent parameters, rendering the process computationally expensive. Consequently, the primary challenge lies in solving these systems both accurately and efficiently.}

Solving each PDE problem with classical discretization methods is, in general, not a viable option: such solvers are very accurate and reliable, but they are also computationally very expensive when the physical model becomes complex. In this context, the field of Reduced Order Modeling (ROM) studies strategies to build inexpensive mappings that take us from the parameters to good approximations of the solution. To obtain \corr{a mapping which is fast to evaluate,} the main idea is to find a ``compressed yet accurate'' representation of the set of solutions as the parameters vary within a range relevant to the problem. This is more or less challenging depending on the nature of the PDE, and also on the final goal we are pursuing: uncertainty quantification, optimal control, or inverse state or parameter estimation.

This tutorial gives a short introduction to the topic of ROM. We assume that the reader has basic notions of PDEs (theory and numerics, in particular the Galerkin method, see e.g.~\cite{EG2013}), and of functional analysis (familiarity with Hilbert spaces, and the concept of orthogonal projection). Since the field is currently very dynamic, and there are many recent developments, we will focus on what we consider to be the main ideas and the main lines of thought to navigate the vast literature. We will systematically point to further references for the readers wishing to deepen their knowledge in each aspect. Since many statements are very hard to prove, and there are many open questions, we give priority to results that come with theoretical guarantees, but we also present developments which seem to work well in practice despite a lack of completely rigorous justification.

In \cref{sec:fwd}, we start by formulating the problem of finding ``compressed'' representations of parametric PDEs. \corr{We will see that the task can be understood as a specific type of supervised learning problem} where the notion of linear and nonlinear approximation plays an important role in the categorization of existing algorithms. We will see in \cref{sec:fwd:linear} that a lot is known for linear approximations, and in particular that they work very well for elliptic and parabolic PDEs.
\new{However, for other types of PDEs, nonlinear approximation strategies are often needed, especially in cases where solutions present spatial discontinuities or steep gradients. We will see that recent advances leverage mostly two ingredients: i) machine learning tools as a means for devising nonlinear parametric approximations, and ii) transformations inspired by physical insights that allow to fall back to linear model reduction methods.}
In this respect, significant theoretical gaps remain, driving an intense research activity on the topic. We briefly discuss some strategies in \cref{sec:fwd:nonlinear}. We next move on to \cref{sec:inverse} where we explain how to leverage the concepts from \cref{sec:fwd} to solve inverse problems. Such problems arise when we observe a real physical system through partial observations, and we want to build an approximation of the whole system assuming that a certain PDE model is a good description of the underlying physics. In this setting, the nature of the PDE is known, but the parameters explaining at best the observations are not known. We thus combine the knowledge of real data observations with the PDE model to find an optimal approximation of the system. Like before, we divide the discussion into linear and nonlinear approximation.

This tutorial comes with a companion jupyter notebook, which can be found on the following link:
\begin{center}
    \url{https://github.com/agussomacal/MLbook}
\end{center}

\section{Reduced Order Modeling of Parametric PDEs}
\label{sec:fwd}
Parametric PDEs are written in abstract form as
\begin{equation*}
    \cP(u,\theta)=0,
    \label{eq:genpar}
\end{equation*}
where $\cP$ is a partial differential operator, and $\theta=(\theta_1,\dots,\theta_p)$ is a vector of
scalar parameters ranging in some domain $\Theta\subset \bR^p$. We assume well-posedness, that is, for every $\theta\in \Theta$  the problem admits a unique solution $u=u(\theta)$ which lives in a Hilbert space $V$. Typical spaces for $V$ would be $L^2(\Omega)$ or $H^1(\Omega)$ where $\Omega$ is a spatial domain\footnote{Although the domain $\Omega$ often refers to spatial variables it is not necessarily restricted to that meaning as it may also represent other quantities such as time or momentum.} of $\bR^d$. Our notation thus means that $u(\theta):\Omega\to \bR$ is a function of a spatial variable $x\in \Omega$. In what follows, the norm in $V$ and the scalar product are respectively denoted by $\norm{\cdot}_V$ and $\inner{\cdot,\cdot}_V$. The Euclidean norm and scalar product in $\bR^d$ will be denoted as $\abs{\cdot}$ and $\prt{\cdot,\cdot}$.

Our object of interest is the parameter-to-solution mapping
\begin{align*}
    u:\Theta & \to V             \\
    \theta   & \mapsto u(\theta)
    \label{eq:solmap}
\end{align*}
Its image
\begin{equation}
    \cM\coloneqq u(\Theta)=\{u(\theta) \, : \, \theta\in \Theta\}\subset V
    \label{eq:manifold}
\end{equation}
is the set of all PDE solutions when the parameters $\theta$ take values in $\Theta$. Throughout this document, we assume that $\Theta$ is compact in $\bR^p$ and that the map $u$ is continuous and injective (if $u(\theta_1)=u(\theta_2)$, then $\theta_1=\theta_2$). Therefore $\cM$ is a compact set of $V$. In the literature, $\cM$ is often referred to as the \emph{solution manifold}, since it may be thought of as a parameterized $p$-dimensional set immersed in the Hilbert space $V$. Note however that \corr{calling $\cM$ a manifold is not always technically correct since the parameter-to-solution mapping may not satisfy all the axioms of topological manifolds (e.g.~continuity may fail).}

As a guiding example, we consider the PDE operator $\cP$ defined by the Poisson equation \corr{on a bounded domain $\Omega$,}
\begin{align}
    \label{eq:poisson}
    -\nabla\cdot(a\nabla u) & = f, \quad \text{ in }\Omega,         \\
    u                       & =0, \quad \text{ on } \partial\Omega,
\end{align}
where $f:\Omega\to \bR$, and
\begin{align*}
    a = a(\theta) = \sum_{i=1}^p \theta_i \psi_i,\quad \text{with }\theta=(\theta_i)_{i=1}^p,
\end{align*}
where $\psi_i:\Omega\to \bR$ are given functions for $i\in\{1,\dots,p\}$. This notation implies that $a(\theta)$ is a function that depends on the spatial variable $x$ (just in the same manner as $u(\theta)$). The divergence and the gradient in \eqref{eq:poisson} are taken with respect to $x$ (and not $\theta$). \new{We refer to \cref{fig:poisson} for an example solution. The setting explained in the figure is the one which we propose to experiment with in the corresponding jupyter notebook.}

\begin{figure}[ht]
    \centering
    \includegraphics[width=0.65\linewidth]{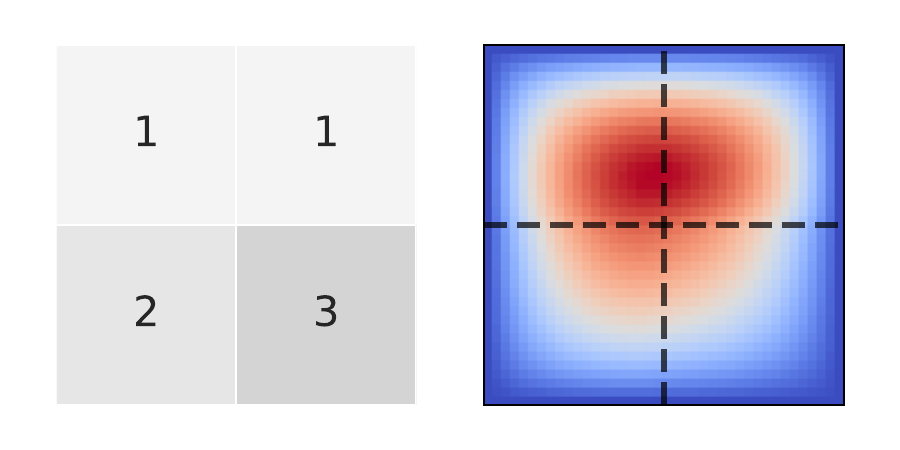}
    \caption{Illustration of a solution of \eqref{eq:poisson} with $\Omega=(0,1)^2$ and $p=4$. In the example, we build $a(\theta)$ based on a partition of $\Omega$ into $4$ smaller squares $\Omega_1=(0, 0.5)\times(0, 0.5]$, $\Omega_2=(0.5, 1.)\times (0, 0.5]$, etc. In the left figure, we see the value of each $\theta_i$ in association with each subdomain $\Omega_i$ so that $a(\theta) = 2\indicative_{\Omega_1}+3\indicative_{\Omega_2}+\indicative_{\Omega_3}+\indicative_{\Omega_4}$ with $\theta=(2,3,1,1)$, and for each $i=1,\dots,4$ we have chosen $\psi_i=\indicative_{\Omega_i}$ which is the indicator function in the corresponding subdomain $\Omega_i$. The corresponding solution $u(\theta)$ is given on the right. In the jupyter notebook, we generate the manifold $\cM$ by making $\theta$ vary in a certain compact set $\Theta \subset \bR^4$, and we propose numerical experiments to understand the model reduction strategies that we present in this tutorial.
    }
    \label{fig:poisson}
\end{figure}

\corr{To proceed, we need to derive a variational formulation of \cref{eq:poisson}. This requires working in the ambient space
    $$
    V = H^1_0(\Omega) \coloneqq \set{v\in H^1(\Omega) \cond v\vert_{\partial \Omega}=0},
    $$
    which is a Hilbert space when equipped with the scalar product, and induced norm
    $$
    \inner{w, v}_{H^1_0(\Omega)} \coloneqq \int_{\Omega} w(x)v(x)\d x + \int_{\Omega} \nabla w(x)\cdot \nabla v(x)\d x, \quad \Vert v \Vert_{H^1_0(\Omega)}\coloneqq \left< v, v\right>_{H^1_0(\Omega)}^{1/2}, \quad \forall w, v \in H_0^1(\Omega).
    $$
}
The variational formulation reads:
\begin{equation}
    \label{eq:vf}
    \text{Find }u\in V=H^1_0(\Omega) \;\st\; a(u,v)=F(v), \quad \forall v\in V,
\end{equation}
where
\begin{equation*}
    \begin{cases}
        a(u, v) & = \sum_{i=1}^p \theta_i \int_\Omega \psi_i \nabla u \cdot \nabla v,
        \quad \forall (u,v)\in V\times V,                                             \\
        F(v)    & =\int_\Omega fv, \quad \forall v\in V.
    \end{cases}
\end{equation*}
Assuming that $f\in L^2(\Omega)$, \corr{$a(\theta)\in L^\infty(\Omega)$,} and that there exists $0<r\leq R$ such that
\begin{equation}
    \label{eq:UEA}
    0<r\leq a(\theta)(x) \leq R, \quad \forall x\in \Omega,
\end{equation}
then the Lax-Milgram theorem guarantees that, for every $\theta\in \Theta$, problem \eqref{eq:vf} has a unique solution $u(\theta) \in V = H^1_0(\Omega)$. For clarity of notation, when we specifically refer to the solution set of our Poisson example, we will use the notation $\cMpoisson$.

A very common way of finding a numerical approximation of $u(\theta)$ is the Galerkin method: we replace the infinite dimensional space $V$ in \eqref{eq:vf} by a finite dimensional subspace $V_h\subset V$, and consider the variational problem
\begin{equation}
    \label{eq:vf-galerkin}
    \text{Find }u_h\in V_h \st \quad a(u_h,v_h)=F(v_h), \quad \forall v_h\in V_h.
\end{equation}
When $V_h$ is chosen as a subspace of piecewise polynomials \new{defined on an underlying spatial grid}, this method is called the Finite Element Method (FEM). The parameter $h$ refers to the size of the mesh that is used, and we denote $N=\dim V_h$ the dimension of $V_h$.

\subsection{Do we really need ROM?}
\label{sec:need-for-rom}
We are now going to recall a few key results from finite element analysis \corr{whose aim is to motivate why the dimension $N$ of $V_h$ is expected to be typically large in the following three situations:
    \begin{enumerate}
        \item when one requires a high degree of accuracy in the solution,
        \item when the space dimension is large ($d\gg 1$),
        \item when there are local spatial discontinuities.
    \end{enumerate}
    The complexity of solving the Galerkin formulation \eqref{eq:vf-galerkin} grows at least as $\ord(N)$ (in general, it will be even worse, of the order $\ord(N^\alpha)$ for some $\alpha>1$). So if $N\gg 1$, and we need to solve the PDE for many parameter instances, the expected complexity becomes very large, and this motivates the need to build another type of algorithm that maps parameters to solutions in a more affordable manner, which is precisely the main goal in ROM .
}

To start discussing why $N$ is expected to be large in the above three situations, let us first recall Céa's lemma. It connects the approximation error given by the Galerkin method with the best approximation in $V_h$ through the following bound.

\begin{lemma}[Céa's Lemma]
    \new{For the Poisson problem from \eqref{eq:vf},} the accuracy of the Galerkin approximation \eqref{eq:vf-galerkin} is bounded by
    \begin{equation*}
        \norm{u-u_h}_{H^1_0(\Omega)}\leq \corr{\prt*{\frac{2R}{r\min(1, C_\Omega^{-2})}}^{1/2}} \min_{v_h\in V_h} \norm{u - v_h}_{H^1_0(\Omega)},
    \end{equation*}
    \corr{where $C_\Omega>0$ is the Poincaré constant associated to $\Omega$.}
\end{lemma}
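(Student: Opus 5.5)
The square root in the constant suggests working in the energy norm induced by the bilinear form rather than using the usual Lax--Milgram continuity/coercivity chain (which would give a constant without the square root). Fix $\theta\in\Theta$ and write $a(\cdot,\cdot)$ for the bilinear form in \eqref{eq:vf}. Since $a(\theta)=\sum_i\theta_i\psi_i$ is a scalar function, we have $a(w,v)=\int_\Omega a(\theta)\nabla w\cdot\nabla v$. This form is symmetric, and by \eqref{eq:UEA} it is nonnegative, so $\norm{v}_a\coloneqq a(v,v)^{1/2}$ defines a norm on $V$ (definiteness follows from the coercivity shown below). The plan has three steps:
\begin{itemize}
\item show that the Galerkin solution is the $\norm{\cdot}_a$-best approximation in $V_h$;
\item compare $\norm{\cdot}_a$ with $\norm{\cdot}_{H^1_0(\Omega)}$ from above and below;
\item chain the resulting inequalities.
\end{itemize}

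\emph{Step 1 (Galerkin orthogonality and best approximation).} Subtracting \eqref{eq:vf-galerkin} from \eqref{eq:vf} tested with $v_h\in V_h\subset V$ gives $a(u-u_h,v_h)=0$ for all $v_h\in V_h$. Take any $v_h\in V_h$ and write $u-v_h=(u-u_h)+(u_h-v_h)$, where $u_h-v_h\in V_h$. Expanding with symmetry, the cross term vanishes by orthogonality, so
\[
\norm{u-v_h}_a^2=\norm{u-u_h}_a^2+\norm{u_h-v_h}_a^2\ge\norm{u-u_h}_a^2 .
\]

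\emph{Step 2 (norm equivalence).} For the upper bound, \eqref{eq:UEA} gives $a(v,v)\le R\norm{\nabla v}_{L^2}^2\le R\norm{v}_{H^1_0(\Omega)}^2$. For the lower bound, \eqref{eq:UEA} gives $a(v,v)\ge r\norm{\nabla v}_{L^2}^2$. I then split $\norm{\nabla v}_{L^2}^2$ into two halves and apply the Poincaré inequality $\norm{v}_{L^2}\le C_\Omega\norm{\nabla v}_{L^2}$ (valid on $H^1_0(\Omega)$) to one half:
\[
\norm{\nabla v}_{L^2}^2\ge \tfrac12 C_\Omega^{-2}\norm{v}_{L^2}^2+\tfrac12\norm{\nabla v}_{L^2}^2\ge \tfrac12\min(1,C_\Omega^{-2})\norm{v}_{H^1_0(\Omega)}^2 .
\]
Hence $a(v,v)\ge \tfrac{r}{2}\min(1,C_\Omega^{-2})\norm{v}_{H^1_0(\Omega)}^2$. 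This half-splitting is the only step needing a small choice, and it is exactly what produces the factor $2$ and the $\min(1,C_\Omega^{-2})$ in the stated constant.

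\emph{Step 3 (conclusion).} Chaining the three inequalities, for any $v_h\in V_h$,
\[
\norm{u-u_h}_{H^1_0(\Omega)}^2\le\frac{2}{r\min(1,C_\Omega^{-2})}\norm{u-u_h}_a^2\le\frac{2}{r\min(1,C_\Omega^{-2})}\norm{u-v_h}_a^2\le\frac{2R}{r\min(1,C_\Omega^{-2})}\norm{u-v_h}_{H^1_0(\Omega)}^2 .
\]
Taking the infimum over $v_h\in V_h$ and then square roots gives the claim. The minimum is attained because $V_h$ is finite dimensional.

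I do not expect a genuine obstacle. The only care needed is to use symmetry of $a$, which holds here, to obtain the square-root constant, and to pick the Poincaré splitting above that matches the paper's constant.
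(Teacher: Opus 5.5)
Your proof is correct and complete, and it gives exactly the constant in the statement.

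The paper itself does not prove this lemma. It quotes it as a classical result and moves straight on to \cref{thm:conv-fem}, so there is no in-paper argument to compare against.

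Your route is the right one for this particular constant. The textbook Céa argument uses only continuity $M$ and coercivity $\alpha$, without symmetry. It yields the bound $\norm{u-u_h}_{H^1_0(\Omega)}\le (M/\alpha)\min_{v_h\in V_h}\norm{u-v_h}_{H^1_0(\Omega)}$. Here that means the constant $2R/(r\min(1,C_\Omega^{-2}))$ with no square root, which is weaker because this quantity is at least $1$. The square root comes precisely from your Step~1: symmetry of $a(\cdot,\cdot)$ makes $u_h$ the best approximation in the energy norm.

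Your half-splitting in Step~2 is what reproduces the paper's stated constant, but it is not sharp. The direct bound $\norm{v}_{H^1_0(\Omega)}^2\le (1+C_\Omega^2)\norm{\nabla v}_{L^2}^2$ gives the slightly better constant $\prt{R(1+C_\Omega^2)/r}^{1/2}$. This is indeed better, because $1+C_\Omega^2\le 2\max(1,C_\Omega^2)=2/\min(1,C_\Omega^{-2})$.
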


By a nontrivial development, we can leverage this bound to connect the size of the mesh with the accuracy of the solution if we have some knowledge on the regularity of $u$.
There are plenty of results of this flavor (see for example \cite{babuskaRatesConvergenceFinite1982,suli2012lecture}), and we record one of them in the next theorem (see \cref{eq:conv-fem-rate}).

\begin{theorem}[Convergence of FEM]
    \label{thm:conv-fem}
    Let $u\in H^1_0(\Omega)$ and $u_h\in V_{h}$ be the solutions of \eqref{eq:vf-galerkin}. Then, the finite element method involving piece-wise polynomials of degree $m\geq 1$ converges, that is,
    \begin{equation}
        \label{eq:conv-fem}
        \lim_{h\to 0} \Vert u - u_h\Vert_{H^1_0(\Omega)}=0.
    \end{equation}
    Moreover, if $u\in H^{m+1}(\Omega)$, then there exists a constant $C>0$ independent of $h$ such that
    \begin{equation}
        \label{eq:conv-fem-rate}
        \Vert u - u_h\Vert_{H^1_0(\Omega)} \leq C h^m \vert u \vert_{H^{m+1}(\Omega)},
    \end{equation}
    where
    $$
    \vert u \vert_{H^{m+1}(\Omega)}
    \coloneqq
    \prt*{\sum_{|k|=m+1} \norm{\partial^k v }^2_{L^2(\Omega)} }^{1/2}
    $$
    is the $H^{m+1}(\Omega)$ semi-norm of $u$.
\end{theorem}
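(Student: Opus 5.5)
The plan is to reduce both claims to best-approximation estimates through Céa's lemma stated just above. That lemma gives
\[
\norm{u-u_h}_{H^1_0(\Omega)}\leq C_{\text{Céa}} \min_{v_h\in V_h}\norm{u-v_h}_{H^1_0(\Omega)},
\qquad
C_{\text{Céa}}=\prt*{\frac{2R}{r\min(1,C_\Omega^{-2})}}^{1/2},
\]
so it suffices to control how well $V_h$ approximates $u$. We work with a shape-regular (quasi-uniform) family of meshes $\mathcal{T}_h$ of a polygonal $\Omega$, with $V_h$ the conforming Lagrange space of degree $m$ and zero boundary values. The natural candidate for $v_h$ is the nodal Lagrange interpolant $I_h u$, which is well defined when $u$ is continuous. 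This holds when $u\in H^{m+1}(\Omega)$ with $m+1>d/2$; otherwise we replace $I_h$ by a quasi-interpolant of Scott--Zhang or Clément type that preserves homogeneous boundary conditions.

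For the rate \eqref{eq:conv-fem-rate} we work element by element, with four steps.
\begin{enumerate}
\item Map each $K\in\mathcal{T}_h$ affinely to a reference element $\hat K$ via $x=B_K\hat x+b_K$.
\item On $\hat K$, the map $\hat v\mapsto \hat v-\hat I\hat v$ is bounded from $H^{m+1}(\hat K)$ to $H^1(\hat K)$ and vanishes on $\mathbb{P}_m$. The Bramble--Hilbert (Deny--Lions) lemma then gives $\norm{\hat v-\hat I\hat v}_{H^1(\hat K)}\leq \hat C\,\abs{\hat v}_{H^{m+1}(\hat K)}$.
\item Scale back using $\norm{B_K}\lesssim h_K$, $\norm{B_K^{-1}}\lesssim \rho_K^{-1}$ and $\abs{\det B_K}$. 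Shape regularity $h_K/\rho_K\leq\sigma$ yields $\abs{u-I_hu}_{H^1(K)}\leq C h_K^m\abs{u}_{H^{m+1}(K)}$ and $\norm{u-I_hu}_{L^2(K)}\leq Ch_K^{m+1}\abs{u}_{H^{m+1}(K)}$.
\item Square and sum over $K$, using $h_K\le h\le 1$, to get $\norm{u-I_hu}_{H^1_0}\leq Ch^m\abs{u}_{H^{m+1}(\Omega)}$.
\end{enumerate}
Combining with Céa's lemma gives \eqref{eq:conv-fem-rate}, with $C$ depending on $m$, $\sigma$, $r$, $R$ and $\Omega$ but not on $h$.

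For the plain convergence \eqref{eq:conv-fem} we use density rather than regularity, since $u$ is only in $H^1_0$. Given $\eps>0$, pick $w\in C_c^\infty(\Omega)$ with $\norm{u-w}_{H^1_0}<\eps$. Then
\[
\min_{v_h\in V_h}\norm{u-v_h}_{H^1_0}\le \norm{u-w}_{H^1_0}+\norm{w-I_hw}_{H^1_0}\le \eps + Ch^m\abs{w}_{H^{m+1}(\Omega)}.
\]
Here $I_hw\in V_h$ because $w$ vanishes near $\partial\Omega$. Letting $h\to0$ and then $\eps\to0$ and applying Céa's lemma gives \eqref{eq:conv-fem}.

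The main obstacle is step 2, the reference-element estimate: it requires the Bramble--Hilbert lemma (a compactness argument via Rellich) together with careful bookkeeping of the scaling constants. A secondary issue is that the statement leaves the hypotheses on the mesh family and $\Omega$ implicit. We would state these assumptions explicitly (shape regularity, polygonal domain, $m+1>d/2$ or a quasi-interpolant) and otherwise cite \cite{suli2012lecture,EG2013} for the standard interpolation theory.
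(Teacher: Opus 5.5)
Your proposal is correct and follows the route the paper has in mind. The paper gives no proof of its own: it only remarks that the result follows from C\'ea's lemma by a ``nontrivial development'' and defers to the standard finite element literature. That development is exactly what you supply: C\'ea's lemma combined with Bramble--Hilbert/scaling interpolation estimates for the rate, plus a density argument in $H^1_0(\Omega)$ for plain convergence. You also state explicitly the hypotheses the paper leaves implicit (conforming Lagrange elements on a shape-regular family of meshes of a polygonal domain, and either $m+1>d/2$ or a Scott--Zhang-type quasi-interpolant), which is what is needed to make the statement precise.
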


Equation \eqref{eq:conv-fem} from Theorem \ref{thm:conv-fem} guarantees convergence for any degree of the finite element. Equation \eqref{eq:conv-fem-rate} gives the rate of convergence, and reveals that the rate depends on the size $h$ of the mesh, and on the spatial regularity of the solution. We gain one order of convergence per polynomial degree. If the solution is not regular enough ($u\not\in H^{m+1}(\Omega)$), then the results \new{from \cref{thm:conv-fem}} show that there is no theoretical advantage to use finite elements of degree $m$ instead of elements of lower degree. \corr{However, a finer analysis may lead to a more nuanced landscape depending on the situation since the presence of local spatial regularity may justify the use of higher degree in subdomains where the solution is regular.}

Using finite elements of high degree comes at the price of having to deal with more unknowns \new{and denser systems of linear equations to invert}. Let us discuss the concrete implications when we work with a \textbf{quasi-uniform triangulation} of a domain $\Omega\subset \bR^d$. In this case, the dimension $N=\dim(V_h)$ is of the same order of the number of cells, that is
$$
N \sim h^{-d}.
$$
We can thus reformulate the estimate \eqref{eq:conv-fem-rate} as a compromise between accuracy and the number $N$ of degrees of freedom:
\begin{equation}
    \label{eq:fem-conv-dof}
    \Vert u - u_h\Vert_{H^1_0(\Omega)} \leq C \vert u \vert_{H^{m+1}(\Omega)} N^{-\frac m d}.
\end{equation}
This means, concretely, that to reach a given accuracy $\eps$, the number $N(\eps)$ of degrees of freedom needed is of order
$$
N(\eps)\sim \left(\frac{C \vert u \vert_{H^m(\Omega)}}{\eps}\right)^{d/m}.
$$
It grows exponentially with the dimension, and this fact is called the \emph{curse of dimensionality}. We see that smoothness has the opposite effect: the rate improves with regularity which is sometimes called the \emph{blessing of smoothness}.

To understand the drastic implications of this curse of dimensionality, it is good to study an example. Suppose we fix the smoothness $m$, and suppose that we currently have an approximation $u_h$ with $N(\eps)= \eps^{-d/m}$ degrees of freedom giving an error of $\eps$. Suppose now that we want to improve the quality of our approximation, i.e.\ reduce the error $\eps$,  by a factor of $2$. This is a fairly modest improvement, but the expression $\eps= N^{-m/d}$  tells us that we need $N'=N(\eps/2)$ degrees of freedom with
\[
    (N')^{-m/d} = \frac12 \eps = \frac12 N^{-m/d}
    \qquad\text{or equivalently} \qquad
    \frac {N'}N = 2^{d/m}.
\]
If we are working in dimension $2$ and $m=1$ then we need to take $4$ times as many points. If $\Omega$ has $10$ dimensions, then we need to take $ 2^{10} = 1024$  as many points, which typically already is prohibitive. When $d=100$, $2^d \approx 10^{30}$, and when $d=1000$, $2^d\approx 10^{301}$. This is more than the cube of the amount of estimated atoms in the universe ($10^{80}$). The number $d=1000$ may seem artificial given the low dimensional examples that we consider in our programming tutorial. However, PDEs in very high dimension arise in applications such as molecular dynamics where a simulation of $k$ particles in spatial dimension $D$ yields PDEs with effective dimension $d=k^D$.

The curse of dimensionality is mitigated when we have smoothness. However, the higher the dimension, the more smoothness we need to require, and at some point this is no longer a realistic assumption. In addition, even for small dimensions, we may not have a lot of regularity. As a consequence, $\vert u \vert_{H^{m+1}(\Omega)}$ in \eqref{eq:fem-conv-dof} becomes very large or even infinite even for small $m$. This happens when the solution has local singularities, which typically occurs when the right-hand side $f$ has singularities, when the PDE coefficients have discontinuities or when the boundary conditions are of Dirichlet type in one part, and of Neumann type in another part. In such situations, non-uniform meshes refined near singularities become essential. These discretizations are typically constructed using a posteriori error estimators.

\corr{More broadly, one may consider approximation strategies beyond finite elements. However, regardless of the particular method, the same challenge remains: the number $N$ of degrees of freedom can become extremely large in high-dimensional settings, in problems with limited regularity, or when very high accuracy is required. Consequently, when solutions must be computed repeatedly for many parameter values, it is natural to seek fast surrogate models that avoid reconstructing such costly discretizations for every new query. This is precisely the main motivation behind reduced-order modeling (ROM).}

\corr{Although this discussion could be developed further, we stop here to keep the focus on ROM. We now return to the main thread, hoping that the above considerations have made clear why the number of degrees of freedom $N$ should be thought as a large number which leads to challenges in a parametric setting.}

\subsection{Linear Reduced Order Modeling}
\label{sec:fwd:linear}
In general, for every $\theta\in \Theta$, one cannot exactly compute $u(\theta)$ unless the solution to the PDE has an analytic form. As explained just above, to approximate $u(\theta)$ we may rely on classical discretizations such as finite element methods which would compute an approximate solution in a finite element space $V_h\subset V$ of dimension $N\gg 1$. The goal of ROM is to find an approximation space $V_n$ of dimension $n$ that approximates $u(\theta)$ at a comparable accuracy than the one of $V_h$ but with a big dimension reduction $n\ll N$. The space $V_n$ could either be an $n$-dimensional linear subspace of $V$, or more generally a nonlinear set parametrized by $n$ degrees of freedom. As we will see in the following, sometimes $V_n$ is actually built as a subspace of a finite element space $V_h$.

Methods of approximation are essentially divided into two classes: linear and nonlinear. In linear approximation, $V_n$ is a linear subspace of $V$, and we want to choose it so that it approximates well all the elements $u\in \cM$. Once such a space is found, we can then build a numerical strategy
\begin{equation*}
    \label{eq:widehat-u}
    \widehat u_n^{\lin}:\Theta \to V_n
\end{equation*}
such that $\widehat u_n^{\lin}(\theta)$ approximates $u(\theta)$, and such that the numerical complexity to evaluate the mapping $\widehat u_n^{\lin}$ is of order $\ord(n^\alpha)$ instead of $\ord(N^\alpha)$. For example, to solve problem \eqref{eq:poisson} one could use the Galerkin method corresponding to $V_n$.

Let us examine how much accuracy we can expect when working with linear subspaces $V_n$. For each $\theta\in \Theta$, the best approximation of $u(\theta)$ in $V_n$ is
\begin{equation*}
    e(u(\theta), V_n) \coloneqq \inf_{v\in V_n} \norm{u(\theta)-v}_V,
\end{equation*}
which, for $V_n$ a finite dimensional subspace, it can be written as
\begin{equation*}
    e(u(\theta), V_n) = \norm{u(\theta)-P_{V_n}u(\theta)}_V.
\end{equation*}
where $P_{V_n}:V\to V_n$ is the orthogonal projection operator onto $V_n$. The quality of $V_n$ for our parametric problem is either given by
\begin{equation}
    \label{eq:wc-lin-rom}
    \cE^{\worstcase}(\cM; V_n) \coloneqq \max_{\theta \in \Theta} e(u(\theta), V_n)
\end{equation}
or by
\begin{equation}
    \label{eq:ac-lin-rom}
    \cE^{\averagecase}_\rho(\cM; V_n) \coloneqq \left(\int_{\theta \in \Theta} e^2(u(\theta), V_n) \rho(\dd \theta) \right)^{1/2}.
\end{equation}
We refer to these quantities as the error on the class $\cM$. The first quantity \eqref{eq:wc-lin-rom} measures accuracy in the worst case sense. The second one is based on the average accuracy over all elements when we place ourselves in a probabilitic setting in which we assume that the parameters follow a certain distribution $\rho\in \cP(\Theta)$. We have that
$$
\cE^{\averagecase}_\rho(\cM; V_n) \leq \cE^{\worstcase}(\cM; V_n), \quad \forall \rho \in \cP(\Theta).
$$

The best choice of a linear space $V_n$ is the one that minimizes the error \cref{eq:wc-lin-rom} or \cref{eq:ac-lin-rom}. This smallest error is called the Kolmogorov $n$-width of $\cM$, and it is defined as (see \cite[Chapter 3]{BCOW2017})
\begin{equation*}
    \d_n^{\star}(\cM) \coloneqq \inf_{\substack{W \text{linear}\\ \dim(W)\leq n}} \cE^{\star}(\cM; W) , \quad \text{ with } \star = \{\averagecase, \worstcase\}.
\end{equation*}

The Kolmogorov $n$-width gives the optimal performance that we can achieve when working with a linear approximation space of dimension $n$ to approximate the elements of $\cM$. Determining the value of $d^\star_n(\cM)$ for a given solution set $\cM$, and finding an optimal or near-optimal subspace is a very difficult problem. A salient example where a rate for $d^\worstcase_n(\cM)$ is known is our example on the parametric Poisson problem.

\begin{theorem}[see, e.g., \cite{TWZ2017}]
    \label{thm:kolmo-width-poisson}
    For the parametrized Poisson problem \eqref{eq:vf}, if the uniform ellipticity assumption \eqref{eq:UEA} holds, then
    $$
    d_n^\worstcase\prt*{\cMpoisson} \leq C e^{-c n^{1/d}},
    $$
    where $C,\, c>0$.
\end{theorem}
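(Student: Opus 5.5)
The plan is to bound $d_n^\worstcase(\cMpoisson)$ from above by exhibiting one explicit linear space $V_n$: the span of the coefficients of a polynomial approximation in the parameter $\theta$. I read the exponent $n^{1/d}$ as referring to the number of parameters, so below $d=p$. The argument has three steps:

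\begin{enumerate}
    \item show that $\theta\mapsto u(\theta)$ extends holomorphically to a complex neighbourhood of $\Theta$;
    \item expand $u$ in tensorized Chebyshev polynomials in $\theta$ and obtain geometric decay of the coefficients;
    \item count the retained terms to convert decay in polynomial degree into decay in the dimension $n$.
\end{enumerate}

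\textbf{Step 1 (holomorphy).} I assume $\Theta\subset\prod_{i=1}^p[\alpha_i,\beta_i]$ and that \eqref{eq:UEA} holds on this box. If it only holds on $\Theta$, I would replace the box by a finite union of small boxes around points of $\Theta$, using compactness. For $z\in\mathbb{C}^p$ I complexify the form,
$$
a_z(u,v)=\sum_{i=1}^p z_i\int_\Omega\psi_i\nabla u\cdot\nabla\bar v .
$$
Fix $\delta>0$ with $\delta\sum_i\norm{\psi_i}_{L^\infty}\le r/2$. Whenever $\mathrm{dist}_\infty(z,\text{box})<\delta$, we get
$$
\mathrm{Re}\, a(z)(x)\ \ge\ r-\sum_i|\mathrm{Im}\,z_i|\,\abs{\psi_i(x)}\ \ge\ r/2 .
$$
The complex Lax--Milgram theorem then gives a unique solution $u(z)\in H^1_0(\Omega;\mathbb{C})$. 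Its norm is bounded uniformly by $M:=2C\norm{f}_{L^2}/r$, using Poincaré as in Céa's Lemma.

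Holomorphy in each $z_i$ follows because the operator $A(z)=\sum_i z_iA_i$ is affine in $z$. The solution $u(z)=A(z)^{-1}F$ is therefore a composition of an affine map with operator inversion, which is analytic on the invertible operators. Concretely, a Neumann-series expansion around any point of the region gives local power series.

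\textbf{Step 2 (Chebyshev coefficients).} Rescale each $[\alpha_i,\beta_i]$ to $[-1,1]$. The $\delta$-neighbourhood then contains a product of Bernstein ellipses $\mathcal{E}_\rho^p$ for some $\rho>1$ depending only on $\delta$ and the box. Write
$$
u(\theta)=\sum_{\nu\in\bN^p}u_\nu T_\nu(\theta),\qquad T_\nu=\prod_i T_{\nu_i}(\theta_i),\qquad u_\nu\in V .
$$
The coefficients $u_\nu$ are $V$-valued Chebyshev coefficients. Cauchy's integral formula on the product of ellipses, applied one variable at a time, yields
$$
\norm{u_\nu}_V\le 2^{p}M\rho^{-|\nu|_1}.
$$
This step is the main technical point. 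Beyond it, one must check that Bochner-valued holomorphy permits the contour argument, and that the truncated series converges in $V$ uniformly in $\theta$; this works because $\abs{T_k}\le1$ on $[-1,1]$.

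\textbf{Step 3 (counting).} Take $V_n=\vspan\{u_\nu:\ |\nu|_\infty\le k\}$, so that $n\le(k+1)^p$. Then for every $\theta\in\Theta$,
$$
e(u(\theta),V_n)\le\sum_{|\nu|_\infty>k}\norm{u_\nu}_V\le 2^pM\sum_{|\nu|_1>k}\rho^{-|\nu|_1}\le C_p\,k^{p-1}\rho^{-k}.
$$
Choosing $k=\lfloor n^{1/p}\rfloor-1$ gives $d_n^\worstcase(\cMpoisson)\le C\,n^{(p-1)/p}\rho^{-n^{1/p}}$. The polynomial prefactor is absorbed by slightly decreasing the constant in the exponent, giving $Ce^{-cn^{1/p}}$ with $c<\log\rho$. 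This holds for $n\ge 2^p$; smaller $n$ are covered by enlarging $C$, since $d_n^\worstcase\le\max_\theta\norm{u(\theta)}_V\le M$.
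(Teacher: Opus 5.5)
Your proof is correct. The paper gives no argument for this statement; it only points to \cite{TWZ2017}. The argument behind that reference follows the same strategy as yours: extend $\theta\mapsto u(\theta)$ holomorphically, deduce geometric decay of the $V$-valued polynomial coefficients, and take as $V_n$ the span of the retained coefficients, so that the width is bounded by a polynomial truncation error that is uniform in $\theta$. The two differ only in the polynomial system and the index set. \cite{TWZ2017} uses Taylor and Legendre expansions on quasi-optimal index sets dictated by sharp coefficient bounds, and counts lattice points via Lebesgue measure. That is what a sharp constant in the exponent requires: a total-degree set $\{\abs{\nu}_1\le k\}$ already has only about $k^p/p!$ elements for a tail of the same size $\rho^{-k}$, which improves your $c\approx\log\rho$ by a factor of order $(p!)^{1/p}$. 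Your tensor-product Chebyshev truncation is cruder but simpler, and fully sufficient for the qualitative statement. Your reading of the exponent as $n^{1/p}$, with $p$ the number of parameters, is the intended one: the rate in the cited result is in terms of the parameter dimension, and the spatial dimension plays no role in the argument.

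Three details need attention.
\begin{itemize}
\item In Step 1 the $\psi_i$ are real, so $\mathrm{Re}\,a(z)=a(\mathrm{Re}\,z)$ does not depend on $\mathrm{Im}\,z$. With $Q=\prod_i[\alpha_i,\beta_i]$, the correct estimate is $\mathrm{Re}\,a(z)(x)\ge r-\mathrm{dist}_\infty(\mathrm{Re}\,z,Q)\sum_i\norm{\psi_i}_{L^\infty}\ge r/2$. Your display, with $\abs{\mathrm{Im}\,z_i}$ on the right, can fail for real $z$ just outside $Q$. It is this margin in the real direction that makes room for the Bernstein ellipses.
\item If \eqref{eq:UEA} holds only on $\Theta$, your covering by $K$ small boxes must be carried into Step 3. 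Take $V_n$ to be the sum of the $K$ tensor spans; its dimension is at most $K(k+1)^p$, which replaces $n$ by $n/K$ and is absorbed into $c$.
\item The union bound $\sum_{\abs{\nu}_\infty>k}\rho^{-\abs{\nu}_1}\le p\,\rho^{-(k+1)}(1-\rho^{-1})^{-p}$ removes the polynomial prefactor altogether.
\end{itemize}
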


\begin{remark}
    \corr{Beyond our elliptic problem, there are a few other parametric PDE manifolds for which the decay of the Kolomogorov width has been estimated. We refer to \cite{AGU2025} for the study of manifolds  $\cM^g=\set{g(\cdot-\theta)\cond \theta \in [0,1]}$ generated by translations of a ``template function'' $g\in H^r(\Omega)$. Such manifolds arise in parametric transport problems. It is proven in \cite{AGU2025} that $d_n(\cM^g)\lesssim n^{-r}$ (the bound is slightly better than that but it would become technical to present it here, see Theorem 5.3 of \cite{AGU2025}). For wave-type equations, some results have been derived in \cite{GK2019}.}
\end{remark}

Since finding the optimal space is out of reach, we next present existing methods to find good approximation spaces $V_n$ in practice.

\subsubsection{The average case}
When working in the average sense, the space $V_n^{\opt}$ which is the minimizer of $d^{\averagecase}_n(\cM)$ is completely characterized by a singular value decomposition. To explain it, let us recall a few concepts from the spectral theory of compact operators.

\begin{theorem}[Spectral Theorem]
    \label{thm:spectral-thm}
    Let $H$ be a Hilbert space with norm $\norm{\cdot}_H$ and inner product $\inner{\cdot,\cdot}_H$, and let $A: H \to H$ be a self-adjoint, compact, linear operator. Then there exists a system of orthonormal vectors $\{h_i\}_{i\in I}\subset H$ and corresponding real eigenvalues $\{\lambda_i\}_{i\in I}\subset \bR$ with $\abs{\lambda_1}\geq \abs{\lambda_2}\geq \dots$ such that
    $$
    A(h_i) = \lambda_i h_i, \quad \forall i \in I,
    $$
    and $A$ admits an \emph{eigendecomposition} of the form
    $$
    A(h) = \sum_{i\in I} \lambda_i \inner{h_i, h}_H h_i, \quad \forall h\in H.
    $$
    The set $I$ is either finite or countably finite so we work with $I\subseteq \bN$. When $I=\bN$, then $\lambda_n\to 0$ as $n\to \infty$.
\end{theorem}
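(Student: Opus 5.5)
We prove the statement by the classical variational construction, extracting eigenvectors one at a time by maximizing the quadratic form $h\mapsto\inner{Ah,h}_H$ (or $\abs{\inner{Ah,h}_H}$) on the unit sphere of successively smaller invariant subspaces. If $A=0$ there is nothing to prove ($I=\emptyset$), so assume $A\neq 0$.

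\textbf{Step 1: a first eigenvector.} Since $A$ is self-adjoint, $\norm{A}=\sup_{\norm{h}_H=1}\abs{\inner{Ah,h}_H}$. This is the standard polarization argument: $\inner{A(x+y),x+y}-\inner{A(x-y),x-y}=4\,\mathrm{Re}\inner{Ax,y}$, combined with the parallelogram law.
\begin{itemize}
\item Choose a maximizing sequence $(x_k)$ with $\norm{x_k}_H=1$ and $\inner{Ax_k,x_k}_H\to\lambda$, where $\lambda=\pm\norm{A}$.
\item By weak compactness of the unit ball, pass to a weakly convergent subsequence $x_k\rightharpoonup x$. Compactness of $A$ then gives $Ax_k\to Ax$ strongly.
\item Expand
$$
\norm{Ax_k-\lambda x_k}^2=\norm{Ax_k}^2-2\lambda\inner{Ax_k,x_k}+\lambda^2\le 2\lambda^2-2\lambda\inner{Ax_k,x_k}\to 0 .
$$
Hence $\lambda x_k\to Ax$ strongly. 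Since $\lambda\neq0$, $x_k$ converges strongly to $x$, so $\norm{x}=1$ and $Ax=\lambda x$.
\end{itemize}
Set $h_1=x$ and $\lambda_1=\lambda$, so that $\abs{\lambda_1}=\norm{A}$.

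\textbf{Step 2: induction.} Suppose orthonormal eigenvectors $h_1,\dots,h_n$ have been found. Let $H_n=\{h_1,\dots,h_n\}^\perp$.
\begin{itemize}
\item $H_n$ is invariant under $A$: if $h\perp h_i$, then $\inner{Ah,h_i}=\inner{h,Ah_i}=\lambda_i\inner{h,h_i}=0$.
\item The restriction $A|_{H_n}$ is again compact and self-adjoint, so Step 1 applies. If $A|_{H_n}=0$, stop with $I=\{1,\dots,n\}$. Otherwise obtain $h_{n+1}\in H_n$ with $\abs{\lambda_{n+1}}=\norm{A|_{H_n}}$.
\item Since $H_{n+1}\subset H_n$, the norms of the restrictions decrease, which gives $\abs{\lambda_1}\ge\abs{\lambda_2}\ge\dots$.
\end{itemize}

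\textbf{Step 3: decay when $I=\bN$ — the main obstacle.} This is where compactness enters a second time. Suppose $\abs{\lambda_n}\ge\delta>0$ for all $n$. The sequence $(h_n)$ is bounded, yet for $i\neq j$,
$$
\norm{Ah_i-Ah_j}^2=\lambda_i^2+\lambda_j^2\ge 2\delta^2 ,
$$
so $(Ah_n)$ has no convergent subsequence. This contradicts compactness of $A$, hence $\lambda_n\to0$.

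\textbf{Step 4: the eigendecomposition.} Fix $h\in H$ and let $r_n=h-\sum_{i\le n}\inner{h_i,h}h_i$. Then $r_n\in H_n$, and by Bessel's inequality $\norm{r_n}\le\norm{h}$. Consequently
$$
\Big\|Ah-\sum_{i\le n}\lambda_i\inner{h_i,h}h_i\Big\|=\norm{Ar_n}\le\norm{A|_{H_n}}\,\norm{h}=\abs{\lambda_{n+1}}\,\norm{h}.
$$
\begin{itemize}
\item If $I=\bN$, the right-hand side tends to $0$ by Step 3, so the series converges to $Ah$.
\item If the construction stopped at $n$, then $Ar_n=0$ and the sum is finite and exact.
\end{itemize}
In both cases $A(h)=\sum_{i\in I}\lambda_i\inner{h_i,h}_H h_i$ for every $h\in H$, which completes the plan.
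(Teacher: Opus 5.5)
The paper gives no proof of this theorem. It is only recalled there as a standard fact from the spectral theory of compact operators, so there is no argument of the authors to compare yours against.

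Your proposal is the classical variational (Rayleigh-quotient) proof, and it is correct and complete.
\begin{itemize}
\item Step~1 correctly combines $\norm{A}=\sup_{\norm{h}_H=1}\abs{\inner{Ah,h}_H}$, weak sequential compactness of the unit ball and compactness of $A$. This produces a unit eigenvector with $\abs{\lambda_1}=\norm{A}$.
\item The invariance of $\{h_1,\dots,h_n\}^\perp$ drives both the induction and the monotonicity of $\abs{\lambda_n}$.
\item Step~3 is the right second use of compactness.
\item Step~4 gives slightly more than the statement asks. It yields the uniform bound $\norm{A-\sum_{i\le n}\lambda_i\inner{h_i,\cdot}_H h_i}\le\abs{\lambda_{n+1}}$, so the eigendecomposition converges in operator norm, not just pointwise.
\end{itemize}

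Two small points should be made explicit when you write it up.
\begin{itemize}
\item In Step~1, fix the sign of $\lambda$ by passing to a subsequence along which $\inner{Ax_k,x_k}_H$ has constant sign. This quantity is real because $A$ is self-adjoint.
\item In Step~3, reducing ``$\lambda_n\not\to 0$'' to ``$\abs{\lambda_n}\ge\delta$ for all $n$'' uses the monotonicity established in Step~2.
\end{itemize}

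One further remark. Your construction only gives $\overline{\vspan}\{h_i\}_{i\in I}=(\ker A)^\perp$, so your system is an orthonormal basis of $H$ if and only if $\ker A=\{0\}$. This is all the statement claims, and you rightly claim no more. The paper's remark right after the theorem, that $\{h_i\}_{i\in\bN}$ is an orthonormal basis of $H$ when $I=\bN$, does not follow. It fails, for instance, when $A$ has infinitely many nonzero eigenvalues and a nontrivial kernel. So wherever that expansion of $h$ is used later, in the SVD lemma, the system must first be completed by an orthonormal basis of $\ker A$.
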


Note that when $I=\bN$, the system $\{h_i\}_{i\in I}$ forms an orthonormal basis of $H$, and when $I$ is finite, the system can be completed (e.g.~by Gram-Schmidt orthonormalization) to form an orthonormal basis of the full space $H$. As a consequence of \cref{thm:spectral-thm}, we can deduce the following Lemma. We give its proof for the sake of completeness.

\begin{lemma}[Singular Value Decomposition]
    \label{lem:svd}
    Let $H$ and $F$ be two Hilbert spaces (with our usual notation for norms and inner products), and let $B: H \to F$ be a compact, linear operator. Then:
    \begin{enumerate}
        \item $A=B^*B:H\to H$ is self-adjoint, compact and linear with countable eigenvalues $\{\lambda_i\}_{i\in I}$ and corresponding orthonormalized eigenvectors $\{h_i\}_{i\in I}\subset H$.
        \item The set $\{f_i\}_{i\in I}$ with $f_i\coloneqq \lambda_i^{-1/2} B(h_i)$ forms an orhonormal system of $F$.
        \item $B$ has a \emph{singular value decomposition} of the form
        $$
        B(h) = \sum_{i\in I} \lambda_i^{1/2} \inner{h, h_i}_H f_i.
        $$
    \end{enumerate}
\end{lemma}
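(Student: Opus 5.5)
The plan is to derive all three items from \cref{thm:spectral-thm} applied to $A=B^*B$. One caveat must be handled first. In items 2 and 3 the index set $I$ has to be restricted to those indices with $\lambda_i>0$; otherwise $\lambda_i^{-1/2}$ is undefined. We therefore read $I$ in items 2 and 3 as indexing the nonzero eigenvalues, which costs nothing because the terms with $\lambda_i=0$ contribute zero to the expansion in item 3.

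\emph{Step 1 (item 1).} Linearity and boundedness of $A$ follow because the adjoint $B^*:F\to H$ exists and is bounded. Self-adjointness is the identity
\[
\inner{B^*Bh,g}_H=\inner{Bh,Bg}_F=\inner{h,B^*Bg}_H .
\]
For compactness, take a bounded sequence $(x_k)$ in $H$. Since $B$ is compact, $(Bx_k)$ has a convergent subsequence, and the bounded operator $B^*$ maps it to a convergent sequence. So $A=B^*B$ is compact. \cref{thm:spectral-thm} then supplies the orthonormal eigenvectors $h_i$ and real eigenvalues $\lambda_i$. The eigenvalues are nonnegative, since
\[
\lambda_i=\inner{Ah_i,h_i}_H=\norm{Bh_i}_F^2\ge 0 .
\]

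\emph{Step 2 (item 2).} For $\lambda_i,\lambda_j>0$ we compute
\[
\inner{f_i,f_j}_F=(\lambda_i\lambda_j)^{-1/2}\inner{Bh_i,Bh_j}_F=(\lambda_i\lambda_j)^{-1/2}\inner{Ah_i,h_j}_H=(\lambda_i\lambda_j)^{-1/2}\lambda_i\,\delta_{ij}=\delta_{ij},
\]
so the $f_i$ form an orthonormal system of $F$.

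\emph{Step 3 (item 3).} Complete $\{h_i\}$ to an orthonormal basis of $H$, as noted after \cref{thm:spectral-thm}. Then decompose $H=\overline{\vspan}\{h_i:\lambda_i>0\}\oplus\ker A$. On $\ker A$ the operator $B$ vanishes, because $Ah=0$ gives
\[
\norm{Bh}_F^2=\inner{Ah,h}_H=0 .
\]
By the eigendecomposition in \cref{thm:spectral-thm}, every vector orthogonal to all $h_i$ with $\lambda_i\neq0$ lies in $\ker A$, so this splitting is justified.

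Now write $h=\sum_i\inner{h,h_i}_Hh_i+h_0$ with $h_0\in\ker A$. Apply the bounded operator $B$ term by term; this is legitimate because the series converges in $H$ and $B$ is continuous. Using $Bh_i=\lambda_i^{1/2}f_i$ and $Bh_0=0$, we obtain
\[
B(h)=\sum_i\lambda_i^{1/2}\inner{h,h_i}_Hf_i .
\]

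The main obstacle is the bookkeeping in Step 3 around the kernel and the zero eigenvalues, which is where the restriction on $I$ matters. The remaining steps are direct computations.
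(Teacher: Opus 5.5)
Your proof is correct. Items 1 and 2 follow the paper's computation, with two small differences: your compactness argument needs only $B^*$ bounded (the paper invokes compactness of $B^*$), and you add the check $\lambda_i=\norm{Bh_i}_F^2\ge 0$, which makes $\lambda_i^{1/2}$ meaningful.

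Item 3 is where the routes genuinely differ.

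\emph{The paper's route} works in $F$. It asserts $\Im(B)=\vspan\{f_i\}_{i\in I}$, writes $B(h)=\sum_i\inner{B(h),f_i}_F f_i$, and evaluates the coefficients by expanding $h=\sum_j\inner{h,h_j}_H h_j$ in a completed eigenbasis. This is shorter, but the range identity it starts from is stated without proof. What is actually true, and what is needed, is only the inclusion $\Im(B)\subseteq\overline{\vspan}\{f_i\}$. The paper also never addresses indices with $\lambda_i=0$, where $f_i$ is undefined.

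\emph{Your route} works in $H$. You split $h$ into its projection onto $\overline{\vspan}\{h_i:\lambda_i>0\}$ plus a remainder in $\ker A$, then push the convergent series through the continuous operator $B$. Your observation that $\norm{Bh}_F^2=\inner{Ah,h}_H$, so that $B$ vanishes on $\ker A$, is exactly the ingredient that justifies $\Im(B)\subseteq\overline{\vspan}\{f_i\}$. Your argument therefore closes the paper's gaps without needing a complete eigenbasis. It also makes explicit that items 2 and 3 must be read with $I$ restricted to the nonzero eigenvalues.
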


\begin{proof}
    \begin{enumerate}
        \item Since $B$ is compact and linear, its adjoint is also compact and their composition $A=B^*B$ is also compact. By \cref{thm:spectral-thm}, $A$ has an eigenvalue decomposition.
        \item For any $(i,j)\in I^2$,
        $$
        \inner{f_i, f_j}_F = (\lambda_i \lambda_j)^{-1/2} \inner{B(h_i), B(h_j)}_F = (\lambda_i \lambda_j)^{-1/2} \inner{h_i, B^*B(h_j)}_H = \delta_{i,j}
        $$
        \item Since $\Im(B)=\vspan\{B(h_i)\}_{i\in I}=\vspan\{f_i\}_{i\in I}$, and that the $f_i$ are orthonormal, we have for all $h\in H$,
        \begin{align*}
            B(h) & = \sum_{i\in I} \inner{B(h), f_i}_F f_i                                                                                                      \\
            & = \sum_{i, j \in I} \inner{h, h_j}_H \inner{B(h_j), f_i}_F f_i \quad \text{since } h = \sum_{j\in I} \inner{h,h_j}_H h_j                     \\
            & = \sum_{i, j \in I} \inner{h, h_j}_H \lambda_i^{1/2} \delta_{i,j} f_i \quad \text{since } \inner{B(h_j), f_i}_F=\lambda_i^{1/2} \delta_{i,j} \\
            & = \sum_{i\in I} \lambda_i^{1/2} \inner{h,h_i}_H f_i.
        \end{align*}
    \end{enumerate}
\end{proof}

We now leverage these results to characterize a space $V_n^{\opt}$ that is a minimizer of $d^{\averagecase}_n(\cM)$. The development requires to assume that our mapping $u:\Theta\to V$ is such that $u\in L^2((\Theta, \rho), V)$. In other words, we need that
\begin{equation}
    \label{eq:u-param-integrability}
    \norm{u}^2_{L^2((\Theta, \rho), V)} =
    \int_{\theta\in\Theta} \norm{u(\theta)}^2_V \rho(\dd\theta)<+\infty.
\end{equation}
To formulate the result, we need to introduce the operator
\begin{align}
    R : L^2(\Theta, \rho) & \to V                                                                                                                                                                           \\
    \varphi                & \mapsto R(\varphi)(x) \coloneqq \int_{\Theta} u(\theta)(x) \varphi(\theta) \rho(\dd\theta) = \inner{u(\cdot)(x), \varphi(\cdot)}_{L^2(\Theta,\rho)}, \quad \forall x\in \Omega.
\end{align}

\begin{theorem}
    \label{thm:svd-ppdes}
    Suppose that the mapping $u:\Theta\to V$ is such that $u\in L^2((\Theta, \rho), V)$. Then:
    \begin{enumerate}
        \item $R$ is a bounded linear operator with adjoint
        \begin{align}
            R^*: V & \to L^2(\Theta, \rho)                                                                                    \\
            v      & \mapsto R^*(v)(\theta) = \inner{u(\theta), v}_{V}, \quad \forall \theta \in \Theta. \label{eq:R-adjoint}
        \end{align}
        \item The operator
        \begin{align}
            C\coloneqq R^*R : L^2(\Theta, \rho) & \to L^2(\Theta, \rho)                                                                                                                   \\
            \varphi                             & \to C(\varphi)(\theta)\coloneqq \int_\Theta  k(\theta,\theta') \varphi(\theta')\rho(\dd\theta'),\quad \forall \theta\; \rho\text{-a.e.}
            \label{eq:HS}
        \end{align}
        with
        $$
        k(\theta,\theta')=\inner{u(\theta), u(\theta'}_V,
        $$
        is linear, compact, and self-adjoint, with countable eigenvalues $\{\lambda_i\}_{i\in I}$, and corresponding orthonormalized eigenvectors $\{h_i\}_{i\in I}\subset L^2(\Theta, \rho)$. If necessary, the set of eigenvectors can be completed to form an orthonormal basis $\{h_i\}_{i\in \bN}$ of $L^2(\Theta, \rho)$.
        \item $R$ has a singular value decomposition $$R(\varphi) = \sum_{i\in I} \lambda_i^{1/2} \inner{\varphi, h_i}_{L^2(\Theta, \rho)} f_i,$$ where $f_i\coloneqq \lambda_i^{-1/2} R(h_i)$ and $\{f_i\}_{i\in I}$ forms an orthonormal system in $V$ which can be completed to form an orthonormal basis of $V$.
        \item The space $V_n^{\opt}\coloneqq \vspan\{f_1,\dots, f_n\}$ is optimal for $d_n^{\averagecase}(\cM)$, and
        $$
        d_n^{\averagecase}(\cM) = \left(\sum_{i>n} \lambda_i^{1/2}\right)^{1/2}.
        $$
    \end{enumerate}
\end{theorem}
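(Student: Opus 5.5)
The plan is to prove the four items in order, each reducing to earlier results, with the real work in item 4.

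\emph{Item 1.} For $\varphi\in L^2(\Theta,\rho)$, I view $R(\varphi)=\int_\Theta u(\theta)\varphi(\theta)\rho(\dd\theta)$ as a Bochner integral in $V$. By Cauchy--Schwarz,
$$\norm{R\varphi}_V\le \int_\Theta\norm{u(\theta)}_V\abs{\varphi(\theta)}\rho(\dd\theta)\le \norm{u}_{L^2((\Theta,\rho),V)}\norm{\varphi}_{L^2(\Theta,\rho)},$$
which gives boundedness. For the adjoint, I pull the inner product inside the Bochner integral:
$$\inner{R\varphi,v}_V=\int_\Theta \varphi(\theta)\inner{u(\theta),v}_V\rho(\dd\theta).$$
This identifies $R^*v(\theta)=\inner{u(\theta),v}_V$. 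Cauchy--Schwarz together with \eqref{eq:u-param-integrability} shows that $R^*v$ indeed lies in $L^2(\Theta,\rho)$.

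\emph{Item 2.} Composing the two formulas and exchanging the integral with the inner product gives
$$C\varphi(\theta)=\int_\Theta \inner{u(\theta),u(\theta')}_V\varphi(\theta')\rho(\dd\theta').$$
Linearity is clear. Self-adjointness follows either from $C=R^*R$ or from the symmetry $k(\theta,\theta')=k(\theta',\theta)$. For compactness I show $C$ is Hilbert--Schmidt: since $\abs{k(\theta,\theta')}\le\norm{u(\theta)}_V\norm{u(\theta')}_V$, we get $\iint \abs{k}^2\,\rho\otimes\rho\le\norm{u}^4_{L^2((\Theta,\rho),V)}<\infty$, and Hilbert--Schmidt integral operators are compact. \cref{thm:spectral-thm} then gives the eigenpairs $(\lambda_i,h_i)$, and $\lambda_i=\norm{Rh_i}_V^2\ge0$. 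Completion to a basis is as in the remark after that theorem.

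\emph{Item 3.} The same kernel bound shows $R$ is Hilbert--Schmidt, hence compact. Item 3 is then exactly \cref{lem:svd} with $B=R$, restricted to the indices with $\lambda_i>0$. Eigenvectors with $\lambda_i=0$ satisfy $Rh_i=0$ and contribute nothing to the sum.

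\emph{Item 4.} This is the main step. Take any $W\subset V$ with $\dim W\le n$ and an orthonormal basis $w_1,\dots,w_n$ (padded with zeros if needed). Then
$$\cE^{\averagecase}_\rho(\cM;W)^2=\int_\Theta\norm{u(\theta)}_V^2\rho(\dd\theta)-\sum_{j=1}^n\norm{R^*w_j}^2_{L^2(\Theta,\rho)}.$$
Next I record two facts. First, the trace identity $\int_\Theta\norm{u(\theta)}_V^2\rho(\dd\theta)=\sum_{i}\lambda_i$: expand $\norm{u(\theta)}_V^2$ in the basis completing $\{f_i\}$ and use $\inner{u(\theta),f_i}_V=R^*f_i(\theta)=\lambda_i^{1/2}h_i(\theta)$, which follows from item 3. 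Second, $\norm{R^*w}^2=\sum_i\lambda_i\inner{w,f_i}_V^2$.

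Set $a_i\coloneqq\sum_j\inner{w_j,f_i}_V^2$. Bessel's inequality in two directions gives $0\le a_i\le1$ and $\sum_i a_i\le n$. Since the $\lambda_i$ are nonincreasing, a rearrangement (Ky Fan type) argument yields $\sum_i\lambda_i a_i\le\sum_{i\le n}\lambda_i$. Equality holds when $w_j=f_j$, so $V_n^{\opt}$ is optimal.

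This computation gives $d_n^{\averagecase}(\cM)=\big(\sum_{i>n}\lambda_i\big)^{1/2}$. I therefore expect the exponent $1/2$ on $\lambda_i$ inside the sum in the statement to be a typo, since the singular values are $\lambda_i^{1/2}$ and their squares $\lambda_i$ are what get summed. The delicate points are the trace identity and the Ky Fan step; everything else is routine.
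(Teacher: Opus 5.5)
Items 1--3 of your proposal match the paper's proof: Cauchy--Schwarz for boundedness, swapping the inner product and the integral to identify $R^*$, the Hilbert--Schmidt kernel bound for compactness of $C$, and then \cref{lem:svd}. You also tidy up points the paper leaves implicit: $\lambda_i=\norm{Rh_i}_V^2\geq 0$, compactness of $R$ itself, and discarding zero eigenvalues before dividing by $\lambda_i^{1/2}$.

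The genuine difference is in item 4. The paper expands $u(\theta)$ in $\{f_i\}$ to get $\cE^{\averagecase}_\rho(\cM;V_n^{\opt})^2=\sum_{i>n}\lambda_i$. It then compares only with subspaces spanned by $n$ of the $f_i$, and merely asserts the general case (``by using that the basis elements can be expressed as linear combinations of the $\{f_i\}$, we can prove that\dots''). Your argument writes $\cE^{\averagecase}_\rho(\cM;W)^2=\sum_i\lambda_i-\sum_i\lambda_i a_i$ with $a_i=\norm{P_Wf_i}^2\in[0,1]$ and $\sum_i a_i\leq n$. It then uses $\sum_i\lambda_ia_i-\sum_{i\leq n}\lambda_i\leq\lambda_n\bigl(\sum_ia_i-n\bigr)\leq 0$. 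This is exactly the missing step, so your proof covers every $W$ with $\dim W\leq n$, whereas the paper's is only a sketch for that part; the paper's version gains only brevity.

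When you write up the trace identity, state explicitly that the vectors completing $\{f_i\}$ to a basis of $V$ lie in $(\Im R)^\perp=\ker R^*$, so they contribute zero.

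You are also right about the exponent. The paper's own computation gives $\sum_{i>n}\lambda_i$ for the squared error, so $d_n^{\averagecase}(\cM)=\bigl(\sum_{i>n}\lambda_i\bigr)^{1/2}$, and the $\lambda_i^{1/2}$ in the statement is a typo.
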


\begin{proof}
    \begin{enumerate}
        \item $R$ and $R^*$ are linear. They are also bounded because our assumption $u\in L^2((\Theta, \rho), V)$ implies that \eqref{eq:u-param-integrability} holds. In addition, it holds that for all $(\varphi, v)\in L^2(\Theta,\rho)\times V$,
        \begin{equation*}
            \inner{R(\varphi), v}_V
            = \inner*{\int_{\Theta} u(\theta)(\cdot) \varphi(\theta) \rho(\dd\theta) , v(\cdot)}_V
            = \int_{\Theta} \inner{ u(\theta), v}_V \varphi(\theta) \rho(\dd\theta)
            = \inner*{\varphi, \inner*{ u(\cdot), v}_V}_{L^2(\Theta,\rho)}.
        \end{equation*}
        This proves that $R^*(v)=\inner{ u(\cdot), v}_V$ is the adjoint of $R$.
        \item By the Cauchy-Schwarz inequality and assumption \eqref{eq:u-param-integrability},
        \begin{equation*}
            \norm{k}^2_{L^2(\Theta\times\Theta, \rho\otimes \rho)} = \int_{\Theta^2} k^2(\theta,\theta')\rho(\dd\theta)\rho(\dd\theta') \leq \norm{u}^4_{L^2((\Theta, \rho), V)} <+\infty.
        \end{equation*}
        Therefore, $C$ is a Hilbert-Schmidt integral operator, so it is compact, see \cite[Theorem VI.22]{RS1980}. Since $k$ is symmetric, $C$ is also self-adjoint. By \cref{thm:spectral-thm}, $C$ admits an eigenvalue decomposition.
        \item We obtain the statement by following the same lines as for point 3 in \cref{lem:svd} (use $H=L^2((\Theta, \rho))$ and $F=V$).
        \item We now prove that $V_n^\opt = \vspan\{ f_1,\dots, f_n \}_{i=1}^n$ minimizes $d_n^{\averagecase}(\cM)$. By expanding $u(\theta)$ in the orthonormal basis $\{f_i\}_{i\in \bN}$, we have that
        $$
        \cE^{\averagecase}_\rho(\cM; V_n^\opt)^2
        =
        \int_\Theta \Vert u(\theta) - P_{V_n^\opt} u(\theta) \Vert_V^2 \rho(\dd\theta) = \sum_{i>n} \int_\theta | \inner{u(\theta), f_i}_V |^2 \rho(\dd\theta).
        $$
        From \cref{eq:R-adjoint}, $\inner{u(\theta), f_i}_V = R^*(f_i) = \lambda_i^{1/2} h_i$, thus we find that
        $$
        \cE^{\averagecase}_\rho(\cM; V_n^\opt)^2
        = \sum_{i>n} \Vert R^*(v_i) \Vert^2_{L^2(\Theta, \rho)} = \sum_{i>n}  \lambda_i .
        $$
        To see that this is the optimal space for $d_n^{\averagecase}(\cM)$, we proceed as follows. Suppose first that $V_n = \vspan\{ f_{i_1}, \dots, f_{i_n}\}$ and denote $I_n = \{i_1,\dots, i_n\}$. The same computation that we have just done for $V_n^{\opt}$ leads to
        $$
        \cE^{\averagecase}_\rho(\cM; V_n)^2
        = \sum_{i \not\in I_n}  \lambda_i \geq \cE^{\averagecase}_\rho(\cM; V_n^\opt)^2,
        $$
        and we see that this error is minimized when we choose $I_n = \{1, \dots, n\}$ (which corresponds to $V_n^\opt$). More in general, for any $n$-dimensional space $V_n=\vspan\{v_1,\dots, v_n\}$, by using that the basis elements can be expressed as linear combinations of the $\{f_i\}_{i\in I}$, we can prove that
        $$
        \cE^{\averagecase}_\rho(\cM; V_n) \geq
        \cE^{\averagecase}_\rho(\cM; V_n^\opt)
        $$
        for any $n$-dimensional space $V_n$. This shows that
        $$
        V_n^{\opt} \in \argmin_{\substack{\dim(W) \leq n \\ W \subseteq V}} \cE^{\averagecase}_\rho(\cM; W),
        $$
        and concludes the proof.
    \end{enumerate}
\end{proof}

In practice, we find a numerical approximation of $V_n^\opt$ by building a sampled version of the operator $C$ (see equation \eqref{eq:HS}). For this, we draw $K$ independent random samples $\theta_i$ from the probability distribution $\rho$, and we define the empirical measure $\widehat \rho_K \coloneqq \frac 1 K \sum_{i=1}^K \delta_{\theta_i}$. We next define the associated operator
\begin{align*}
    \widehat C_K : L^2(\Theta, \widehat \rho_K) & \to L^2(\Theta, \widehat \rho_K)                                                                                                                             \\
    \varphi                                     & \to \widehat C_K(\varphi)(\theta)\coloneqq \frac 1 K \sum_{j=1}^K k(\theta,\theta_j) \varphi(\theta_j), \quad \forall \theta \; \widehat \rho_K\text{-a.e.}
\end{align*}
By direct application of point 2 of \cref{thm:svd-ppdes}, $\widehat C_K$ has an eigenvalue decomposition with countable eigenvalues $\{\widehat \lambda_i^{(K)}\}_{i\in I}$ and orthonormalized eigenvectors $\{\widehat h_i^{(K)}\}_{i\in I}\subset L^2(\Theta, \widehat \rho_K)$, and they satisfy $\widehat C_K(\widehat h_i^{(K)})(\theta)= \widehat \lambda_i^{(K)}\widehat h_i^{(K)}(\theta)$. Since this relation holds for all $\theta$ $\widehat\rho_K$-almost everywhere, it means that it has to hold for all samples $\theta_i$ for $i=1,\dots, K$. Consequently, each eigenpair $(\widehat \lambda, \widehat h)$ satisfies
\begin{equation}
    \label{eq:CK-eigen}
    \widehat C_K(\widehat h)(\theta_i)= \widehat \lambda\widehat h(\theta_i), \quad \forall i=1,\dots, K
    \quad \Leftrightarrow \quad
    \widehat{\tC}_K \widehat{\textbf{h}} = \widehat \lambda \widehat{\textbf{h}}
\end{equation}
where
$$
\widehat{\tC}_K \coloneqq \prt*{ k(\theta_i,\theta_j)}_{1\leq i, j \leq K}, \quad
\widehat{\textbf{h}} \coloneqq \prt{\widehat h(\theta_i)}_{i=1}^K.
$$
Therefore, the eigenvalues and eigenfunctions of the operator $\widehat C_K$ are given by the eigendecomposition of the matrix $\widehat{\tC}_K$ which is symmetric, positive definite.

From this development, we can derive a practical strategy to build an approximation $\widehat f_i^{(K)}$ of the \corr{orthonormal basis} $f_i$ that span the optimal space $V_n^\opt$:
\begin{enumerate}
    \item Compute $u(\theta_i)$ for the random samples $\theta_i \sim \rho$, and assemble the matrix $\widehat{\tC}_K\in \bR^{K\times K}$. In practice, this is done with a classical discretization solver, so we actually work with $u_h(\theta_i)\in V_h$.
    \item Compute the eigenvalue decomposition of $\widehat{\tC}_K$ which we denote $\widehat{\tC}_K = \widehat{\textbf{U}} \widehat{\bm{S}} \widehat{\textbf{U}}^\top$ where \corr{$\widehat{\textbf{U}}=(\widehat{U}_{i,j})_{1\leq i,j\leq K})\in \bR^{K\times K}$} is the orthogonal matrix carrying the eigenvectors on each column, and $\widehat{\bm{S}}=\diag(s_i)\in \bR^{K\times K}$ is the diagonal matrix containing the nonnegative eigenvalues.
    \item From \cref{eq:CK-eigen}, it follows that $s_i = \widehat \lambda_i^{(K)}$ for $i=1,\dots, K$, and that \begin{equation}
                                                                                                                     \widehat h_i^{(K)}(\theta)=
                                                                                                                     \begin{cases}
                                                                                                                         \widehat U_{j,i}, & \quad \text{ if }\theta=\theta_j \\
                                                                                                                         0,                & \quad \text{ otherwise}.
                                                                                                                     \end{cases}
    \end{equation}
    \item We can finally define the approximation of $f_i$ as
    \begin{equation}
        \widehat f_i^{(K)}\coloneqq s_i^{-1/2} \widehat R_K\prt{\widehat h_i^{(K)}} = \frac{s_i^{-1/2}}{K}\sum_{j=1}^K u_h(\theta_j) \widehat h_i(\theta_j),
    \end{equation}
    and assuming that $K\geq n$, we can consider the space $\widehat V_n \coloneqq \vspan\{\widehat f_i^{(K)}\}_{i=1}^n$ as a surrogate of $V_n^\opt$.
    \item In practice, we usually work with evaluations of $\widehat f_i^{(K)}$ at points $\{x_1,\dots, x_N\} \subset \Omega$. This can be done by matrix multiplication as
    $$
    \widehat{\bm{F}} = \widehat{\bm{S}}^{-1/2} \widehat{\bm{U}}^\top \widehat{\bm{T}},
    $$
    where
    $$
    \widehat{\bm{F}} = \prt*{ \widehat f_i^{(K)}(x_j) }_{\substack{1\leq i \leq n \\ 1\leq j \leq N}},
    \quad\text{and}\quad
    \widehat{\bm{T}} = \prt*{ u_h(\theta_i)(x_j) }_{\substack{1\leq i \leq K \\ 1\leq j \leq N}}.
    $$
\end{enumerate}

\begin{remark}[Historical Note]
  The singular value decomposition (SVD) has its roots in early functional analysis and matrix approximation theory, beginning with Schmidt’s work on compact operators (\cite{schmidt1907zur}), and the optimal low-rank approximation theorem of Eckart and Young (\cite{eckart1936approximation}). It later became closely connected to the Karhunen–Loève expansion (\cite{karhunen1947uber,loeve1948fonctions}), principal component analysis (PCA, see \cite{pearson1901lines, hotelling1933analysis}), and proper orthogonal decomposition (POD), eventually emerging as a central computational tool in reduced-order modeling \cite{sirovich1987turbulence1,berkooz1993pod,holmes1996turbulence}.
\end{remark}

\subsubsection{The worst case}
\label{sec:plain-greedy}
For the worst case sense there is no explicit characterization of an optimal space $V_n^{\opt}$ that is a minimizer of $d^{\worstcase}_n(\cM)$. However, one can build subspaces $V_n$ for which $\cE^{\worstcase}(\cM, V_n)$ decays at a comparable rate as the Kolmogorov $n$-width $d_n^\worstcase(\cM)$ as $n\to \infty$.
\new{
    The construction of such a space is done with the following iterative algorithm (see \cite{BCDDPW2011, DPW2013}).

    \textbf{Greedy algorithm}: For $n=1$, we find
}
\begin{equation}
    \label{eq:vanilla-greedy-n1}
    u_1 \in \argmax_{u\in \cM} \Vert u \Vert_V
\end{equation}
and define
$$
\rU_1 \coloneqq \{u_1\}, \quad V_1\coloneqq \vspan\{u_1\}.
$$
For $n>1$, given $\rU_{n-1}$ and $V_{n-1}$, we find
\begin{equation}
    \label{eq:vanilla-greedy-n}
    u_n \in \argmax_{u\in \cM} \min_{v\in V_{n-1}} \Vert u -v\Vert_V
\end{equation}
and set
$$
\rU_n = \rU_{n-1} \cup \{u_n\}, \quad V_n = \vspan\{ \rU_n \}.
$$

\new{
    The maximization problems \eqref{eq:vanilla-greedy-n1}-\eqref{eq:vanilla-greedy-n} defined over the full manifold $\cM$ are not computable in general.
    For this reason, in practice the maximization is performed over the finite set $\cM^K\coloneq \{u(\theta_1), \dots, u(\theta_K)\}$. The weak greedy version of \eqref{eq:vanilla-greedy-n1}-\eqref{eq:vanilla-greedy-n} allows us to get guaratees for a family of greedy methods including the former algorithm defined on $\cM$ and the latter empirical version using $\cM^K$ instead of $\cM$.
}

\corr{
    \textbf{Weak greedy algorithm}: for a parameter $\gamma\in (0, 1]$, we search for
}
\begin{equation}
    u_1 \in \{ u \in \cM\cond \Vert u \Vert_V \geq \gamma \sup_{v\in \cM} \Vert v \Vert_V  \}
\end{equation}
and for $n>1$,
\begin{equation*}
    u_n \in \{ u \in \cM\cond \inf_{v\in V_{n-1}}\Vert u-v \Vert_V \geq \gamma \sup_{z\in \cM} \inf_{v\in V_{n-1}} \Vert z-v \Vert_V  \}.
\end{equation*}
The parameter $\gamma$ models how close an optimization method can approach the real optimum. When $\gamma=1$, we recover the original greedy algorithm.

The following result shows that the weak-greedy algorithm generates a sequence of spaces $\{V_n\}_n$ for which $\cE^{\worstcase}(\cM; V_n)$ decays at a comparable rate as the Kolmogorov $n$-width $\d_n^\worstcase(\cM)$.

\begin{theorem}[see Corollary 3.3 of \cite{DPW2013}]
    \label{thm:conv-greedy}
    Consider the weak greedy algorithm with parameter $\gamma\in (0,1]$. Then:
    \begin{itemize}
        \item For any $C_0>0$ and $s>0$, we have
        \begin{equation}
            d_n^\worstcase(\cM) \leq C_0 n^{-s},\; n\geq 1 \quad \Longrightarrow \cE^{\worstcase}(\cM, V_n) \leq C_1 n^{-s}, \; n\geq 1,
        \end{equation}
        with $C_1 \coloneqq 2^{5s+1}\gamma^{-2}C_0$.
        \item For any $c_0, C_0>0$ and $s>0$, we have
        \begin{equation}
            d_n^\worstcase(\cM) \leq C_0 e^{-c_0 n^{s}},\; n\geq 1 \quad \Longrightarrow \cE^{\worstcase}(\cM, V_n) \leq \sqrt{2C_0} \gamma^{-1} e^{-c_1 n^{s}}, \; n\geq 1,
        \end{equation}
        where $c_1 = 2^{-1-2s}c_0$.
    \end{itemize}
\end{theorem}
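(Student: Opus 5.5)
I would follow the DeVore–Petrova–Wojtaszczyk argument. It reduces the statement to a purely matrix-theoretic inequality relating the greedy errors $\sigma_n\coloneqq \cE^{\worstcase}(\cM;V_n)=\sup_{u\in\cM}\dist(u,V_n)$ to the widths $d_n\coloneqq d_n^\worstcase(\cM)$.

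\textbf{Step 1: a lower-triangular representation.} Apply Gram--Schmidt to the greedy elements $u_1,u_2,\dots$ to obtain an orthonormal system $\phi_1,\phi_2,\dots$ with $V_n=\vspan\{\phi_1,\dots,\phi_n\}$. Write $u_i=\sum_{j\le i} a_{ij}\phi_j$. The matrix $A=(a_{ij})$ is then lower triangular. Its diagonal entries are $a_{ii}=\dist(u_i,V_{i-1})$, and the weak greedy selection rule gives $a_{ii}\ge\gamma\sigma_{i-1}$. Every row below the diagonal satisfies $\sum_{j\ge m}a_{ij}^2=\dist(u_i,V_{m-1})^2\le\sigma_{m-1}^2$.

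\textbf{Step 2: the key determinant inequality.} Fix $N\ge 0$, $K\ge1$, $1\le m<K$. Take the $K\times K$ block $G=(a_{N+i,N+j})_{i,j=1}^K$. Its determinant is the product of its diagonal entries, so it is at least $\gamma^K\prod_{i=1}^K\sigma_{N+i-1}$.

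Now let $W$ be a space of dimension $m$ with $\sup_{u\in\cM}\dist(u,W)\le d_m+\eps$. Project the rows of $G$ onto the projection of $W$ inside $\vspan\{\phi_{N+1},\dots,\phi_{N+K}\}$, whose dimension is at most $m$. This splits $G=G'+G''$, where $\mathrm{rank}\,G'\le m$ and the rows of $G''$ have norm at most $d_m+\eps$.

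Use Hadamard's inequality together with the AM–GM inequality on the singular values of $G$, restricted to the rank-$m$ part and its orthogonal complement. This yields
\[
\prod_{i=1}^K\sigma_{N+i}^2\le \gamma^{-2K}\Big(\tfrac{K}{m}\Big)^m\Big(\tfrac{K}{K-m}\Big)^{K-m}\sigma_{N+1}^{2m}\,d_m^{2(K-m)}.
\]
Here the row bound $\sigma_{N+1}$ for $G'$ comes from Step 1. I expect this linear-algebra estimate to be the main obstacle. I would prove it by choosing an orthonormal basis adapted to the rank-$m$ subspace and applying Hadamard's inequality blockwise.

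\textbf{Step 3: extracting rates.} Since $\sigma_n$ is nonincreasing, the left side is at least $\sigma_{N+K}^{2K}$.

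For the polynomial case, take $N=K=n$ and $m=\lfloor n/2\rfloor$ or similar. Then $\sigma_{2n}^{2n}\le\gamma^{-2n}2^{n}\cdot 2^{n}\,\sigma_{n}^{n}(C_0 m^{-s})^{n}$. Argue by contradiction, or by induction on dyadic scales. Assume $\sigma_n\le C_1n^{-s}$ fails at a first index $2n$ and plug in the induction hypothesis at $n$. Tracking the constants produces $C_1=2^{5s+1}\gamma^{-2}C_0$.

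For the exponential case, take $N=0$, $K=n$, and $m\approx n/2$. Use $\sigma_0\le\sup_{\cM}\norm{u}\le\sqrt{C_0}$-type bounds, for instance via $d_0$, and the estimate $(K/m)^m(K/(K-m))^{K-m}\le 2^K$. This gives $\sigma_n^{2}\le 2C_0\gamma^{-2}e^{-c_0 (n/2)^s\cdot(\text{fraction})}$, which produces $c_1=2^{-1-2s}c_0$ after simplification.
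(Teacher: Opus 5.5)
Your plan is the DeVore--Petrova--Wojtaszczyk argument that the paper cites without reproducing. Steps~1--2 are Lemma~2.1 and Theorem~3.2 of \cite{DPW2013}: Hadamard's inequality on the columns of $G$, written in an orthonormal basis adapted to the rank-$m$ subspace, followed by AM--GM on the two groups of column norms. They are sound up to an index shift discussed below.

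The genuine gap is in the exponential case, at the step ``$\sigma_0\le\sup_{\cM}\norm{u}\le\sqrt{C_0}$ via $d_0$''. No such bound is available. The hypothesis is only imposed for $n\ge1$, and $d_0=\sigma_0=\sup_{u\in\cM}\norm{u}_V$ is unconstrained. Moreover, even $\sigma_0\le\sqrt{C_0}$ would put $C_0^{3/2}$, not $C_0$, in front of the exponential.

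In fact the gap cannot be closed from the hypotheses as written. Replacing $\cM$ by $\lambda\cM$ multiplies $\sigma_n$, $d_n$ and the admissible $C_0$ by $\lambda$, whereas $\sqrt{2C_0}$ only grows like $\sqrt{\lambda}$. Concretely, take $\cM=\{\lambda e_1,\tfrac{\lambda}{2}e_2\}$ with $e_1,e_2$ orthonormal. Then $d_1=\lambda/\sqrt{5}$ and $d_n=0$ for $n\ge2$, so $C_0=\lambda e^{c_0}/\sqrt{5}$ is admissible. The greedy choice $u_1=\lambda e_1$ leaves $\sigma_1=\lambda/2$, which exceeds $\sqrt{2C_0}\,\gamma^{-1}e^{-c_1}$ for $\lambda$ large.

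The missing ingredient is the standing normalization of \cite{DPW2013} that $\cM$ lies in the unit ball of $V$; the statement here omits it. With it, $\sigma_1\le\sigma_0\le1$ and $d_m\le1$. Taking $N=0$, $K=n$, $m=\lfloor n/2\rfloor$ then gives $\sigma_n\le\sqrt{2}\,\gamma^{-1}d_m^{(n-m)/n}\le\sqrt{2}\,\gamma^{-1}d_m^{1/2}$. Since $m\ge n/4$ for $n\ge2$, this yields exactly $c_1=2^{-1-2s}c_0$. For $n=1$, use $\sigma_1\le\sqrt{\sigma_1\cdot2\gamma^{-2}d_1}\le\sqrt{2d_1}\,\gamma^{-1}$. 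The polynomial bullet is homogeneous in $\cM$ and needs no normalization.

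There are also smaller bookkeeping points.

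\textbf{Index shift.} In your $1$-based indexing, the block $(a_{N+i,N+j})_{i,j=1}^K$ gives $\prod_{i=1}^K\sigma_{N+i-1}^2$ on the left and row norms bounded by $\sigma_N$, not $\sigma_{N+1}$. Your display is the $0$-based statement of \cite{DPW2013}, i.e.\ your block at $N+1$. This is harmless, but keep the unshifted form. At $N=0$, $K=2$, $m=1$ it gives $\sigma_1\le2\gamma^{-2}d_1$, which the shifted display never yields. You need this bound both for $n=1$ above and as a base case of the polynomial induction.

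\textbf{Polynomial constant.} The combinatorial factor is $2^n$, not $2^n\cdot2^n$. With the extra $2^n$, the step at $M=2n$ requires $C_1\ge2^{3s+2}\gamma^{-2}C_0$, which exceeds the claimed $2^{5s+1}\gamma^{-2}C_0$ when $s<1/2$. With the correct factor, $m=n/2$ for even $n$ and $m=(n-1)/2$ for odd $n\ge3$ close the induction with $C_1=2^{5s+1}\gamma^{-2}C_0$.

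\textbf{Odd indices and base cases.} An odd first failure $M=2n+1$ is handled by $\sigma_{2n+1}\le\sigma_{2n}$ and $2n+1\le\tfrac{5}{2}n$ (valid for $n\ge2$). The cases $M\le3$ follow from $\sigma_1,\sigma_2\le2\gamma^{-2}d_1$.
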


Chaining \cref{thm:kolmo-width-poisson} with \cref{thm:conv-greedy}, we directly obtain that the weak greedy has a sub-exponential decay rate for the Poisson problem. We record this fact in the following corollary.

\begin{corollary}
    \label{cor:poisson-weak-greedy}
    The weak greedy algorithm with parameter $\gamma\in(0,1]$ applied to $\cMpoisson$ gives
    $$
    \cE^{\worstcase}(\cMpoisson, V_n) \leq \sqrt{2 C} \gamma^{-1} e^{- 2^{-1-\frac{2}{d}}c n^{1/d}}.
    $$
    where $C,\,c$ are the constants involved in  \cref{thm:kolmo-width-poisson}.
\end{corollary}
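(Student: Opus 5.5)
The proof is a direct chaining of \cref{thm:kolmo-width-poisson} with the second item of \cref{thm:conv-greedy}, so the plan is to check that the hypotheses of the latter are met with explicit constants and then read off the conclusion.

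First, we invoke \cref{thm:kolmo-width-poisson}. Under the uniform ellipticity assumption \eqref{eq:UEA}, it gives constants $C,c>0$ such that
\begin{equation*}
d_n^\worstcase\prt*{\cMpoisson} \leq C e^{-c n^{1/d}}, \qquad n\geq 1.
\end{equation*}
This is exactly the hypothesis of the second item of \cref{thm:conv-greedy}, with the identifications $C_0 = C$, $c_0 = c$ and $s = 1/d>0$.

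Second, we recall the standing assumptions on $\cM$ that \cref{thm:conv-greedy} requires: $\Theta$ is compact and $\theta\mapsto u(\theta)$ is continuous. Hence $\cMpoisson$ is compact in $V=H^1_0(\Omega)$, so the suprema in the weak greedy algorithm are finite and the algorithm is well defined for any $\gamma\in(0,1]$. The theorem then yields
\begin{equation*}
\cE^{\worstcase}(\cMpoisson, V_n) \leq \sqrt{2C}\,\gamma^{-1} e^{-c_1 n^{1/d}}, \qquad c_1 = 2^{-1-2s}c_0 = 2^{-1-\frac{2}{d}}c,
\end{equation*}
which is the claimed bound.

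There is no real obstacle here. The only points needing care are bookkeeping: the exponent $s=1/d$ must be substituted consistently into $c_1$, and the constants $C,c$ must be exactly those from \cref{thm:kolmo-width-poisson}. One should also note that the estimate holds for every $n\geq 1$, since both cited results are stated uniformly in $n\geq 1$.
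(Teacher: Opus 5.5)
Your proposal is correct and matches the paper's argument: the corollary follows by chaining \cref{thm:kolmo-width-poisson} with the second item of \cref{thm:conv-greedy} under the identifications $C_0=C$, $c_0=c$, $s=1/d$. This gives exactly $c_1=2^{-1-\frac{2}{d}}c$ and the prefactor $\sqrt{2C}\,\gamma^{-1}$.
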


\begin{figure}[ht]
    \centering
    \includegraphics[width=0.5\linewidth]{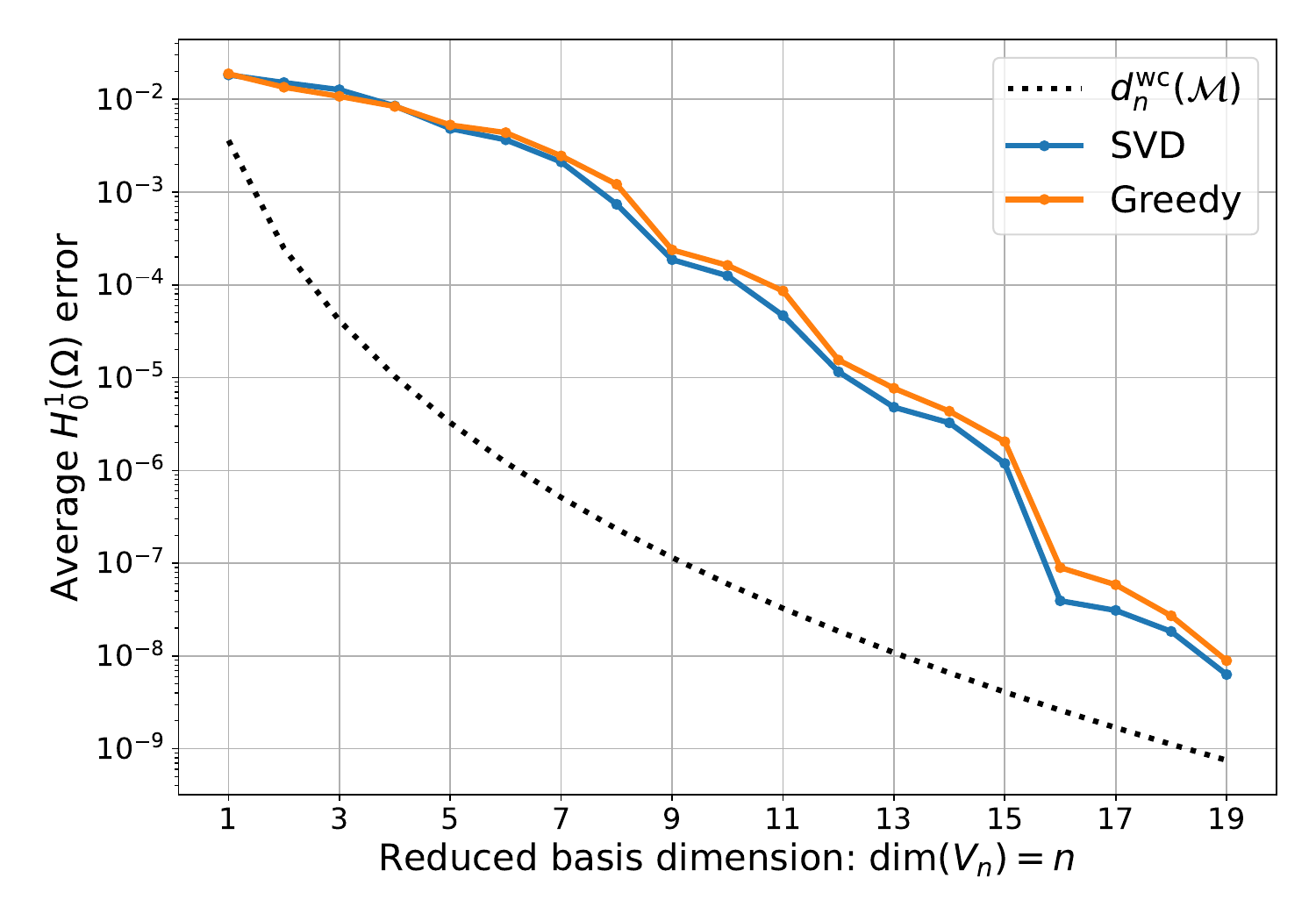}
    \caption{Estimates of the $\cE^{\worstcase}(\cM; V_n)$ and $\cE^{\averagecase}_\rho(\cM; V_n)$ for the Poisson example \eqref{eq:poisson} with $p=4$, and when $\psi_i=\indicative_{\Omega_i}$ are constant functions in their corresponding subdomains $\Omega_i$ (as we have seen in \cref{fig:poisson}). The estimates are obtained by generating a discrete training set of $K=200$ samples of $u(\theta)$, which was obtained by uniformly sampling $\theta\sim [1, 10]^4$. We then run the greedy algorithm, and compute the SVD on this discrete set to estimate $\cE^{\worstcase}(\cM; V_n)$ and $\cE^{\averagecase}_\rho(\cM; V_n)$. We also display the trend of the upper bound of the Kolmogorov $n$-width $d_n^\worstcase(\cM)$ predicted by \cref{thm:kolmo-width-poisson}.}
    \label{fig:forward_decay}
\end{figure}

In \cref{fig:forward_decay} we can see this behaviour in practice.
The form in which we have formulated the weak greedy algorithm assumes that, for each $\theta\in \Theta$, we approximate $u(\theta)$ with the best approximation $P_{V_n}(u(\theta))$. This is not true since in practice we compute the element $\hat u_n(\theta)\in V_n$ which is the solution to the Galerkin projection from \eqref{eq:vf-galerkin} when we use $V_n$ as the finite dimensional space. Consequently, the weak greedy algorithm reads
\begin{equation}
    \theta_1 \in \{ \theta \in \Theta\cond \Vert \hat u_n(\theta) \Vert_V \geq \gamma \sup_{\theta\in \Theta} \Vert \hat u_n(\theta) \Vert_V  \},
    \quad \text{and} \quad
    u_1 \coloneqq \hat u_n(\theta_1)
\end{equation}
and for $n>1$,
\begin{equation*}
    \theta_n \in \{ \theta \in \Theta \cond \Vert u(\theta)-\hat u_{n-1}(\theta) \Vert_V \geq \gamma \sup_{\theta\in \Theta} \Vert u(\theta)-\hat u_{n-1}(\theta) \Vert_V  \},
    \quad \text{and} \quad
    u_n \coloneqq \hat u_n(\theta_n)
\end{equation*}
For the Poisson problem \eqref{eq:vf}, errors of the type $\Vert u(\theta)-\hat u_{n-1}(\theta) \Vert_V$ are equivalent to residual computations which can be computed in an efficient way. Depending on the nature of the PDE, this replacement may be more or less difficult to do. We will not dive further into this topic for the sake of brevity.

\corr{As mentioned before, in practice, the optimization over $\cM$ in \eqref{eq:vanilla-greedy-n1} and \eqref{eq:vanilla-greedy-n} is done over the finite training set $\cM^K$. In principle, the paradigm of the weak greedy algorithm can account for the inaccuracies introduced by working on a finite amount of samples (the parameter $\gamma$ of the algorithm degrades and becomes smaller). However, to rigorously relate the amount of samples to $\gamma$, in principle one needs training sets that are an $\eps$-covering of $\cM$ (for $\eps>0$ taken as small as the final accuracy of the application requires, see e.g.~\cite{MMT2016} for a discussion about this). Depending on $\cM$, the amount of samples may become quickly very large which is also connected to the so-called covering numbers. Working with sets that are not $\eps$-coverings is very challenging, and very little is understood in this respect. We refer to \cite{CDDN2020} for a work in which this aspect is studied.}


\subsection{Nonlinear Strategies}
\label{sec:fwd:nonlinear}
We say that a strategy is nonlinear when the approximation of $\cM$ is no longer done with linear spaces $V_n\subset V$. We first motivate why such approaches are needed, and then we briefly explain some strategies that are currently being investigated in the literature. \corr{The topic of nonlinear model order reduction is currently attracting a lot of attention, and numerous different ideas are being explored. We consider that this is an advanced topic which goes a bit beyond the purpose of this introductory tutorial. Therefore we only provide the main ideas, and give a few examples of existing strategies. We refer to \cite{HPU2026} for a recent extensive review on this topic.}

\subsubsection{Why do we need nonlinear Model Order Reduction?}

As we have seen in the previous sections, linear reduced models can bring a significant reduction of complexity for elliptic problems thanks to the fact that the Kolomogorov $n$-width decays exponentially fast (see \cref{thm:kolmo-width-poisson}). The situation is quite different for problems transporting discontinuities that vary together with the parameters. This typically occurs for hyperbolic PDEs when parameters influence the transport velocity and therefore the positions of shocks. To illustrate this, we recall a toy example given in \cite{BCMN2018}. We consider the univariate transport equation
\begin{equation}
    \partial_t u(\theta)+\theta \partial_x u(\theta)=0, \quad x\in\R, \; t\geq 0,
    \label{eq:transport}
\end{equation}
with the characteristic function as initial condition $u_0(x)=\indicative_{]-\infty,0]}$, and the velocity as the parameter of interest $\theta\in [0,1]$. We consider the parametrized family of
solutions at time $t=1$ restricted to $x\in [0,1]$, that is
\begin{equation}
    \label{eq:manifold-indicative-fun}
    \cM = \{ u(\theta)(x)=\indicative_{[0,\theta]}\; : \; \theta\in [0,1]\}.
\end{equation}
In \cref{fig:transport} we can see different profiles for $u(\theta)(x)$ for various values of $\theta$.

Here, we choose to work in the Hilbert space $V=L^2([0,1])$ which contains such discontinuous functions.
In this context, the covariance operator $C$ (from \cref{thm:svd-ppdes}) has its spectral decomposition given by the Karhunen–Loève orthonormal basis of eigenfunctions $f_k$ which, in this case, correspond to the Fourier series associated to the interval $[0, 1]$, that is $f_k^s=2\sin(2k\pi x)$ and $f_k^c=2\cos(2k\pi x)$. The associated eigenvalues, due to the effect of the discontinuity, decrease proportional to $k^{-1}$ thus even if we build our reduced basis with the first $n$ Fourier terms (which correspond to $V^{\text{opt}}_n$) we get an approximation error proportional to $n^{-1/2}$
\begin{equation}
    \sup_{u\in\cM} \|u-P_{V_n}u\| \sim n^{-1/2}.
\end{equation}
which also implies that $d_n^{\averagecase}(\cM) \sim n^{-1/2}$.

On the other hand, a linear reduced space generated by the greedy algorithm is of the form
\begin{equation}
    V_n = \vspan\{ \indicative_{[0,\theta_i]} \; : \, i=1,\ldots, n \}
\end{equation}
for some points $\theta_1,\dots,\theta_n\in [0,1]$, and therefore it is equal to the space of piecewise constant functions on the intervals $[\theta_i,\theta_{i+1}]$,
assuming that these points have been increasingly ordered.
By taking a $\theta$ to be the midpoint of the largest of such intervals, that has length larger than $n^{-1}$, we can check that
\begin{equation}
    \|\indicative_{[0,\theta]}-P_{V_n}\indicative_{[0,\theta]}\| \geq  \frac 1 2 n^{-1/2},
\end{equation}
and therefore the approximation rate is not better than with the Fourier space. More generally it can be checked that the Kolmogorov
$n$-width of $\cM$ in $V$ decays like $n^{-1/2}$, that is, any linear approximation method cannot have a better rate (for a complete proof, see \cite[Chapter 3, see equation (3.76)]{BCOW2017}). This rate is too slow to make linear reduced models effective in this case (remember that we want to work with $n$ small). By now, similar results have been obtained for different manifolds of hyperbolic PDEs (see \cite{Welper2020transformed, OR2016}). We also refer to \cite{GK2019} for the same type of result for wave equations. All these results point to the fact that one should in general expect that the convergence rate of the Kolmogorov $n$-width is slow \corr{as soon as the solution has discontinuities or steep gradients}. So we need to shift to nonlinear approximation spaces for model reduction for these PDEs.

\begin{figure}[ht]
    \centering
    \includegraphics[width=0.4\linewidth]{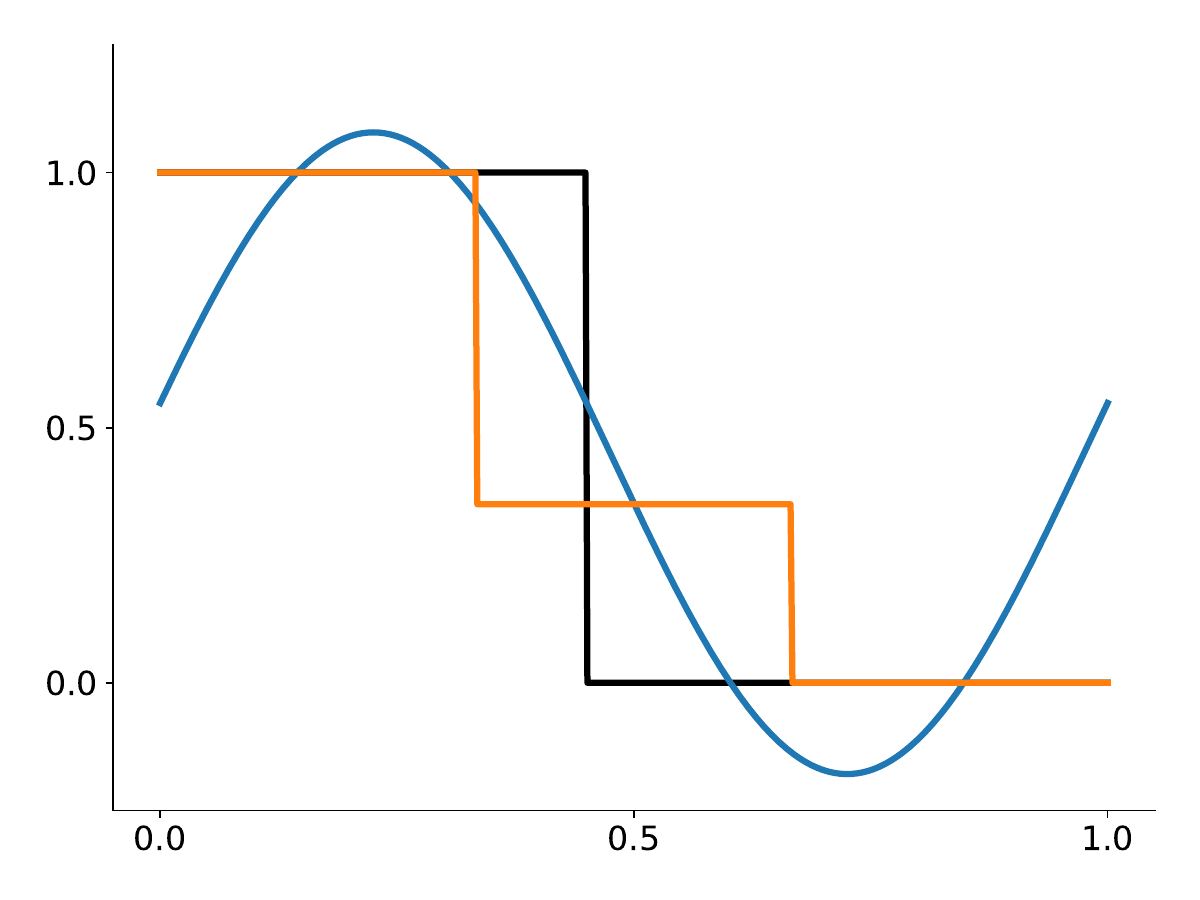}
    \includegraphics[width=0.4\linewidth]{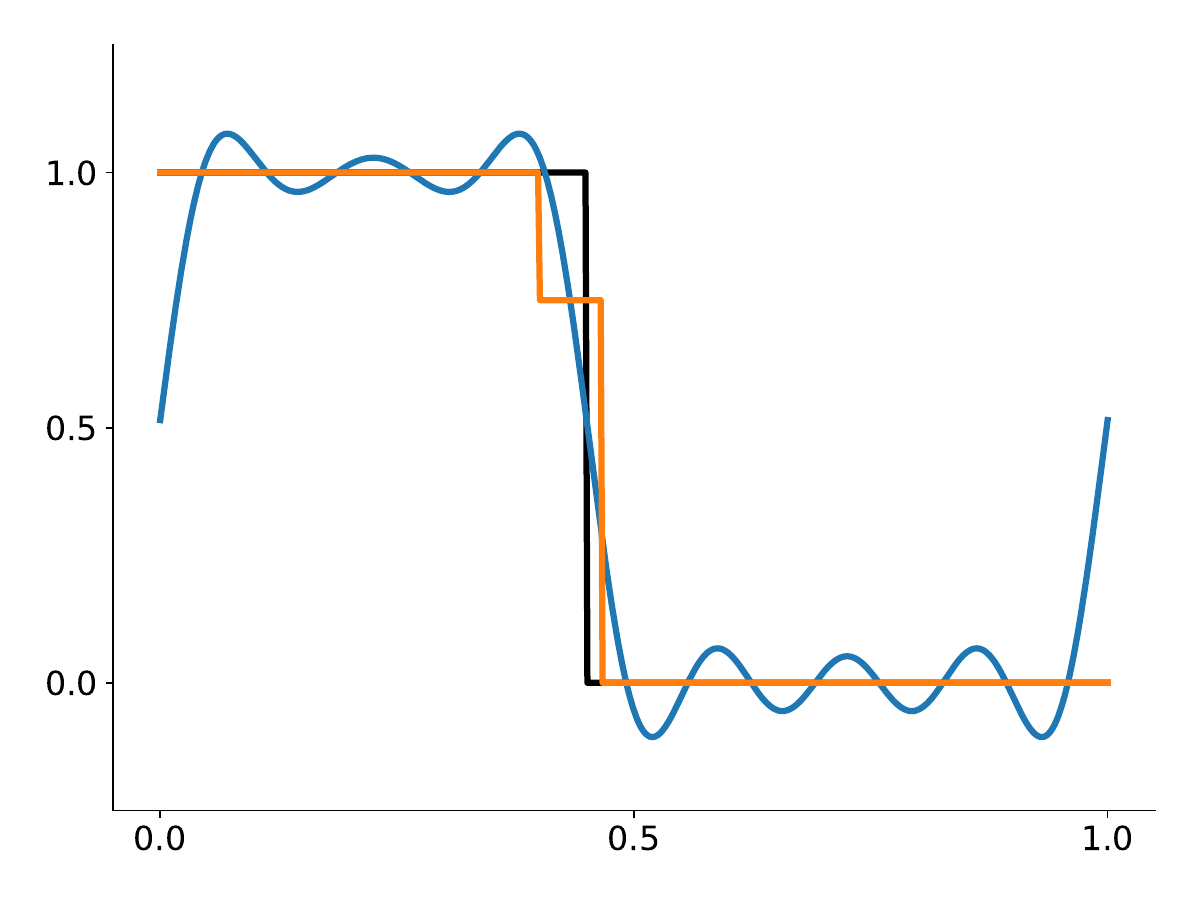}
    \includegraphics[width=0.4\linewidth]{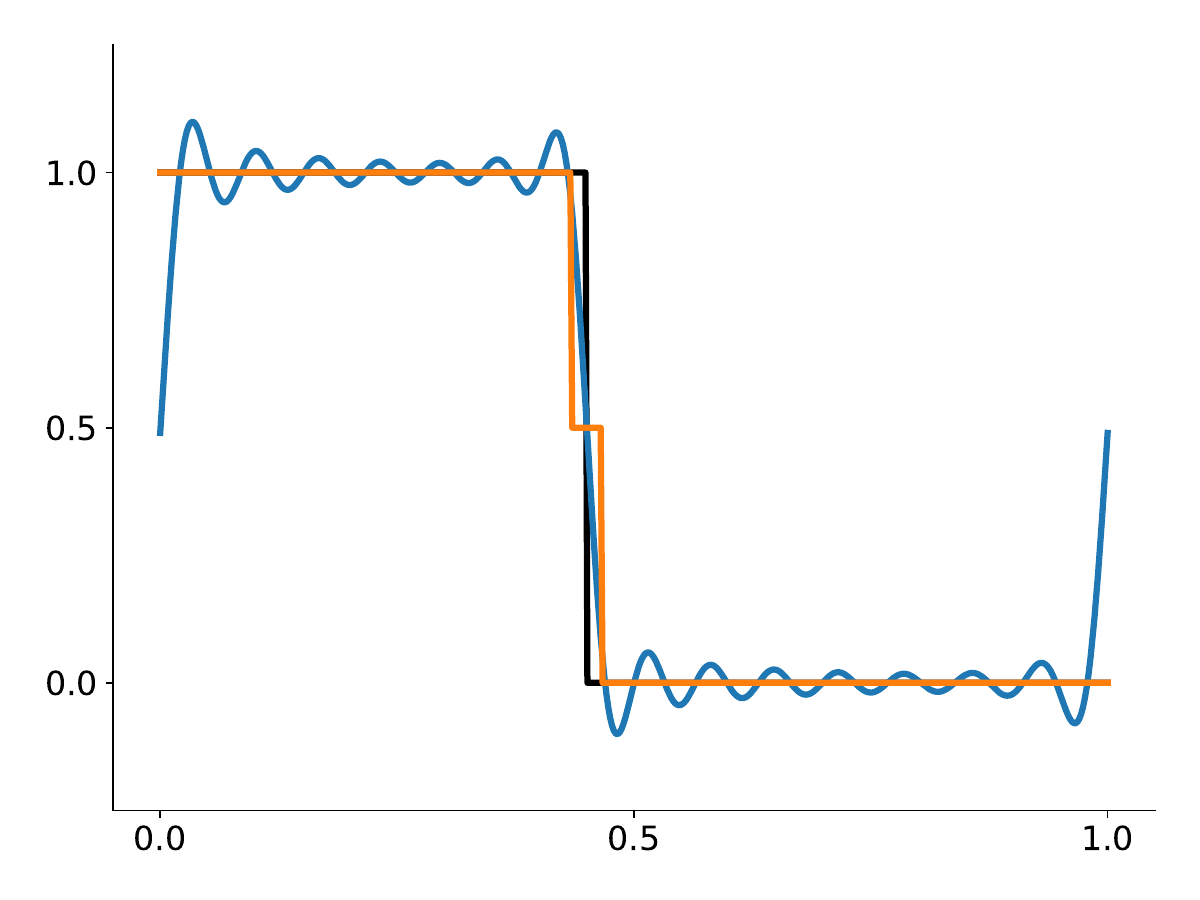}
    \caption{\new{Some MOR methods applied to the manifold $\cM$ of \cref{eq:manifold-indicative-fun} generated by translations of Heaviside functions.} We see in black the profile of $u(\theta)$ which corresponds to the final state ($t=1$) of \eqref{eq:transport} for $\theta=0.45$. We also show the best approximations $P_{V_n}\indicative_{[0,\theta]}$ using the spaces $V_n$ obtained by the greedy method (orange), or the SVD (blue). The dimension changes from $n=3$ (top-left), $n=15$ (top-right) and $n=30$ (bottom). We observe the presence of large oscillations for all methods.}
    \label{fig:transport}
\end{figure}

\subsubsection{Piecewise Affine Approximation and Library Approximation}
\label{sec:fwd:pwa}
If we think how to go beyond a simple linear space, one first idea is to work with piecewise linear approximations. For this, we need to build a partition of the parameter space $\Theta$ into $K$ disjoint subdomains
$$
\Theta=\cup_{i=1}^K \Theta_i, \quad \text{with }\Theta_i\subseteq\Theta \text{, and }\Theta_i\cap \Theta_j = \emptyset \text{ if $i\neq j$},\; \forall (i,j)\in\{1,\dots, K\}^2.
$$
This partition induces a partition of the manifold $\cM$,
$$
\cM = \cup_{i=1}^K \cM_i, \quad \cM_i\coloneqq u(\Theta_i),\; \text{ and } \cM_i \cap\cM_j= \emptyset \text{ if $i\neq j$}.
$$
For each $i$, we approximate $\cM_i$ with a linear space $V_{n_i}^i$ of dimension $n_i$. Denoting $\cL\coloneqq \{V_{n_1}^1,\dots, V_{n_K}^K\}$, the piece-wise parameter-to-solution mapping is of the form
\begin{align*}
    \label{eq:widehat-u-pw}
    \widehat u_n^{\pw}:\Theta \to \cL.
\end{align*}
In our Poisson model problem, if we want to approximate $u(\theta)$ for a given $\theta\in \Theta_i$, we solve the Galerkin problem \eqref{eq:vf-galerkin} with the subspace $V_{n_i}^i$.

The main question then is how to build the partition and what is the quality criterion to judge whether one is better than another. We refer to \cite{GM2024} for a discussion on this matter, and also for an overview of various strategies, and how difficult they are to implement. In essence, the piece-wise affine approach is easy to use in place when $\Theta$ is a Cartesian product domain. We can then develop a partition strategy based on dyadic splittings (we refer to
\cite{EPR2010,MS2013,BCDGJP2021, DCAKR2022, GJ2024} for works in this direction). The construction is not that straightforward as soon as we want to consider either generic shapes for each partition, or very general domains which are not necessarily Cartesian products. In \cite{GM2024}, an approach for general domains was introduced, and the way to store the information is based on a tree structure of the different subspaces $V_n^k$ that are built.

The idea of working with several subspaces belongs to a broader class of approximation methods called library or dictionary approximation. We again refer to \cite{GM2024} for a description of the main concepts.

\subsubsection{\corr{Beyond piece-wise affine strategies: nonlinear parametric approximations}}
The main idea to go beyond piece-wise affine methods relies on parametric mappings (also called decoder mappings) of the form
\begin{equation}
    \label{eq:decoder}
    \varphi: W \to V,
\end{equation}
where $W$ is a space of parameters that are amenable to manipulate in practice. These parameters are not to be confused with the parameters $\theta\in \Theta$ from the PDE. Thus, to avoid confusion in the terminology, we will refer to the elements $w\in W$ as the weights, and to $W$ as the space of weights. In full generality, $W$ is a Riemannian manifold of dimension $n$ however the most common choice is just the flat space $W=\bR^n$. The image set
\begin{equation}
    V_n \coloneqq \varphi(W)=\{ \varphi(w) \cond w \in W \} \subseteq V
\end{equation}
is a subset of $V$ parametrized by the elements $w\in W$. In general $V_n$ is not going to be a linear subspace of $V$ as in our earlier discussion unless we make specific choices on $\varphi$. However, we overload the notation in this part of the text because we are building a generalization of the previous construction in which the case of $V_n$ being a linear subspace happens for particular choices of $\varphi$.

Most common approximation families can be expressed as a parametric mapping. Here are some examples:
\begin{itemize}
    \item \textbf{Polynomial-type approximations:} The first important class of parametric approximations are those of polynomial type. Choosing $W=\bR^n$, we consider a polynomial ansatz
    \begin{equation}
        \label{eq:poly decoder}
        \varphi^{\poly}(w) = \sum_{\nu\in E } w^\nu v_\nu, \quad \forall w=(w_1,\dots,w_n)^\top \in \bR^n,
    \end{equation}
    where $\nu = (\nu_1,\dots, \nu_n)\in \bN_0^n$ is a multi-index of order $\abs{\nu}=\nu_1+\dots+\nu_n$ and
    $$
    w^\nu =w_1^{\nu_1}\dots w_n^{\nu_n},
    $$
    and the set $E$ is a finite set of multi-indices of $\bN_0^n$. The index set $E$ and the functions $v_\nu \in V$ are chosen a priori. This construction generates the approximation space
    $$
    V_n^{\poly} = \varphi^{\poly}(W) = \set{\varphi^{\poly}(w) \cond w\in W },
    $$
    which is of dimension $\dim(V_n^{\poly})=n$. We remark that this approximation class includes the important case of linear combinations of functions, since we can choose $E = \set{e_i}_{i=1}^n$, where the $e_i$ are the canonical vectors of $\bR^n$. With this choice, we obtain
    \begin{equation}
        \label{eq:poly decoder lin}
        \varphi^{\lin}(w) = \sum_{i=1}^n w_i v_i,
    \end{equation}
    Observe that choosing the functions $v_i$ as hat functions allows us to express finite-element-type approximations in the current framework. Also, we can express the linear approximation with the Greedy basis by choosing the $v_i$'s as the solutions $u(\theta_i)$ selected by the greedy algorithm (and similarly for the SVD basis).
    \item \textbf{Neural networks:} An important instance of a fully nonlinear parametric mapping are neural networks. The most basic instance of this family are Shallow Neural Networks (SNN) which are mappings of the form
    \begin{equation}
        \label{eq:SNN decoder}
        \varphi^{\SNN}(w) = \sum_{i=1}^k c_i \sigma(a_i \cdot + b_i),
    \end{equation}
    where $w = \prt*{(a_i,b_i,c_i)}_{i=1}^p$ and \corr{$(a_i,b_i,c_i)\in \bR^d\times \bR \times \bR$}. Consequently $W= \bR^{n}$ with \corr{$n=k(d+2)$}. The function \corr{$\sigma:\bR \to \bR$ is the so-called activation function (typical choices are the ReLU function, sigmoid, hyperbolic tangent, etc.). $\sigma$ needs to be nonlinear to guarantee density properties as $k\to \infty$ with respect to classical ambient Hilbert spaces $V$ encountered in PDE theory.}.
    This construction generates the fully nonlinear approximation set
    $$
    V_n^{\SNN} = \varphi^{\SNN}(W) = \set{\varphi^{\SNN}(w) \cond w\in W }.
    $$
\end{itemize}
Once a parametric mapping $\varphi$ is chosen, the main idea to do model order reduction is to consider that the weights $w\in W$ depend on the physical PDE parameters $\theta\in \Theta$. In this way, one approximates the exact solution $u(\theta)$ with $\varphi(w(\theta))$. Therefore, here we have
\begin{align}
    \hat u_n: \Theta & \to V                                         \\
    \theta           & \mapsto \hat u_n(\theta) = \varphi(w(\theta))
\end{align}
The main question then becomes how to build numerically efficient strategies to map from a physical parameter $\theta$ to an appropriate weight $w\in W$. For this, one key idea is to resort once again to nonlinear parametric approximations to represent the parameter-to-weight mapping, and to use supervised learning techniques to train the weights.

Fully nonlinear parametric mappings are the subject of very active research, and numerous
strategies are currently being proposed and studied. For an extensive overview on all currently existing techniques, we refer to \cite{HPU2026}. Below, we only give two examples of such strategies:

\begin{itemize}
    \item \textbf{Nonlinear Compressive ROM:}
    As we have seen in \cref{sec:fwd:linear}, quasi-optimal linear spaces $V_n=\vspan\{v_i\}_{i=1}^n$ are built either by SVD (in the average case), or by a greedy algorithm (in the worst case). To enhance the accuracy of these spaces, here is a strategy proposed in \cite{CFSM2023}. If we accept to pay the price of computing the SVD or the greedy algorithm up to a larger dimension $N>n$, we can then consider an ansatz of the form
    $$
    \widehat u_n(\theta) = \sum_{i=1}^n \new{w_i}(\theta) v_i + \sum_{i>n}^N \psi_i(\new{w_1(\theta),\dots, w_n(\theta)}) v_{i}.
    $$
    In the case of the Poisson example, we can obtain the weights $w_i(\theta)$ by solving the Galerkin problem \eqref{eq:vf-galerkin} with the subspace $V_n$ of small dimension.
    We can then use a supervised learning algorithm to train the mappings $\psi_i:\bR^n\to \bR$ for $i=n+1,\dots,N$. These mappings can be searched in an approximation class generated by polynomials, neural networks or any other type of parametric function family (see \cref{fig:transport_nonlinear} for an example).

    \begin{figure}[ht]
        \centering
        \includegraphics[width=0.6\linewidth]{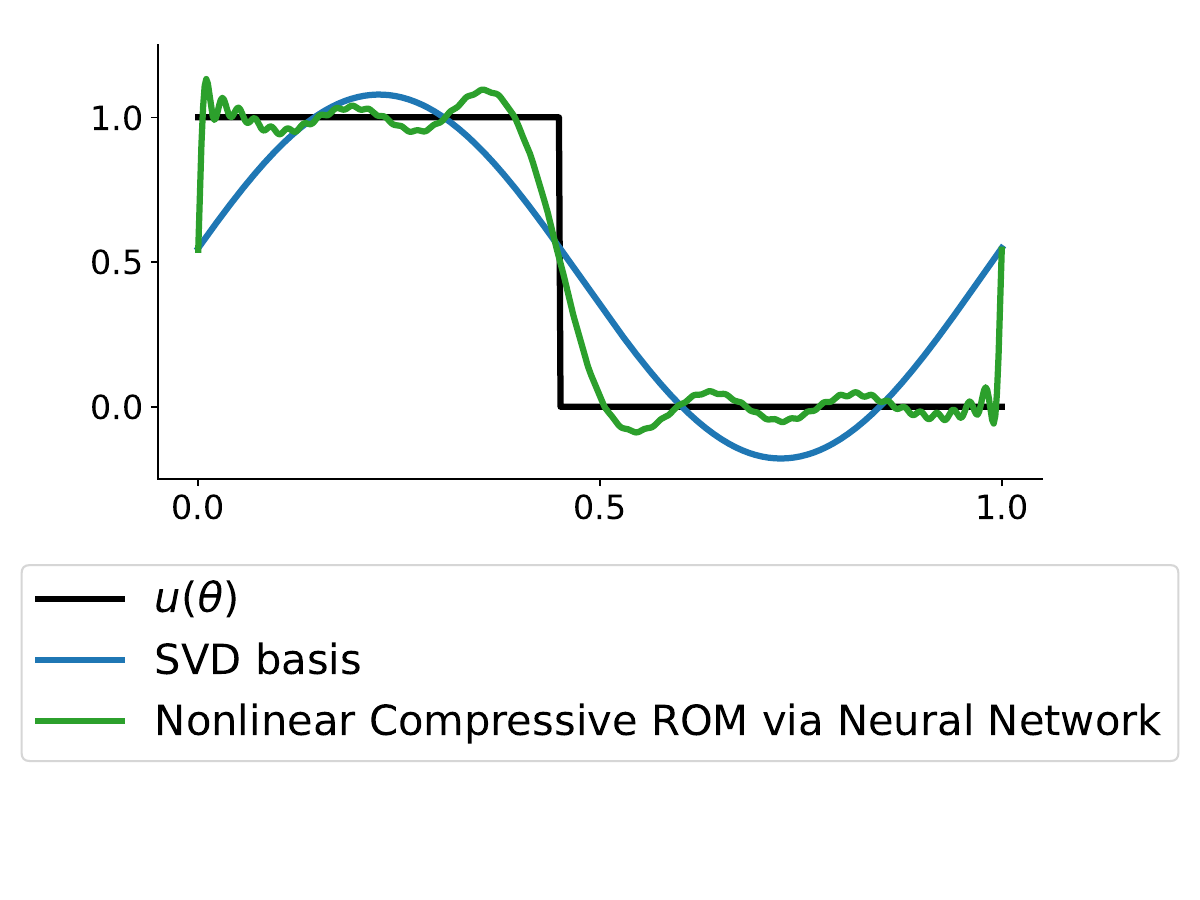}
        \caption{Black line: the function $u(\theta)$ which corresponds to the final state ($t=1$) of \eqref{eq:transport} for $\theta=0.45$. Blue line: the best approximation obtainable by an SVD reduced basis of dimension $n=3$. Green line: a nonlinear method as in \cite{CFSM2023} using a neural network to learn the mappings $\psi_i$.}
        \label{fig:transport_nonlinear}
    \end{figure}

    \item \textbf{Quadratic Manifold:}
    \label{sec:fwd:quadratic}
    A somewhat complementary approach to the nonlinear compressive strategy is the so-called quadratic manifold approach. This idea finds its roots in works from structural mechanics such as \cite{JTRR2017}. It was then taken as a starting point in \cite{GWW2023} (and many subsequent works such as \cite{GBW2023}) to develop strategies revolving around the ansatz
    $$
    \widehat u_n(\theta) = \sum_{i=1}^n w_i(\theta) v_i + \sum_{i,j=1}^n w_i(\theta) w_j(\theta) v_{i,j}.
    $$
    Like in the nonlinear compressive strategy, the spirit is to enhance the accuracy of the linear space $V_n=\vspan\{v_i\}_{i=1}^n$ obtained by SVD or with the greedy algorithm. For this, in the quadratic manifold approach we search for the best functions $v_{i,j}\in V$ such that an expansion involving quadratic terms $w_i(\theta)w_j(\theta)$ gives a good accuracy. If we find such $v_{i,j}$, then the computation boils down to estimating only the $n$ terms $w_i(\theta)$ for $i=1,\dots,n.$
\end{itemize}

\subsubsection{\corr{Beyond piece-wise affine strategies: transformations}}
\label{sec:fwd:fully-nonlinear}

In addition to fully nonlinear parametric approximations, another powerful paradigm in nonlinear model reduction is based on the use of transformations. The underlying idea is that, although the solution manifold associated with a parametric PDE may be poorly approximated by linear spaces, it may become much more amenable to reduction after a suitable change of variables. More precisely, one seeks an approximation of the form
$$
u(\theta) \approx T\bigl(\hat u_n(\theta)\bigr),
$$
where $T:V\to V$ is a transformation and $\hat u_n(\theta)$ approximates a transformed representation of the solution. The transformation $T$ may be prescribed from prior knowledge of the governing equations or inferred directly from data. Its purpose is to factor out the dominant nonlinear effects such as transport, moving interfaces, or geometric deformations. This way, it is expected that the transformed solution manifold
\corr{
    $$
    T^{-1}(\cM)
    :=
    \prt*{
        T^{-1}(u(\theta))
        :
        \theta\in\Theta
    },
    $$
}
has significantly smaller Kolmogorov $n$-widths than the original solution manifold $\cM$. Linear reduction techniques, or closely related approximation methods, are then applied to the transformed problem. Several important instances of this idea are described below.

\begin{itemize}
    \item \textbf{Physics-informed transformations:}
    A natural way of constructing the transformation $T$ is to exploit prior knowledge about the underlying dynamics. Typical examples arise from group actions such as translations, rotations, or other symmetries of the problem. In transport-dominated PDEs, this idea has been used to capture advective phenomena by choosing $T$ according to the transport dynamics generated by the governing equations (see, e.g., \cite{OR2013}). More sophisticated variants combine linear approximation techniques with nonlinear transport maps evolving along characteristic curves, thereby factoring out the dominant transport behavior before applying model reduction (see, e.g., \cite{Welper2017-TSI, Welper2020transformed, RPM2023, RW2026}).

    \item \textbf{Data-driven registration and alignment methods:}
    A closely related family of approaches consists of inferring the transformation $T$ directly from data. These methods are often equation-agnostic and are inspired by image registration techniques. Their objective is to align coherent structures, interfaces, shocks, or other dominant features across parameter values or time instances, thereby transforming the solution manifold into a representation that is more amenable to linear approximation. In this setting, the transformation is not prescribed from the governing equations but learned from the snapshots themselves (see \cite{Taddei2020, FTZ2022, Blickhan2024}).

    \item \textbf{Spaces of measures and general metric spaces:}
    Nonlinear approximation methods also arise naturally when the ambient solution space itself lacks a linear structure. A prominent example is provided by gradient flows in Wasserstein spaces, where linear combinations of solutions are generally not meaningful. In such settings, model reduction must inherently rely on nonlinear constructions. One successful strategy consists in representing $\hat u_n(\theta)$ as a Wasserstein barycenter of suitably selected snapshots. In one spatial dimension, this can be achieved through a physics-informed transformation based on inverse cumulative distribution functions, which maps the problem into a space where convex combinations become meaningful (see \cite{ELMV2020, BBEELM2023}). Extensions to higher-dimensional Wasserstein spaces have also been developed, although they no longer rely on such a transformation-based linearization (see, e.g., \cite{DDE2023, DFM2025}). Another related approach consists of constructing reduced models through the approximation of push-forward maps in combination with registration approaches, as proposed in \cite{Blickhan2024}. We also refer to \cite{IL2014} for an early work leveraging ideas from Optimal Transport to develop physics-informed transformations.
\end{itemize}

\subsection{Further reading}

There are by now several books about forward ROM which cover particularly well the case of linear approximation, see \cite{HRS2015, QMN2016, BCOW2017}. These references also explain how to apply ROM to problems such as uncertainty quantification and optimal control. There are plenty of more advanced concepts that we have not covered in our discussion, and which are sometimes not fully covered either in these books. Some relevant ones are:
\begin{itemize}
    \item How to reliably, and efficiently work with residuals in the weak greedy algorithm This question is very related to a good choice of a weak formulation of the problem at hand. In presence of nonlinearities, we need to involve further steps, usually known under the name of hyperreduction methods. The Empirical Interpolation Method is probably the most well-known approach, see \cite{BMNP2004}.
    \item In time-dependent problems, it is often interesting to consider that the manifold set of solutions depends on time. We would thus have to work with $\cM(t)$, and in this case one has to compute reduced models $V_n(t)$ that depend on time. One challenge is about how to make the basis evolve in time, and there is a lot of activity in this direction. We refer to \cite[Section 4.3]{HPR2022} for an overview. \corr{Another relevant approach is the so-called POD-Greedy method, see \cite{haasdonk2013convergence}.}
    \item Many problems come with important structural properties. Examples are conservation laws, symmetries, symplecticity, reversibility and invariants of motion. A topic of vibrant research at the moment is related to building reduced models $V_n$ that preserve such properties, see \cite[Section 4.2]{HPR2022} or \cite{ELMV2020}.
\end{itemize}
\new{Concerning nonlinear model reduction methods, we refer to \cite{HPU2026} for an extensive overview of current research.} Last but not least, for training on how to implement advanced ROM concepts, we refer to \cite{RBSP2024} which is a book that offers an extensive collection of worked out implementation problems with increasing complexity from the field of computational mechanics.


\section{Inverse State Estimation}
\label{sec:inverse}
\emph{Inverse Problems} occur when the parameter $\theta$ is not given, and, instead, we only observe a vector of measurements
\begin{equation*}
    z=(z_1,\dots,z_m)\in \bR^m, \quad z_i=\ell_i(u),\quad i=1,\dots,m,
\end{equation*}
where each $\ell_i\in V'$ is a known continuous linear functional on $V$, and we assume that the set of $\{\ell_i\}_{i=1}^m$ are linearly independent. We will sometimes use the notation
\begin{equation*}
    z=\ell(u), \quad \ell=(\ell_1,\dots,\ell_m).
\end{equation*}
There are two main types of inverse problems:
\begin{enumerate}
    \item
    \emph{State estimation:} The task is to recover an approximation $u^*$ of {the state} $u\in V$ from the observation {$z=\ell(u)$}. Since we assume that $V$ is high or infinite dimensional, there are in general many $u$ that satisfy $\ell(u)=z$ so we decrease this ``uncertainty'' by adding extra assumptions. In our case, we assume that $u$ belongs to the manifold $\cM$.

    State estimation is a linear inverse problem in the sense that the forward map $\ell:u\mapsto \ell(u)$ is linear. It is however challenging because the target $u$ lives in an infinite dimensional space $V$. The additional assumption that $u\in \cM$ eases the problem only to some extent. On the one hand, it means that $u$ is in a manifold that can be parametrized with $p$ parameters. Since we receive $m$ observations, recovering $u\in \cM$ from $z$ is an ill-posed problem as soon as $p>m$. This ill-posed setting is the one that we are often implicitly considering since it corresponds to the situation in which the physical model is complex ($p$ is large), and we only observe it with a few sensors ($m$ is moderate). On the other hand, the membership of $u$ to $\cM$ makes it difficult to recover $u$ even when the problem is not ill-posed. This is because $\cM$ has a complicated geometry, which is only partially known to us by solving forward problems $\theta\mapsto u(\theta)$ for different values of $\theta\in\Theta$.
    \item
    \emph{Parameter estimation:} Here the goal is to recover an approximation $\theta^*$ of the parameter $\theta$ from the observation {$z = \ell(u)$ when $u=u(\theta)$}. This is a nonlinear inverse problem, for which the prior information available on $\theta$ is given by the domain $\Theta$.
\end{enumerate}

In the above description, we have assumed that the observations are noiseless, that the $\ell_i$ are perfectly known, and that the physical model is perfect so that we have $u\in \cM$. Of course, in general, none of these three assumptions are true:
\begin{itemize}
    \item Measurements are usually affected by noise. To account for this, we need to add an extra modeling assumptions. In general, one assumes additive noise
    \begin{equation*}
        z=\ell(u)+\eta,
    \end{equation*}
    and $\eta\in \bR^m$ is a vector of random variables that follows a certain probability distribution. One then needs to decide which probability distribution this is, and whether the sensors' noise is correlated or uncorrelated.
    \item The sensors' response is usually not completely known which means in turn that the functionals $\ell_i$ do not fully describe the measurment process. One can account for this lack of knowledge by adding an extra additive noise term.
    \item The physical model in general does not fully capture the behaviour of the real system so the true observed state is an element $u\in V$, and it may deviate from the solution manifold $\cM$ by a considerable amount.
\end{itemize}
In the following, we place ourselves in the idealized setting without modeling errors and measurement noise. We do this for clarity of exposition, but we emphasize that one can easily incorporate these inaccuracies in the theory and the algorithms that we present below.


\subsection{Optimal Algorithms for State Estimation}
Since we are assuming that we have no noise, our observations are of the form
\begin{equation}
    \label{eq:observations}
    z_i = \ell_i(u), \quad i=1,\dots,m.
\end{equation}
\new{We denote} by $\omega_i\in V$ the Riesz
representers of the $\ell_i$, such that $\ell_i(v)=\inner{\omega_i,v}$ for all $v\in V$ (see \cref{fig:riesz}), and we define the observation space
\begin{equation*}
    \Wm:={\rm span}\{\omega_1,\dots,\omega_m\}\,\subset V.
\end{equation*}
\new{To avoid any possible confusion, we emphasize that in this section $\Wm$ has a totally different meaning with respect to the space of weights $W$ from \cref{eq:decoder} for parametric approximations. Using the observation space $\Wm$,} the measurement data $z = (z_1,\dots, z_m)^T$ can equivalently be represented by
\begin{equation*}
    \omega=P_\Wm u,
\end{equation*}
where $P_\Wm$ is the orthogonal projection from $V$ onto $\Wm$ (see \cref{fig:riesz_projection}). This equivalence comes from the fact that we can write
$$
P_\Wm u = \sum_{i=1}^n c_i \omega_i,
$$
for some coefficients $c_i\in \bR$. Knowing the measurement data allows us to write
$$
z_i = \ell_i(u) = \left< \omega_i, u\right> = \left< \omega_i, P_\Wm u\right> = \sum_{j=1}^m c_j \left< \omega_i, \omega_j \right>,\quad \forall i=1,\dots, m.
$$
Thus the vector of coefficients $c =(c_1,\dots,c_m)^T$ is the unique solution to the linear system
$$
\texttt{B} \,c = z
$$
where
$$
\texttt{B} \coloneqq ( \left< \omega_i, \omega_j \right>)_{1\leq i, j \leq m}
$$
is an invertible matrix because the $\{ \omega_i \}_{i=1}^m$ are linearly independent (because their associated linear functionals $\{ \ell_i \}_{i=1}^m$ are assumed to be independent). Therefore knowing $z$ is equivalent to knowing $c$, and also $w =P_\Wm u$.

\begin{remark}
    Given a linear functional $\ell_i\in V'$, its Riesz representer $\omega_i$ is defined by the variational problem:
    \begin{center}
        Find $\omega_i\in V$ s.t.~$\inner{\omega_i, v} = \ell_i(v),\;\forall v\in V.$
    \end{center}
    In practice, one can find an approximation of $\omega_i$ by Galekin projection. We choose a finite dimensional space $V_h$ (usually, a finite element space), and then the problem is:
    \begin{center}
        Find $\omega^{(h)}_i\in V_h$ s.t.~$\inner{\omega^{(h)}_i, v_h} = \ell_i(v_h),\;\forall v_h\in V_h.$
    \end{center}
    For example, if $V=L^2(\Omega)$, and if $\ell_i(v)\coloneqq \int v(x)e^{-\frac{(x-x_i)^2}{2\sigma^2}} \mathrm{d} x$, then we can directly identify $\omega_i = e^{-\frac{(x-x_i)^2}{2\sigma^2}}$. If $V=H^1_0(\Omega)$, then we have to consider the variational problem
    \begin{center}
        Find $\omega_i\in H^1_0(\Omega)$ s.t.~$\int_\Omega \nabla \omega_i \cdot \nabla v = \int v(x)e^{-\frac{(x-x_i)^2}{2\sigma^2}} \mathrm{d} x,\;\forall v\in H^1_0(\Omega).$
    \end{center}
    \cref{fig:riesz} gives a visual illustration of this example.
\end{remark}

\begin{figure}[ht]
    \centering
    \includegraphics[width=0.2\linewidth]{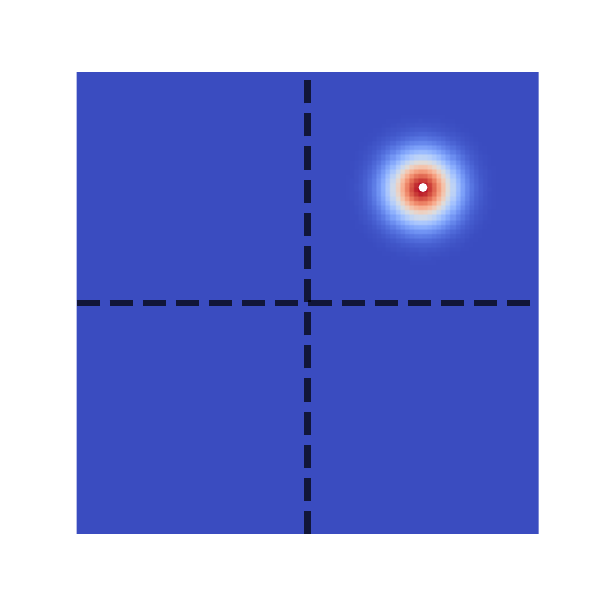}
    \includegraphics[width=0.2\linewidth]{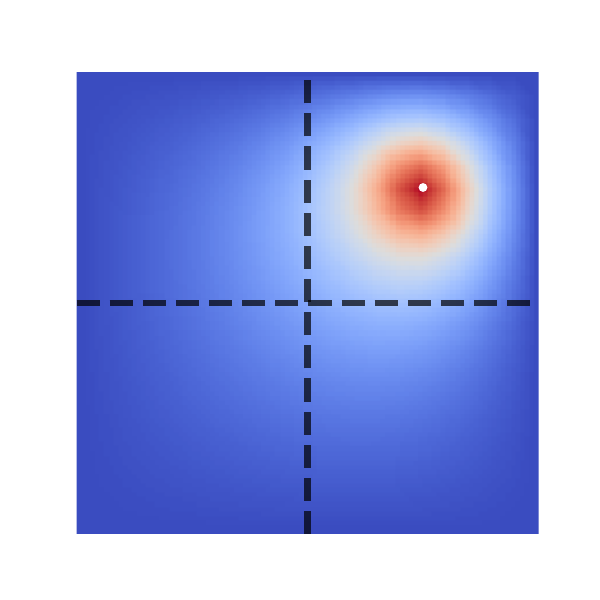}\\
    \includegraphics[width=0.2\linewidth]{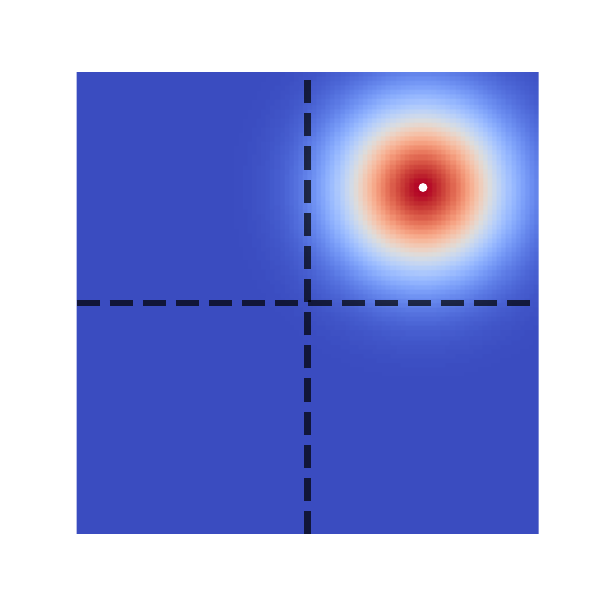}
    \includegraphics[width=0.2\linewidth]{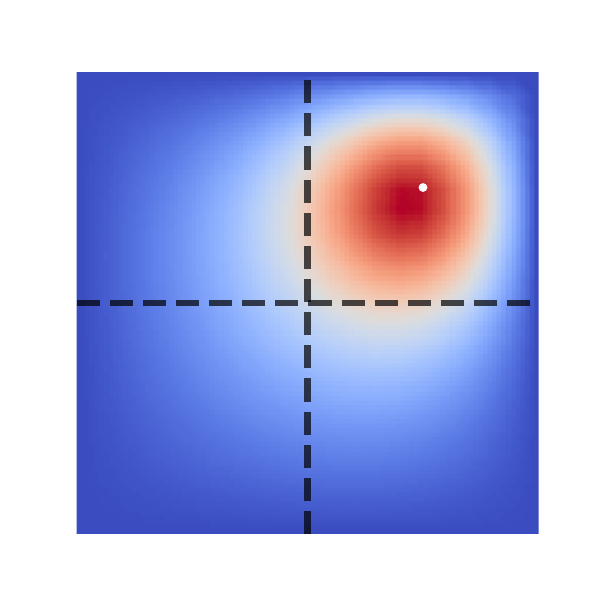}
    \caption{Riesz representer $\omega_i$ if we work with $\ell_i(v)=\int v(x)e^{-\frac{(x-x_i)^2}{2\sigma^2}} \mathrm{d} x$ with $x_i=(0.5,0.5)$. Images on the left: $V=L^2([-1,1]^2)$. On the right: $V=H_0^1([-1,1]^2)$. $\sigma=0.1$ (up) and $\sigma=0.25$ (down).}
    \label{fig:riesz}
\end{figure}

\begin{figure}[ht]
    \centering
    \includegraphics[width=0.3\linewidth]{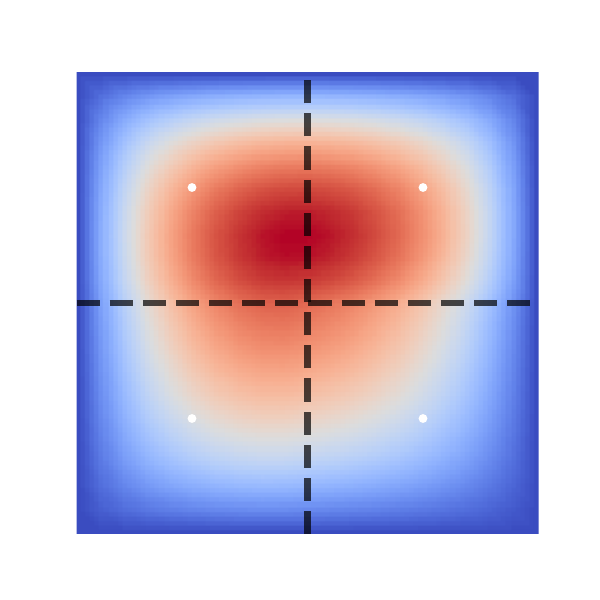}
    \includegraphics[width=0.3\linewidth]{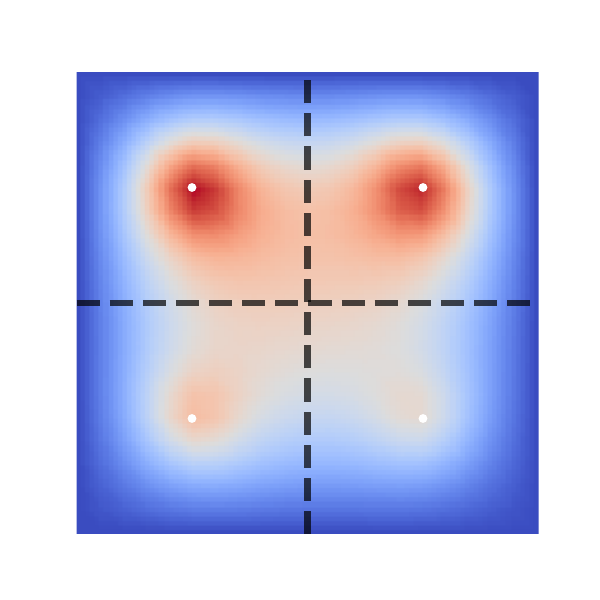}
    \includegraphics[width=0.3\linewidth]{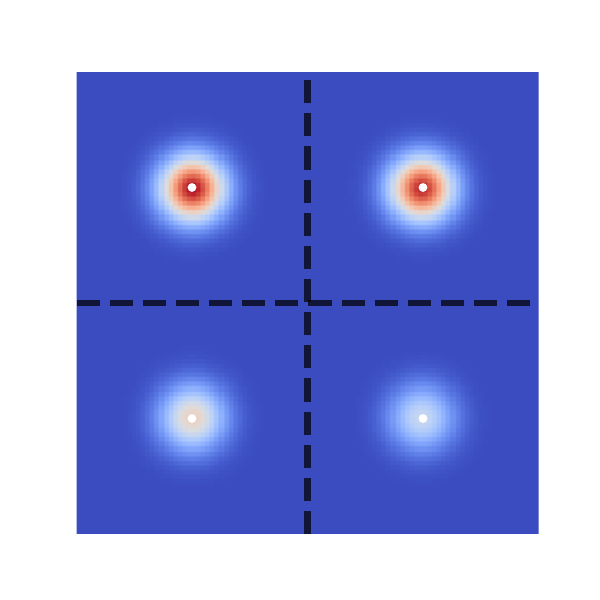}
    \caption{We show (left) the same solution $u$ of \cref{fig:poisson}. For $\ell_i(u)=\int u(x)e^{-\frac{(x-x_i)^2}{2\sigma^2}} \mathrm{d} x$ and $\sigma=0.1$ we see the projection $P_\Wm u$ of the solution $u$ onto the measurement space $\Wm$ for $V=H_0^1([-1,1]^2)$ (center) and $L^2([-1,1]^2)$ (right).}
    \label{fig:riesz_projection}
\end{figure}

Given that the observations can be seen as having been given an element $\omega\in \Wm$, we can view the task of state estimation as building a {\it recovery algorithm} which is defined as a mapping
\begin{equation*}
    A: \Wm \to V.
\end{equation*}
For such an algorithm $A$, the approximation of $u$ is given by
\begin{equation*}
    u^*=A(w)=A(P_\Wm u).
\end{equation*}
Note that, in our terminology, an algorithm $A$ is just a mapping from the observation space $\Wm$ to the ambient space $V$, and we do not attach any notion of practical feasibility to it. We will add this idea in a second stage.

The construction of $A$ should be based on the available prior
information that describes the properties of the unknown $u$, and the evaluation of its quality needs to be defined according to a precise criterion. By following similar lines as for forward reduced modeling, we distinguish two main quality benchmarks:
\begin{itemize}
    \item
    In the {\it worst case setting}, the sole prior information
    is that $u$ belongs to the solution manifold $\cM$ that we defined in equation \eqref{eq:manifold}. The performance of an algorithm $A$ over the class $\cM$ is measured by the ``worst case'' reconstruction error
    \begin{equation*}
        \label{eq:err-wc}
        \rE^\worstcase(\cM,A)\coloneqq\max_{u\in \cM} \norm{u-A(P_\Wm u)}.
    \end{equation*}
    In this case, the problem of finding an $A$ that minimizes $\rE^\worstcase(\cM, A)$ is
    called {\it optimal recovery}. It has been extensively studied for convex sets $\cM$ that are
    balls of smoothness classes \cite{Bojanov1994,MR1977,NW2008} but note that this is not the present case for our solution manifold.
    \item
    In the {\it average setting}, the prior information on $u$ is
    described by a probability distribution $p$ on $V$,
    which is supported on $\cM$. Typically, $p$ is induced by a probability distribution $\rho$ on $\Theta$ that is push-forwarded with the solution map: $p = u\#\rho$. It is then natural to measure the performance of an algorithm
    in an averaged sense, for example through the mean-square error
    \begin{equation*}
        \rE^\averagecase_\rho(\cM, A)\coloneqq\prt*{\int_\Theta \norm{u(\theta)-A(P_\Wm u(\theta))}^2 \rho(\d \theta)}^{1/2}.
        \label{eq:err-ms}
    \end{equation*}
    This average setting is the starting point for {\it Bayesian estimation}
    methods \cite{DS2017}. Let us observe that for any algorithm $A$ one has $\rE^\averagecase_\rho(\cM, A)(A)\leq \rE^\worstcase(\cM,A)$.
\end{itemize}

For both criteria, the best algorithm is the one that minimizes the reconstruction error. The optimal performance that we can achieve is thus
\begin{equation*}
    \delta^\star(\cM) \coloneqq \inf_{A:\Wm\to V} \rE^{\star}(\cM, A) , \quad \text{ with } \star = \{\averagecase, \worstcase\}.
\end{equation*}

For the worst case setting, a simple mathematical description of an optimal map that meets this benchmark was given in \cite{BCDDPW2017} (see also \cite{CDDFMN2020}). To define it, we note that in the absence of model bias and when a noiseless measurement $\omega=P_\Wm u$ is given, our knowledge on $u$ is that it belongs to the set
\begin{equation*}
    \cM_\omega:=\cM\cap (\omega+\Wm^\perp).
\end{equation*}
The set $\cM_\omega$ can be understood as the ``slice'' of the manifold $\cM$ which agrees with a given observation $\omega\in \Wm$. This ``slice'' can be a fully connected set, or composed of non connected sets, and it could even be the empty set depending on $\omega$. We refer to \cref{fig:bird} for a graphical illustration.
\begin{figure}[ht]
    \centering
    \includegraphics[scale=0.4]{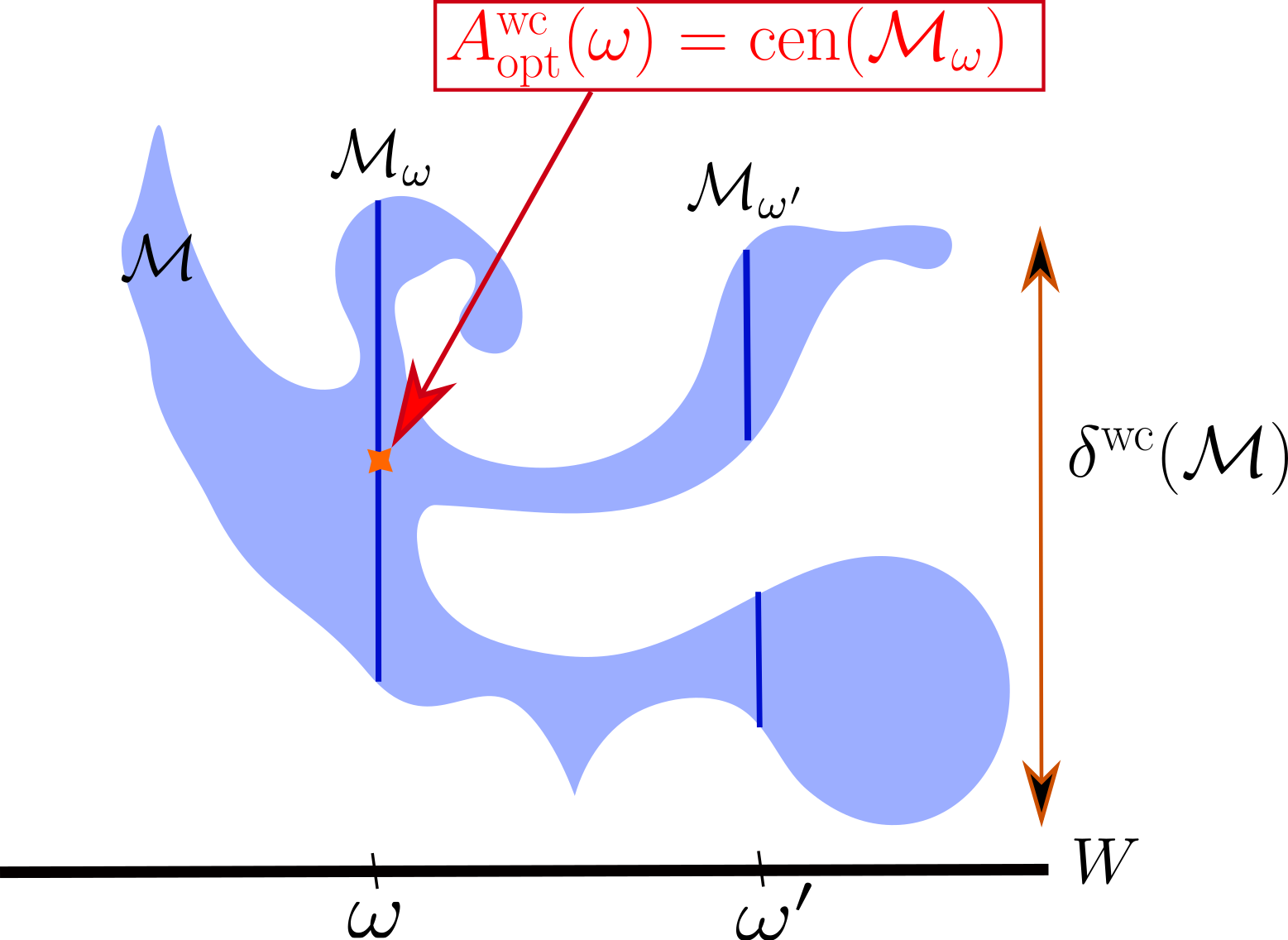}
    \caption{Illustration of the optimal recovery benchmark on a manifold in the two dimensional Euclidean space. Note that sometimes $\cM_\omega$ may be a non connected set as the figure depicts. This does not alter the fact that the best reconstruction is the center of the Chebyshev ball $\cen(\cM_\omega)$. Remark that the center does not necessarily lie in $\cM_\omega$ when the set is not connected.}
    \label{fig:bird}
\end{figure}

The best possible recovery map can be described through the following general notion.

\begin{definition}
    The Chebychev ball of a bounded set $S\in V$ is the closed ball $B(v,r)$ of minimal radius
    that contains $S$. One denotes by $v={\rm cen}(S)$ the Chebychev center of $S$ and
    $r={\rm rad}(S)$ its Chebychev radius.
\end{definition}

In particular
one has
\begin{equation}
    \frac 1 2 {\rm diam}(S)\leq {\rm rad}(S)\leq  {\rm diam}(S),
    \label{radiam}
\end{equation}
where ${\rm diam}(S):=\sup\{\norm{u-v}\cond u,v\in S\}$ is the diameter of $S$.
Therefore, the recovery map that minimizes the worst case error over $\cM_\omega$ for any given $\omega$, and therefore over $\cM$ is
defined by
\begin{equation}
    \label{eq:optimal-A-Cheb-center}
    A^{\worstcase}_{\opt}(\omega)={\rm cen}(\cM_\omega).
\end{equation}
Its associated worst case error is
\begin{equation*}
    \delta^{\worstcase}(\cM)= \sup \{\rad(\cM_\omega)\, :\, \omega\in \Wm\}.
\end{equation*}
Note that the map $A^{\worstcase}_{\opt}$ is also optimal among all algorithms for each slice $\cM_\omega$, where $\omega\in P_\Wm (\cM)$, since
\begin{equation*}
    \rE^{\worstcase}(A^{\worstcase}_{\opt},\cM_\omega)=\min_{A}\rE^{\worstcase}(A,\cM_\omega)=\rad(\cM_\omega), \quad \forall \omega\in P_\Wm(\cM).
\end{equation*}
However, there may exist other maps
$A$ such that $\rE^{\worstcase}(\cM,A)=\delta^{\worstcase}(\cM)$, since we
also supremize over $\omega\in P_\Wm(\cM)$.

In view of the equivalence \eqref{radiam}, we can
relate $\delta^{\worstcase}(\cM)$ to the quantity
\begin{equation*}
    \sigma\coloneqq\sup\{{\rm diam}(\cM_\omega)\,:\, \omega\in \Wm\}=\sup \{\norm{u-v}\; : \; u,v\in \cM, \;u-v\in \Wm^\bot \},
\end{equation*}
by the equivalence
\begin{equation*}
    \frac \sigma 2\leq \delta^{\worstcase}(\cM) \leq \sigma.
\end{equation*}

Note that having injectivity of the measurement map $P_\Wm: \cM \to \Wm$ is equivalent to $\sigma=0$. In other words, when the measurement map is rich enough so that there is only one element $u\in \cM$ that matches the observations, then the optimal algorithm will do a perfect recovery and $\delta^{\worstcase}(\cM)=0$.

More importantly, note that, in practice, the optimal map $A^{\worstcase}_{\opt}$ from \cref{eq:optimal-A-Cheb-center} cannot be easily constructed. Since the solution manifold $\cM$ is a high-dimensional and geometrically complex object, one cannot easily find the Chebyshev center to $\cM_\omega$ for a given measurement $\omega$. One is therefore interested in designing ``sub-optimal yet good'' recovery algorithms $A$ whose performance $E(\cM,A)$ does not deviate that much from the optimal $\delta^{\worstcase}(\cM)$. We next present a simple approach based on approximating $\cM$ with a linear approximation space $V_n$. We are going to see that using linear reduced model spaces $V_n$ as we did for forward reduced modeling (see \cref{sec:fwd:linear}) is a pretty good choice when the Kolomogorov width $d_n(\cM)$ decays fast, although this choice comes with a couple of caveats that we will point out.

\subsection{The Parametrized Background Data-Weak Algorithm (PBDW)}
\label{sec:linearPBDW}
Given a measurement space $\Wm$ and a reduced model $V_n$ with $1\leq n\leq m$, the PBDW algorithm is defined as the mapping
\begin{align}
    A_{m,n}:\Wm & \to V                                                                                               \\
    \omega      & \mapsto A_{m,n}(\omega) = \argmin_{v \in \omega + \Wm^\perp } \norm{v-P_{V_n}v} \label{eq:pbdw-lin}
\end{align}
The following theorem shows that the solution to the above minimization problem exists and is unique under some conditions that we will assume are satisfied in the rest of our developments. The result requires introducing the quantity
\begin{equation}
    \label{eq:infsup}
    \beta(F,H):=\inf_{f\in F}\sup_{h\in H}\frac {\inner{f,h}}{\norm{f}\, \norm{h}}=\inf_{f\in F}\frac {\norm{P_{H} f}}{\norm{f}} \in [0,1]
\end{equation}
where $(F,H)$ are any pair of finite-dimensional subspaces of $V$.
We also define the orthogonal projection into $F$ restricted to $H$,
\begin{align*}
    P_{F | H} : H & \to F                  \\
    h             & \mapsto P_{F | H} (h).
\end{align*}
For any $h\in H$, $P_{F | H}(h)$ is the unique element $f\in F$ such that
$$
\inner{h - f, \tilde f}= 0,\quad  \forall \tilde f \in F.
$$

\begin{theorem}
    \label{thm:explicit-sol-pbdw}
    Let $\Wm$ and $V_n$ be an observation space and a reduced basis such that $\beta(V_n, \Wm)>0$. Then the linear PBDW algorithm defined in \eqref{eq:pbdw-lin} is given by
    \begin{equation}
        \label{eq:explicit-u}
        A_{m,n}(\omega) = \omega + v^*_{m,n}(\omega) - P_\Wm\prt{v^*_{m,n}(\omega)} \; \in \widetilde{V}_n \coloneqq V_n\oplus(V_n^\perp\cap\Wm),
    \end{equation}
    with
    \begin{equation}
        \label{eq:explicit-v}
        v^*_{m,n}(\omega) = \prt{P_{V_n | \Wm} P_{\Wm | V_n}}^{-1} P_{V_n | \Wm} (\omega).
    \end{equation}
\end{theorem}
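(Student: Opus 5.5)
The plan is to reduce the minimization in \eqref{eq:pbdw-lin} to a finite-dimensional problem over $V_n$. Write any admissible $v\in\omega+\Wm^\perp$ as $v=\omega+\eta$ with $\eta\in\Wm^\perp$. Since $\norm{v-P_{V_n}v}=\min_{z\in V_n}\norm{v-z}$, the problem is a joint minimization
\[
\min_{\eta\in \Wm^\perp}\min_{z\in V_n}\norm{\omega+\eta-z}.
\]
We exchange the two minimizations. For fixed $z\in V_n$, the best $\eta\in\Wm^\perp$ is $\eta=-P_{\Wm^\perp}(\omega-z)=P_{\Wm^\perp}z$, because $\omega\in\Wm$. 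The residual is then $P_\Wm(\omega-z)=\omega-P_\Wm z$. The problem therefore becomes
\[
\min_{z\in V_n}\norm{\omega-P_\Wm z}.
\]
This is a least-squares problem in the finite-dimensional space $P_\Wm V_n\subset\Wm$.

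Next we solve the reduced problem. The normal equations read $\inner{\omega-P_\Wm z,P_\Wm \tilde z}=0$ for all $\tilde z\in V_n$. Since $\omega-P_\Wm z\in\Wm$, this is equivalent to $\inner{\omega-P_\Wm z,\tilde z}=0$ for all $\tilde z\in V_n$, that is,
\[
P_{V_n|\Wm}P_{\Wm|V_n} z = P_{V_n|\Wm}\,\omega .
\]
The key step, and the main obstacle, is showing that the operator $P_{V_n|\Wm}P_{\Wm|V_n}:V_n\to V_n$ is invertible. For $z\in V_n$ we have $\inner{P_{V_n|\Wm}P_{\Wm|V_n}z,z}=\norm{P_\Wm z}^2\ge \beta(V_n,\Wm)^2\norm{z}^2$. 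So the operator is symmetric positive definite on the finite-dimensional space $V_n$, hence invertible. The condition $\beta(V_n,\Wm)>0$ enters exactly here; it also forces $n\le m$. Consequently the minimizer $z=v^*_{m,n}(\omega)$ is unique and given by \eqref{eq:explicit-v}. The functional is strictly convex in $z$ because $P_\Wm$ is injective on $V_n$, which confirms uniqueness.

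We then recover $A_{m,n}(\omega)$ and check uniqueness of $v$ itself. The optimal element is $v=\omega+P_{\Wm^\perp}v^*=\omega+v^*-P_\Wm v^*$, which is \eqref{eq:explicit-u}. Uniqueness follows because the optimal pair $(\eta,z)$ is unique: $z$ is unique as shown, and $\eta$ is determined by $z$. One should also check that at this pair the projection $P_{V_n}v$ equals $v^*$, so that $v^*$ is indeed the best approximation of $v$ in $V_n$. This follows from the joint optimality.

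Finally, we verify $A_{m,n}(\omega)\in V_n\oplus(V_n^\perp\cap\Wm)$. Write $A_{m,n}(\omega)-v^*=\omega-P_\Wm v^*$. This element lies in $\Wm$ and, by the normal equations, is orthogonal to $V_n$. Hence it belongs to $V_n^\perp\cap\Wm$. The sum is direct because $V_n\cap V_n^\perp=\{0\}$.
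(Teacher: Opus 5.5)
Your proof is correct and follows essentially the same route as the paper's: write $v=\omega+\eta$ with $\eta\in\Wm^\perp$, swap the two minimizations to reduce to the least-squares problem $\min_{z\in V_n}\norm{\omega-P_{\Wm}z}$, and solve its normal equations $P_{V_n|\Wm}P_{\Wm|V_n}z=P_{V_n|\Wm}\omega$. Your coercivity bound $\inner{P_{V_n|\Wm}P_{\Wm|V_n}z,z}=\norm{P_{\Wm}z}^2\ge\beta(V_n,\Wm)^2\norm{z}^2$ justifies invertibility more cleanly than the paper's injectivity/adjoint remark, and your explicit checks of the uniqueness of the minimizer and of $A_{m,n}(\omega)\in\widetilde{V}_n$ fill in points the paper's proof leaves implicit.
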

\begin{proof}
    By \cref{eq:pbdw-lin}, $A_{m,n}(\omega)$ is a minimizer of
    \begin{align*}
        \min_{u \in \omega + \Wm^\perp } \dist(u, V_n)^2
        & = \min_{u \in \omega + \Wm^\perp } \min_{v\in V_n} \Vert u - v\Vert^2                                                                \\
        & = \min_{v\in V_n}  \min_{\eta \in \Wm^\perp} \Vert \omega + \eta - v \Vert^2 \quad \text{($u=\omega+\eta$ with $\eta\in \Wm^\perp$)} \\
        & = \min_{v\in V_n} \Vert \omega -v - P_{\Wm^\perp}(\omega -v) \Vert^2                                                                 \\
        & = \min_{v\in V_n} \Vert \omega -v + P_{\Wm^\perp}(v) \Vert^2                                                                         \\
        & = \min_{v\in V_n} \Vert \omega - P_{\Wm}(v) \Vert^2. \label{eq:vn}
    \end{align*}
    The last minimization problem is a classical strongly convex, least squares optimization. Any minimizer $v^*_{m,n}=v^*_{m,n}(\omega)\in V_n$ satisfies the normal equations
    $$
    P^*_{\Wm|V_n}  P_{\Wm|V_n} v^*_{m,n} = P^*_{\Wm|V_n} \omega,
    $$
    where $P^*_{\Wm|V_n} : V_n \to \Wm$ is the adjoint operator of $P_{\Wm|V_n}$. Note that $P^*_{\Wm|V_n}$ is well-defined since $\beta(V_n, \Wm)= \min_{v\in V_n} \Vert P_{\Wm|V_n} v \Vert / \Vert v \Vert >0 $, which implies that $P_{\Wm|V_n}$ is injective and thus admits an adjoint. Furthermore, since for any $\omega \in \Wm$ and $v\in V_n$, $\inner{ v, \omega }=\inner{ P_{\Wm|V_n} v , \omega } = \inner{ v , P_{V_n|\Wm} \omega }$, it follows that $P^*_{\Wm|V_n} = P_{V_n|\Wm}$, which finally yields that the unique solution of the least squares problem is
    $$
    v^*_{m,n}(\omega) = \prt{P_{V_n | \Wm} P_{\Wm | V_n}}^{-1} P_{V_n | \Wm} (\omega) .
    $$
    Therefore $A_{m,n}(\omega) = \omega + \eta^*_{m,n}(\omega) = \omega + v^*_{m,n}(\omega) - P_\Wm v^*_{m,n}(\omega)$.
\end{proof}

The statement of \cref{thm:explicit-sol-pbdw} requires that $\beta(V_n,\Wm)>0$ because we need to guarantee injectivity of $P_{\Wm|V_n}$. For this to hold, the next Proposition shows that it is necessary (but not sufficient) that $n\leq m$.

\begin{proposition}
    \label{prop:n-m-condition}
    If $n> m$, then $\beta(V_n, \Wm)=0$.
\end{proposition}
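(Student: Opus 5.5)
The plan is a dimension-counting argument applied to the restricted projection $P_{\Wm|V_n}: V_n \to \Wm$. By the second expression in \eqref{eq:infsup}, we have
$$
\beta(V_n,\Wm)=\inf_{v\in V_n,\, v\neq 0} \frac{\norm{P_{\Wm} v}}{\norm{v}} .
$$
So it suffices to exhibit a single nonzero $v\in V_n$ with $P_\Wm v = 0$, that is, a nonzero element of $V_n\cap \Wm^\perp$. Such an element makes the ratio vanish. Since the ratio is nonnegative, this forces the infimum to be exactly $0$.

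To produce this element, I will view $P_{\Wm|V_n}$ as a linear map between finite-dimensional spaces with $\dim V_n = n$ and $\dim \Wm = m$ (we assumed the $\omega_i$ linearly independent, although $\dim \Wm\le m$ would be enough). By the rank–nullity theorem,
$$
\dim\ker P_{\Wm|V_n} = n - \operatorname{rank} P_{\Wm|V_n} \geq n - m > 0 .
$$
Hence the kernel contains some $v\neq 0$. Since $P_\Wm v=0$, this $v$ lies in $V_n\cap\Wm^\perp$, and it gives the desired vanishing ratio.

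An equivalent route, stated in the language of \cref{thm:explicit-sol-pbdw}, is to note that $P_{\Wm|V_n}$ cannot be injective when $n>m$. Injectivity was exactly the property that $\beta>0$ was used to guarantee in that proof.

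There is no real obstacle here. The only point needing care is that the infimum in \eqref{eq:infsup} is taken over $f\neq 0$; with that convention the argument is immediate. I will also remark that the converse fails, so $n\le m$ is necessary but not sufficient. For example, with $n\le m$ one can take $V_n\subset\Wm^\perp$ whenever $\Wm^\perp$ is nontrivial, and then $\beta(V_n,\Wm)=0$.
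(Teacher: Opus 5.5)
Your proposal is correct and follows the same route as the paper: you exhibit a nonzero $v\in V_n\cap\Wm^\perp$, so that $\norm{P_\Wm v}/\norm{v}=0$ forces $\beta(V_n,\Wm)=0$. The rank--nullity argument you apply to $P_{\Wm|V_n}$ makes explicit why such an element exists, a step the paper simply asserts.
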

\begin{proof}
    If $n>m$, then there exists an element $v\in V_n$ which is orthogonal to $\Wm$. Thus, for this element, $\norm{P_{\Wm}v}_V=0$ and therefore $\beta(V_n,\Wm)=0$.
\end{proof}

\subsection{Practical computation of $A_{m,n}(\omega)$ and of $\beta(V_n, \Wm)$}

\paragraph{Computation of $A_{m,n}(\omega)$:} Since $A_{m,n}(\omega)$ is given by \cref{eq:explicit-u}, computing it in practice boils down to computing $v^*_n(\omega)$. The explicit expression \eqref{eq:explicit-v} for $v^*_n(\omega)$ allows to easily derive its algebraic formulation. Let $F$ and $H$ be two finite-dimensional subspaces of $V$ of dimensions $n$ and $m$ respectively in the Hilbert space $V$ and let $\cF=\{f_i\}_{i=1}^n$ and $\cH=\{h_i\}_{i=1}^m$ be a basis for each subspace respectively. The Gram matrix associated to $\cF$ and $\cH$ is
$$
\tG(\cF, \cH) = \left(  \left< f_i, h_j\right> \right)_{\substack{1\leq i \leq n \\ 1\leq j \leq m}}.
$$
These matrices are useful to express the orthogonal projection
$P_{ F | H}: H\mapsto F$ in the bases $\cF$ and $\cH$ in terms of the matrix
\begin{equation}
    \label{eq:proj-matrix}
    \tP_{F | H} = \tG(\cF, \cF)^{-1} \tG(\cF, \cH).
\end{equation}
As a consequence, if $\cV_n = \{ v_i \}_{i=1}^n$ is a basis of the space $V_n$ and $\cW_m = \{\omega_i\}_{i=1}^m$ is the basis of $\Wm$ formed by the Riesz representers of the linear functionals $\{\ell_i\}_{i=1}^m$, the coefficients $\textbf{v}^*_{m,n}$ of the function $v^*_{m,n}$ in the basis $\cV_n$ are the solution to the normal equations
\begin{equation*}
    \label{eq:normal-eqs1}
    \tP_{V_n | \Wm} \tP_{\Wm | V_n}
    \textbf{v}^*_{m,n} =  \tP_{V_n | \Wm} \tG(\cW_m, \cW_m)^{-1} \textbf{z},
\end{equation*}
where $\textbf{z}=(\left< u, \omega_i\right>)_{i=1}^m=(z_i)_{i=1}^m$ is the vector of measurement observations that we introduced in \cref{eq:observations}. From \cref{eq:proj-matrix},
\begin{equation*}
    \begin{cases}
        \tP_{V_n | \Wm} & = \tG(\cV_n, \cV_n)^{-1} \tG(\cV_n, \cW_m), \\
        \tP_{\Wm | V_n} & = \tG(\cW_m, \cW_m)^{-1} \tG(\cW_m, \cV_n).
    \end{cases}
\end{equation*}
Usually $\textbf{v}^*_{m,n}$ is computed with a QR decomposition or any other suitable method for linear least-squares problems. Once $\textbf{v}^*_{m,n}$ is found, the vector of coefficients $\textbf{u}_{m,n}^*$ of $A_{m,n}(\omega)$ easily follows.

\paragraph{Computation of $\beta(V_n,\Wm)$:} As explained in the proof of \cref{thm:explicit-sol-pbdw}, if $n>m$, then $\beta(V_n,\Wm)=0$. We therefore focus on the case $n\leq m$, and remind that, from \cref{eq:infsup}, we have
\begin{equation*}
    \beta(V_n, \Wm) \coloneqq
    \min_{v\in V_n} \max_{\omega\in \Wm} \frac{\inner{v,w}}{\norm{v} \,\norm{w}}
    = \min_{v\in V_n} \frac{\Vert P_\Wm v \Vert}{\Vert v \Vert}.
\end{equation*}
The last equality comes from the fact that
$$
\max_{\omega\in \Wm} \frac{\inner{v,w}}{\norm{w}}
=  \max_{\omega\in \Wm} \frac{\inner{P_\Wm v,w}}{\norm{w}}
= \Vert P_\Wm v \Vert,
\quad \forall v \in V_n.
$$
Let $\cV_n = \{ v_i \}_{i=1}^n$ be a basis of the space $V_n$ and let $\textbf{c}$ be the coefficients of an element $v\in V_n$ in the basis $\cV_n$. For any nonzero $v\in V_n$, we can thus write
\begin{equation}
    \label{eq:beta-eigenvalue}
    \beta^2(V_n, \Wm) = \min_{v\in V_n}
    \frac{\Vert P_\Wm v \Vert_V^2}{\Vert v \Vert_V^2}
    =
    \min_{\textbf{c}\in \bR^n}
    \frac{\textbf{c}^T \tM(\cV_n, \cW_m) \textbf{c}}{\textbf{c}^T \tG(\cV_n, \cV_n) \textbf{c}}
\end{equation}
where
$$
\tM(\cV_n, \cW_m) \coloneqq \left( \left< P_\Wm v_i, P_\Wm v_j \right> \right)_{1\leq i, j \leq n}
$$
is a symmetric matrix.

Let us make a few remarks before giving an implementable expression for $\tM(\cV_n, \cW_m)$. First, we note that the value of $\beta(V_n, \Wm)$ does not depend on the selected bases $\cV_n$ and $\cW_m$. For example, using a basis $\widetilde \cV_n$ instead of $\cV_n$ amounts to changing the variable $\textbf{c}$ by $\widetilde{\textbf{c}} = \bU \textbf{c}$ for an invertible matrix $\bU$, and this does not affect the value of the minimizer. Second,  formula \eqref{eq:beta-eigenvalue} shows that $\beta(V_n, \Wm)$ is the smallest eigenvalue of the generalized eigenvalue problem
$$
\text{find } (\lambda, \textbf{c}) \in \bR\times \bR^n-\{0\} \quad \text{s.t.} \quad \tM(\cV_n, \cW_m) \textbf{c} = \lambda \tG(\cV_n, \cV_n) \textbf{c}.
$$
Since $\tG(\cV_n, \cV_n)$ and $\tM(\cV_n, \cW_m)$ are symmetric, positive definitive, the eigenvalues $\lambda$ are positive, and having $\beta(V_n, \Wm) >0$ is equivalent to the invertibility of $\tM(\cV_n, \cW_m)$. We can transform the generalized eigenvalue problem in a classical eigenvalue problem by multiplying by the inverse of $\tG(\cV_n, \cV_n)$. Also, remark that we have important simplifications when $\cV$ and/or $\cW_m$ are orthonomal bases since in that case $\tG(\cV_n, \cV_n) $ and $\tG(\cW_m, \cW_m)$ become the identity matrices.

We next give an explicit expression for $\tM(\cV_n, \cW_m)$. Since the coordinates in $\cV_n$ of the $i$-th basis function $v_i$ are given by the $i$-th canonical vector $\textbf{e}_i \in \bR^n$, using formula \eqref{eq:proj-matrix} we deduce that the coordinates of $P_\Wm v_i$ in $\cW_m$ are given by
$$
\textbf{p}_i \coloneqq \tP_{\Wm | V_n} \textbf{e}_i = \tG(\cW_m, \cW_m)^{-1} \tG(\cW_m, \cV_n) \textbf{e}_i,\quad \forall i \in \{1,\dots, n\}.
$$
Therefore
\begin{align*}
    \left< P_\Wm v_i, P_\Wm v_j \right>_V
    & = \textbf{p}_i^T \tG(\cW_m, \cW_m) \textbf{p}_j                                             \\
    & = \textbf{e}^T_i \tG^T(\cW_m, \cV_n)  \tG^{-1}(\cW_m, \cW_m) \tG(\cW_m, \cV_n) \textbf{e}_j
    ,\quad \forall (i,j) \in \{1,\dots, n\}^2,
\end{align*}
and
\begin{equation}
    \tM(\cV_n, \cW_m) = \tG^T(\cW_m, \cV_n)  \tG^{-1}(\cW_m, \cW_m) \tG(\cW_m, \cV_n).
    \label{eq:pbdw-correction}
\end{equation}

\begin{figure}[ht]
    \centering
    \includegraphics[scale=0.35]{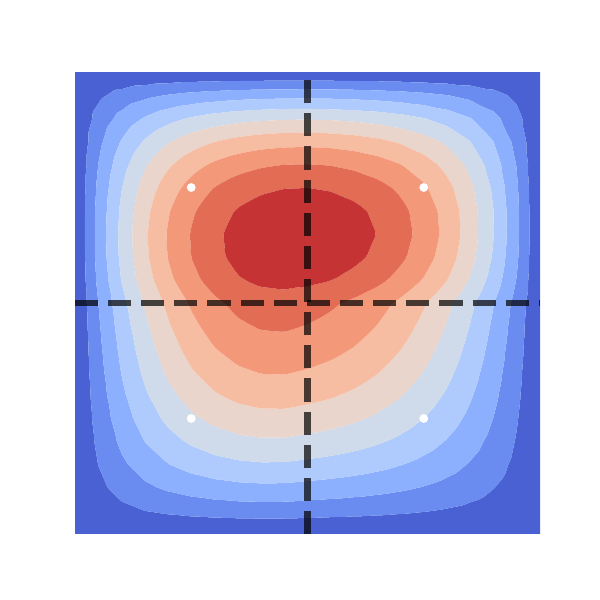}
    \includegraphics[scale=0.35]{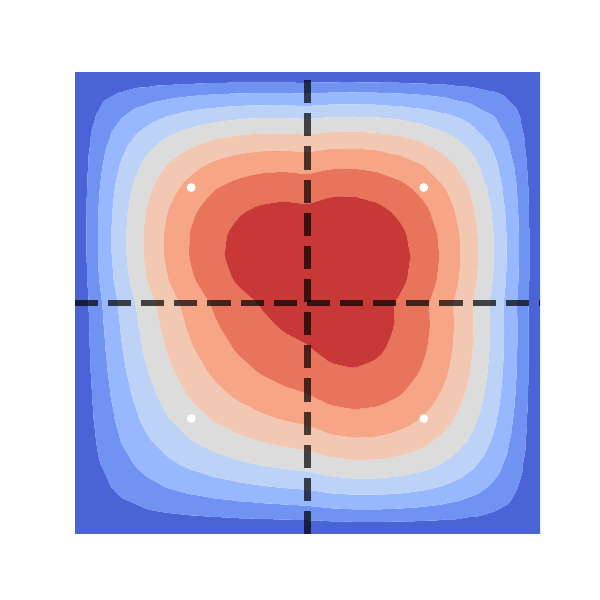}
    \includegraphics[scale=0.35]{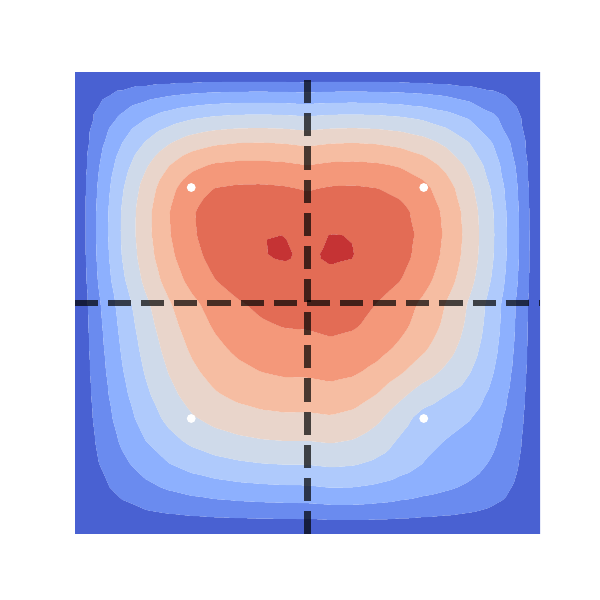}
    \caption{Following the same example as in \cref{fig:poisson} and \cref{fig:riesz_projection} we show here the true solution $u$ (left), $P_{V_n}u$ (center), that is, the best approximation of $u$ on $V_n$ for $n=2$ with $V_n$ obtained by an SVD strategy. At right the correction obtained by incorporating the riez representers information as in PBDW strategy (see \cref{eq:pbdw-correction}).}
    \label{fig:state-estimation}
\end{figure}

\subsection{Error bounds for PBDW}

Formula \eqref{eq:explicit-u} shows that $A_{m,n}$ is a bounded linear map from $\Wm$ to the space $\widetilde{V}_n$ which, as a reminder, was defined as $\widetilde{V}_n \coloneqq V_n\oplus(V_n^\perp\cap\Wm)$ in \eqref{eq:explicit-u}. We have the following bounds for the reconstruction error.

\begin{theorem}
    \label{thm:error-pbdw}
    If $\beta(V_n,\Wm)>0$, then
    \begin{equation}
        \label{eq:error-pbdw}
        \norm{u - P_{\widetilde{V}_n}u}\leq \norm{u - A_{m,n}(P_\Wm u)} \leq \beta^{-1}(V_n, \Wm) \norm{u - P_{\widetilde{V}_n}u}, \quad \forall u \in V.
    \end{equation}
\end{theorem}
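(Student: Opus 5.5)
The plan is to show that $A_{m,n}\circ P_\Wm$ is a bounded linear projection onto $\widetilde V_n$, and then to compare it with the orthogonal projection $P_{\widetilde V_n}$ through a standard oblique-projection argument.

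\textbf{Lower bound.} By \cref{thm:explicit-sol-pbdw}, $A_{m,n}(P_\Wm u)\in\widetilde V_n$ for every $u\in V$. The orthogonal projection is the best approximation in $\widetilde V_n$, so the left inequality in \eqref{eq:error-pbdw} is immediate.

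\textbf{Projection property.} Write $L\coloneqq A_{m,n}\circ P_\Wm:V\to\widetilde V_n$; it is linear by \eqref{eq:explicit-u}--\eqref{eq:explicit-v}. I will check that $Lu=u$ for all $u\in\widetilde V_n$.
\begin{itemize}
\item If $u\in V_n$, then $u\in P_\Wm u+\Wm^\perp$ and $\norm{u-P_{V_n}u}=0$, so $u$ minimizes \eqref{eq:pbdw-lin}. The minimizer is unique: the reduced least-squares problem has a unique solution $v^*$, and the minimizer equals $\omega+v^*-P_\Wm v^*$. Hence $Lu=u$.
\item If $u\in V_n^\perp\cap\Wm$, then $P_\Wm u=u$ and $P_{V_n|\Wm}u=0$. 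By \eqref{eq:explicit-v} this gives $v^*=0$, so $Lu=u$.
\end{itemize}
By linearity, $L$ is a (generally oblique) projection onto $\widetilde V_n$. Consequently $u-Lu=(I-L)(u-P_{\widetilde V_n}u)$.

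\textbf{Upper bound and main obstacle.} The key step is the operator-norm estimate $\norm{I-L}\le\beta^{-1}(V_n,\Wm)$. Since $L$ is a nontrivial idempotent on a Hilbert space, $\norm{I-L}=\norm{L}$ (Kato's identity). So it suffices to prove $\norm{Lu}\le\beta^{-1}\norm{u}$.

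Decompose $Lu=v^*+(P_\Wm u-P_\Wm v^*)$.
\begin{itemize}
\item The second term lies in $\Wm$.
\item The least-squares normal equations give $P_{V_n}(P_\Wm u-P_\Wm v^*)=0$. Hence $P_\Wm u-P_\Wm v^*\in\Wm\cap V_n^\perp$, which is orthogonal to $v^*$.
\end{itemize}
This yields
\[
\norm{Lu}^2=\norm{v^*}^2+\norm{P_\Wm u-P_\Wm v^*}^2 .
\]
From the same orthogonality, $\norm{P_\Wm u}^2=\norm{P_\Wm v^*}^2+\norm{P_\Wm u-P_\Wm v^*}^2$, and $\norm{P_\Wm v^*}\ge\beta\norm{v^*}$. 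Combining these,
\[
\norm{Lu}^2\le\beta^{-2}\norm{P_\Wm v^*}^2+\norm{P_\Wm u-P_\Wm v^*}^2\le\beta^{-2}\norm{P_\Wm u}^2\le\beta^{-2}\norm{u}^2,
\]
using $\beta\le1$.

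The point where I would be most careful is invoking $\norm{I-L}=\norm{L}$. If I prefer to avoid it, the fallback is to estimate $(I-L)$ directly on $\widetilde V_n^\perp$, where $u-P_{\widetilde V_n}u$ lives. There $u\perp V_n$ and $P_\Wm u\perp V_n$. I would then redo the Pythagorean computation above to show $\norm{u-Lu}\le\beta^{-1}\norm{u}$ for such $u$.
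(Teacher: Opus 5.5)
Your main argument is correct. It reaches the bound by a different route than the paper, although both rest on the same underlying fact.

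The paper never names the map $L=A_{m,n}\circ P_{\Wm}$. It notes that the error $e=u-A_{m,n}(P_{\Wm}u)$ lies in $\Wm^\perp$, and bounds $\beta(\Wm^\perp,\widetilde{V}_n^\perp)\norm{e}$ by $\sup_{\tilde v^\perp\in\widetilde{V}_n^\perp}\inner{e,\tilde v^\perp}/\norm{\tilde v^\perp}$. It then replaces $e$ by $u-\tilde v$ inside the inner product. Finally it identifies the constant with $\beta(V_n,\Wm)$ via \cref{lem:beta-technical}, whose second equality $\beta(\widetilde{V}_n,\Wm)=\beta(\Wm^\perp,\widetilde{V}_n^\perp)$ is imported from the literature.

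You instead exhibit $L$ as an oblique projection and quote Kato's identity $\norm{I-L}=\norm{L}$. Since $L$ has range $\widetilde{V}_n$ and kernel $\Wm^\perp$, one has $\norm{L}=\beta(\widetilde{V}_n,\Wm)^{-1}$ and $\norm{I-L}=\beta(\Wm^\perp,\widetilde{V}_n^\perp)^{-1}$. So Kato's identity for $L$ is exactly the equality the paper cites. Your Pythagorean estimate $\norm{Lu}\le\beta^{-1}\norm{P_{\Wm}u}$ then proves the nontrivial half $\beta(\widetilde{V}_n,\Wm)\ge\beta(V_n,\Wm)$ of the lemma's first equality. It also supplies the cross-term orthogonality $\inner{P_{\Wm}v,w}=\inner{v,w}=0$ for $v\in V_n$, $w\in V_n^\perp\cap\Wm$, which the paper's argument for that equality leaves implicit.

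Your version gives a transparent structural picture: the reconstruction is a projection onto $\widetilde{V}_n$ along $\Wm^\perp$, so the error only sees $u-P_{\widetilde{V}_n}u$. The paper's version keeps everything in the inf-sup language used throughout the section. Kato's identity needs $L\neq 0,I$. Here $L\neq0$ because $n\ge1$, and $L=I$ means $\widetilde{V}_n=V$, where both sides of the bound vanish.

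One caution about your fallback: the claim that $P_{\Wm}u\perp V_n$ for $u\in\widetilde{V}_n^\perp$ is false. Take $V=\bR^3$, $\Wm=\vspan\{e_1\}$, $V_n=\vspan\{e_1+e_2\}$. Then $V_n^\perp\cap\Wm=\{0\}$, so $u=e_1-e_2\in\widetilde{V}_n^\perp$, yet $\inner{P_{\Wm}u,e_1+e_2}=\inner{e_1,e_1+e_2}=1$. In general the claim would force $V_n\subset\Wm$.

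What does hold is $P_{\Wm}u\in P_{\Wm}V_n$. This follows because $\Wm$ splits orthogonally as $P_{\Wm}V_n\oplus(\Wm\cap V_n^\perp)$ and $u\perp\Wm\cap V_n^\perp$. Hence $r=0$ and $Lu=v^*$ with $P_{\Wm}v^*=P_{\Wm}u$. Since $u\perp V_n$, you get $\norm{u-Lu}^2=\norm{u}^2+\norm{v^*}^2$.

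To finish, set $a=\norm{P_{\Wm}u}$, $b=\norm{P_{\Wm^\perp}u}$, $c=\norm{P_{\Wm^\perp}v^*}$ and $t=\beta^{-2}-1$.
\begin{itemize}
\item From $0=\inner{u,v^*}=a^2+\inner{P_{\Wm^\perp}u,P_{\Wm^\perp}v^*}$ you get $a^2\le bc$.
\item Stability, $\norm{P_{\Wm}v^*}\ge\beta\norm{v^*}$, gives $c^2\le ta^2$.
\item Combining the two gives $a^2\le tb^2$.
\end{itemize}
Therefore $\norm{v^*}^2=a^2+c^2\le t\norm{u}^2$, which is $\norm{u-Lu}\le\beta^{-1}\norm{u}$ on $\widetilde{V}_n^\perp$. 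Repaired this way, the fallback gives a fully self-contained proof that needs neither Kato's identity nor the cited duality.
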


\begin{proof}
    This result was first proven in \cite{BCDDPW2017}. Here we present a new alternative proof which significantly simplies the original one.
    To show the lower bound, it suffices to notice that since $A_{m,n}(P_\Wm u)\in \widetilde{V}_n$, then
    $$
    \norm{u - A_{m,n}(P_\Wm u)}
    \geq \min_{\tilde v\in \widetilde{V}_n} \norm{u-v}
    = \norm{u - P_{\widetilde{V}_n}u}.
    $$
    The proof of the upper bound crucially relies on the fact that
    \begin{equation*}
        \beta(V_n, \Wm) = \beta(\widetilde{V}_n, \Wm) = \beta(\Wm^\perp, \widetilde{V}_n^\perp),
    \end{equation*}
    which is a statement that we prove in \cref{lem:beta-technical}. Since, by construction, $P_{\Wm}\prt*{A_{m,n}(P_\Wm u)}= P_{\Wm}(u)$, then $u-A_{m,n}(P_\Wm u) \in \Wm^\perp$, and from the definition of $\beta(\Wm^\perp, \widetilde{V}_n^\perp)$ it holds that
    $$
    \beta(\Wm^\perp, \widetilde{V}_n^\perp)
    \norm{u-A_{m,n}(P_\Wm u)}
    \leq \sup_{\tilde v^\perp \in \widetilde{V}_n^\perp}
    \dfrac{\inner{u-A_{m,n}(P_\Wm u), \tilde v^\perp}}{\norm{\tilde v^\perp}}.
    $$
    In addition, since $A_{m,n}(P_\Wm u)\in \widetilde{V}_n$, by orthogonality we have
    $$
    \inner{u-A_{m,n}(P_\Wm u), \tilde v^\perp}
    =\inner{u-\tilde v, \tilde v^\perp}, \quad \forall (\tilde v, \tilde v^\perp) \in \widetilde{V}_n\times \widetilde{V}_n^\perp
    $$
    Consequently,
    $$
    \beta(\Wm^\perp, \widetilde{V}_n^\perp)
    \norm{u-A_{m,n}(P_\Wm u)}
    \leq \sup_{\tilde v^\perp \in \widetilde{V}_n^\perp}
    \dfrac{\inner{u-\tilde v, \tilde v^\perp}}{\norm{\tilde v^\perp}}=\norm{u-\tilde v},\quad \forall \tilde v \in \widetilde{V}_n,
    $$
    Therefore, dividing by $\beta(\Wm^\perp, \widetilde{V}_n^\perp)=\beta(V_n, \Wm)>0$ \corr{(we prove this equality in \cref{lem:beta-technical})}, and minimizing over $\tilde v\in \widetilde{V}_n$ we obtain the desired bound,
    $$
    \norm{u-A_{m,n}(P_\Wm u)}
    \leq \frac{1}{\beta(V_n, \Wm)} \norm{u- P_{\widetilde{V}_n}u}.
    $$
\end{proof}

\begin{lemma}
    \label{lem:beta-technical}
    Suppose that $\beta(V_n, \Wm)>0$. Then it holds that
    \begin{equation*}
        \beta(V_n, \Wm) = \beta(\widetilde{V}_n, \Wm) = \beta(\Wm^\perp, \widetilde{V}_n^\perp).
    \end{equation*}
\end{lemma}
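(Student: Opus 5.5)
The plan is to prove the two equalities separately. The first, $\beta(V_n,\Wm)=\beta(\widetilde V_n,\Wm)$, follows from the orthogonal structure of $\widetilde V_n = V_n\oplus(V_n^\perp\cap\Wm)$. The second, $\beta(\widetilde V_n,\Wm)=\beta(\Wm^\perp,\widetilde V_n^\perp)$, follows from a general duality identity expressing $\beta$ through the operator norm of a product of orthogonal projections. That identity holds for arbitrary closed subspaces, so the infinite dimensionality of $\Wm^\perp$ causes no difficulty.

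\textbf{First equality.} Since $V_n\subset\widetilde V_n$, the infimum in \eqref{eq:infsup} is taken over a larger set for $\widetilde V_n$, so $\beta(\widetilde V_n,\Wm)\le\beta(V_n,\Wm)$. For the reverse inequality, write any $\tilde v\in\widetilde V_n$ as $\tilde v=v+w'$ with $v\in V_n$ and $w'\in V_n^\perp\cap\Wm$.
\begin{itemize}
\item Orthogonality gives $\norm{\tilde v}^2=\norm{v}^2+\norm{w'}^2$.
\item Since $w'\in\Wm$, we have $P_\Wm\tilde v=P_\Wm v+w'$.
\item The cross term vanishes because $\inner{P_\Wm v,w'}=\inner{v,w'}=0$.
\end{itemize}
Hence
\[
\norm{P_\Wm\tilde v}^2=\norm{P_\Wm v}^2+\norm{w'}^2\ge \beta(V_n,\Wm)^2\norm{v}^2+\norm{w'}^2\ge\beta(V_n,\Wm)^2\norm{\tilde v}^2,
\]
using $\beta\le 1$ in the last step. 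This proves $\beta(\widetilde V_n,\Wm)\ge\beta(V_n,\Wm)$. Note that this step does not use the hypothesis $\beta(V_n,\Wm)>0$.

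\textbf{Second equality.} For closed subspaces $X,Y\subset V$, Pythagoras gives $\norm{P_Y x}^2=\norm{x}^2-\norm{P_{Y^\perp}x}^2$. Therefore
\[
\beta(X,Y)^2 = 1-\sup_{x\in X\setminus\{0\}}\frac{\norm{P_{Y^\perp}x}^2}{\norm{x}^2} = 1-\norm{P_{Y^\perp}P_X}^2,
\]
where the last step uses that $P_{Y^\perp}P_X$ vanishes on $X^\perp$ and $P_X$ is the identity on $X$, so the supremum over $X$ equals the operator norm on all of $V$. Applying the same formula with the pair $(Y^\perp,X^\perp)$, and using $(X^\perp)^\perp=X$ for closed $X$, gives
\[
\beta(Y^\perp,X^\perp)^2=1-\norm{P_X P_{Y^\perp}}^2.
\]
Orthogonal projections are self-adjoint, so $P_XP_{Y^\perp}=(P_{Y^\perp}P_X)^*$, and an operator and its adjoint have the same norm. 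Hence $\beta(X,Y)=\beta(Y^\perp,X^\perp)$. We apply this with $X=\widetilde V_n$, which is finite dimensional and hence closed, and $Y=\Wm$.

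\textbf{Main obstacle.} I expect the only delicate point to be justifying the operator-norm reformulation when the infimum in \eqref{eq:infsup} is taken over the infinite-dimensional space $\Wm^\perp$. Definition \eqref{eq:infsup} was stated for finite-dimensional pairs, so I would interpret $\beta$ there as an infimum rather than a minimum. With that reading, the identity above involves only suprema and operator norms, and no attainment is needed.

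It is also worth checking the role of the hypothesis $\beta(V_n,\Wm)>0$. It is not needed for the identities themselves. It ensures that $P_{\Wm|V_n}$ is injective, and then $V_n^\perp\cap\Wm=\Wm\cap(P_\Wm V_n)^\perp$ has dimension $m-n$, so $\dim\widetilde V_n=m$. This shows the statement is non-degenerate, and it is what makes the common value of $\beta$ positive, which \cref{thm:error-pbdw} requires.
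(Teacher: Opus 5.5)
Your proof is correct.

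\textbf{First equality.} You use the same idea as the paper: the added component lies in $W_m$, so it has ratio one and cannot lower the infimum. The paper's argument, however, only looks at elements of $V_n^\perp\cap W_m$ and of $V_n$ separately. Your Pythagorean computation, with the vanishing cross term $\langle P_{W_m}v,w'\rangle=\langle v,w'\rangle=0$, handles a general $\tilde v=v+w'$. That is the step the argument actually needs, and it is left implicit in the paper.

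\textbf{Second equality.} Here the routes genuinely differ. The paper invokes Proposition A.1 of \cite{MMPY2015} as a black box. You instead derive the duality $\beta(X,Y)=\beta(Y^\perp,X^\perp)$ for arbitrary closed subspaces, starting from $\beta(X,Y)^2=1-\|P_{Y^\perp}P_X\|^2$ and using $\|A\|=\|A^*\|$.

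\textbf{What each buys.} The citation buys brevity. Your argument buys a self-contained proof in which the infinite dimensionality of $W_m^\perp$ is visibly harmless, since only suprema and operator norms appear. It also makes clear that the hypothesis $\beta(V_n,W_m)>0$ plays no role in the identities themselves and matters only downstream, in the error bound of the PBDW theorem.
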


\begin{proof}
    Since we assume $\beta(V_n, \Wm)>0$, necessarily $n\leq m$ by \cref{prop:n-m-condition}. Now, to see that $\beta(V_n, \Wm) = \beta(\widetilde{V}_n, \Wm)$, we consider an element $\tilde v \in V_n^\perp \cap \Wm \subset \widetilde{V}_n$. Since $\tilde v\in \Wm$, $\norm{P_{\Wm}\tilde v}/\norm{\tilde v}=1$. Therefore, since $\beta(\widetilde{V}_n, \Wm)\leq 1$,
    $$
    \beta(\widetilde{V}_n, \Wm)
    = \min_{\tilde v\in \widetilde{V}_n} \frac{\norm{P_{\Wm}(\tilde v)}}{\norm{\tilde v}}
    = \min_{v\in V_n} \frac{\norm{P_{\Wm} v}}{\norm{v}} = \beta(V_n,\Wm).
    $$
    The equality $\beta(\widetilde{V}_n, \Wm) = \beta(\Wm^\perp, \widetilde{V}_n^\perp)$ is a direct application of \cite[Proposition A.1]{MMPY2015}.
\end{proof}

As a direct consequence of \cref{thm:error-pbdw}, we can derive error bounds on the reconstruction quality of $A_{m,n}$ on the whole set $\cM$. This is recorded in the next Corollary.

\begin{corollary}
    It holds that
    \begin{align}
        \cE^{\worstcase}\prt{\cM, \widetilde{V}_n}\leq \rE^\worstcase(\cM, A_{n,m})               & \leq \beta^{-1}(V_n, \Wm) \cE^{\worstcase}\prt{\cM, \widetilde{V}_n}        \\
        \cE^{\averagecase}_\rho\prt{\cM, \widetilde{V}_n}\leq \rE^\averagecase_\rho(\cM, A_{n,m}) & \leq \beta^{-1}(V_n, \Wm) \cE^{\averagecase}_\rho\prt{\cM, \widetilde{V}_n}
    \end{align}
\end{corollary}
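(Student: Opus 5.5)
The plan is to apply \cref{thm:error-pbdw} pointwise to each element $u\in\cM$ and then aggregate, first by a supremum (worst case) and then by an $L^2(\Theta,\rho)$ integral (average case). The key observation is that the bound in \eqref{eq:error-pbdw} holds for every $u\in V$ with a constant $\beta^{-1}(V_n,\Wm)$ independent of $u$. We assume throughout, as in \cref{thm:error-pbdw}, that $\beta(V_n,\Wm)>0$.

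For the worst case, fix $\theta\in\Theta$ and apply \eqref{eq:error-pbdw} to $u=u(\theta)$. The quantity $\norm{u(\theta)-P_{\widetilde V_n}u(\theta)}$ is exactly $e(u(\theta),\widetilde V_n)$, since $\widetilde V_n$ is a finite-dimensional subspace and $P_{\widetilde V_n}$ realizes the best approximation. We therefore get
\[
e(u(\theta),\widetilde V_n)\le \norm{u(\theta)-A_{m,n}(P_\Wm u(\theta))}\le \beta^{-1}(V_n,\Wm)\, e(u(\theta),\widetilde V_n).
\]
Taking the maximum over $\theta\in\Theta$ in all three terms preserves the inequalities. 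The maxima exist because $\Theta$ is compact, $u$ is continuous, and $A_{m,n}$ is a bounded linear map; suprema would suffice in any case. This yields the first line, recalling the definitions of $\cE^{\worstcase}$ and $\rE^{\worstcase}$.

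For the average case, we square the same pointwise chain of nonnegative quantities and integrate against $\rho$. Integrability follows from \eqref{eq:u-param-integrability}, since both $P_{\widetilde V_n}$ and $A_{m,n}\circ P_\Wm$ are bounded linear maps, so all integrands are dominated by a constant times $\norm{u(\theta)}^2$. Because integration is monotone and $\beta^{-2}$ is a constant that can be pulled out, taking square roots gives the second line.

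There is no real obstacle here. The only points needing care are identifying $\norm{u-P_{\widetilde V_n}u}$ with $e(u,\widetilde V_n)$ and checking measurability and integrability for the average case, both of which are routine.
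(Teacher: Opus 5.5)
Your proposal is correct and follows the paper's route: apply \cref{thm:error-pbdw} to each $u(\theta)\in\cM$, take the maximum over $\theta$ for the worst case, and for the average case square, integrate against $\rho$, and take square roots. Your remarks on attainment of the maxima and on measurability and integrability make explicit what the paper's one-line proof leaves implicit.
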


\begin{proof}
    It suffices to maximize \eqref{eq:error-pbdw} over $u\in \cM$ or take the expectation to obtain the inequalities.
\end{proof}

\subsection{Interpretation of $\beta(V_n, \Wm)$ and other remarks}

The lower bound \eqref{eq:error-pbdw} implies that $A_{m,n}(\omega)$ provides the best reconstruction in $\widetilde{V}_n$ that we can expect as the reconstruction error is given by the best approximation of $u$ in $\widetilde{V}_n$, which is the orthogonal projection of $u$ into $\widetilde{V}_n$.

The upper bound of \eqref{eq:error-pbdw} shows that we deviate from the best approximation up to the multiplicative constant $\beta^{-1}(V_n, \Wm)$. For this reason, we can view $\beta(V_n, \Wm)$ as a stability factor. This quantity can be interpreted as the cosine of the angle between $V_n$ and $\Wm$ (see \cref{fig:angle}),
$$
\beta(V_n, \Wm) = \cos(\theta_{V_n, \Wm}).
$$
The stability of the algorithm thus depends on an interplay between the approximation space $V_n$ and the observation space $\Wm$. Of course, if $\Wm$ is given, by choosing the reduced space $V_n=\Wm$, we would maximize the stability constant, and we would have $\beta(V_n, \Wm)=1$. This is the most stable situation, but this choice also affects the quality of approximation: in general, the observation space $\Wm$ has rather poor approximation properties so by choosing $V_n=\Wm$, we degrade $\cE^{\worstcase}\prt*{\cM, \widetilde{V}_n}$. Alternatively, if the application allows for it, we can work with a reduced model $V_n$ that gives a good accuracy (e.g.~the one from forward reduced order modeling), and we can consider the problem of optimizing the choice of the measurement space $\Wm$. This would be done by searching for optimal $\ell_i$ (respectively $\omega_i$) from a given dictionary of admissible linear functionals. We do not cover the topic of optimal sensor placement in this tutorial but we refer to \cite{MMT2016, BCMN2018} for works on this topic, and to \cite{Mula2022} for a tutorial covering this aspect.

Another interesting observation is that for a fixed measurement space $\Wm$ we still have the freedom to tune the dimension $n$ of $V_n$ to improve accuracy. Consequently, we can search for
$$
n^* = \argmin_{1\leq n\leq m}\; \rE^\star(A_{n,m}, \cM),\quad \text{with }\star=\{\worstcase, \averagecase\}
$$
and work with $A_{m,n^*}$. Contrary to what one might expect at first glance, the best performance is not achieved in general when we take $n$ as large as possible, namely $n=m$. In fact, there is a sweet spot due to the trade-off between the increase of the approximation properties of $V_n$ as $n$ grows, and the degradation of the stability of the algorithm, given here by the decrease of $\beta(V_n, \Wm)$ to 0 as $n\to m$.

We finish this section by connecting the algorithm $A_{m,n}$ with the concept of optimal algorithms in the sense of $\delta^\worstcase(\cM)$. For this, suppose that the reduced model $V_n$ is such that we approximate the elements of $\cM$ at $\eps_n>0$ accuracy,
$$
\cE^\worstcase(\cM; V_n) \leq \eps_n.
$$
As a consequence, $\cM$ is included in the cylinder (see \cref{fig:Kn})
\begin{equation}
    \label{eq:cylinder}
    \cM \subset \cK_n\coloneqq \{ v\in V \; : \: {\rm dist}(v,V_n)\leq \eps_n\}.
\end{equation}
We can prove (see \cite{CDDFMN2020}) that $A_{m,n}$ is optimal among all linear and nonlinear algorithms when $\cM$ is replaced by the simpler containment set $\cK_n$, that is
$$
A_{m,n} = \argmin_{A:\Wm\to V}\; \rE^{\worstcase}(A, \cK_n).
$$

\begin{figure}
    \centering
    \begin{subfigure}[t]{0.4\textwidth}
        \centering
        \includegraphics[width=\textwidth]{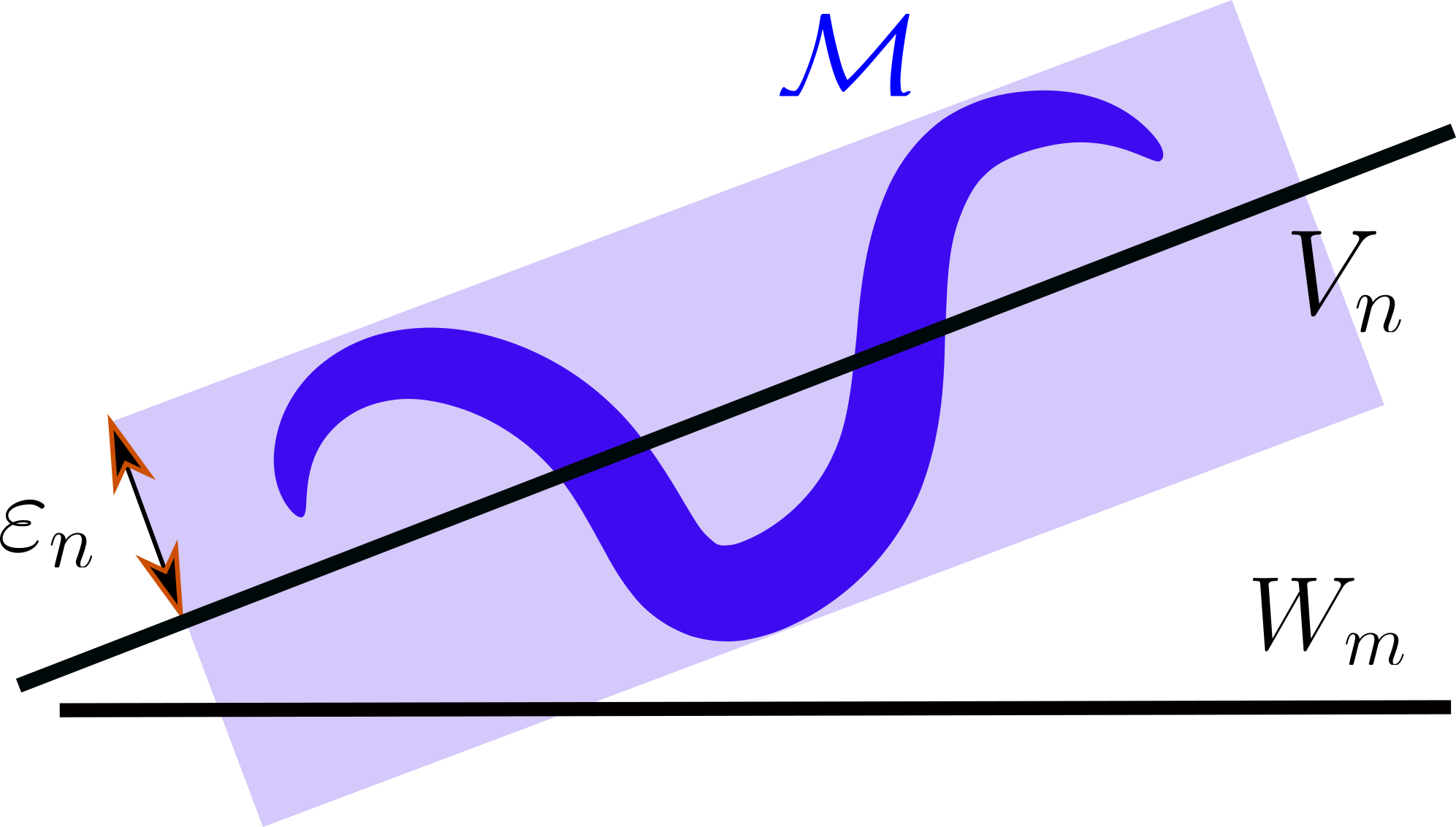}
        \caption{A manifold $\cM$, a linear space $V_n$ with accuracy $\eps_n$, and the cylinder $\cK_n$ from \cref{eq:cylinder}.}
        \label{fig:Kn}
    \end{subfigure}
    \begin{subfigure}[t]{0.4\textwidth}
        \centering
        \includegraphics[width=0.8\textwidth]{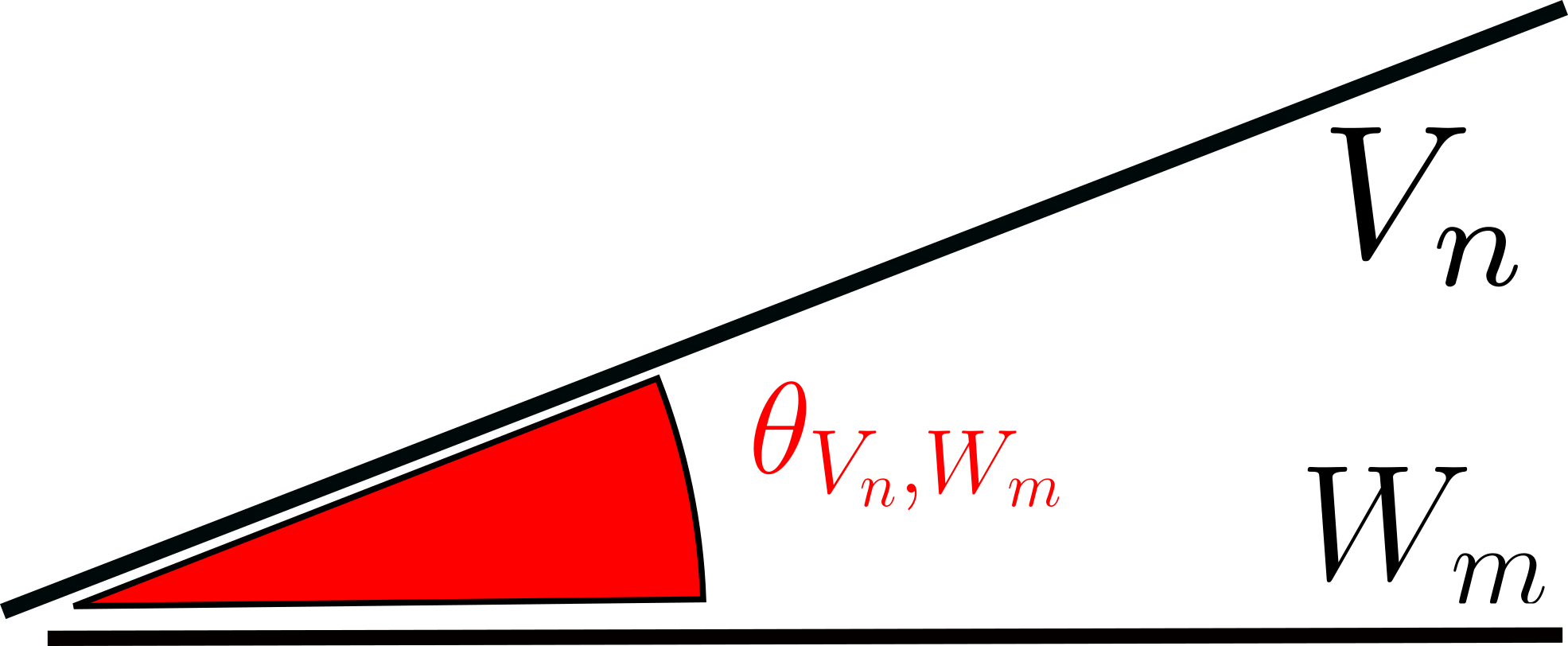}
        \caption{Angle between $V_n$ and $\Wm$.}
        \label{fig:angle}
    \end{subfigure}
    \caption{The concepts associated to the linear reconstruction algorithm.}
    \label{fig:one-space}
\end{figure}

\subsection{Further Reading and Applications}
This document covers quite extensively how to work with linear approximation subspaces $V_n$. Extending this setting to nonlinear spaces $V_n$ is a very dynamic research direction for which several milestones have already been achieved. First, the setting can easily be extended to affine approximation spaces of the form $\bar u+V_n$, as is explained in \cite{CDDFMN2020}. Next, one can take affine spaces as a building block to construct piece-wise affine approximations. The idea is to build a partition of the manifold $\cM$ into $K$ disjoint subsets, $\cM=\cup_{k=1}^K \cM_k$ with $\cM_k \cap \cM_{k'}=\emptyset$ if $k\neq k'$. We approximate the elements of each subset $\cM_k$ with affine subspaces $\bar u_k+V_n^{(k)}$. To do state estimation, given the observation $\omega$, we need to find a procedure to detect in which subset $\cM_k$ lies the target function $u$, and then we reconstruct with the affine version of PBDW. A strategy to implement this idea was proposed in \cite{CDMN2022}, and it was also shown in that if we are willing to have enough partitions, then the piece-wise affine strategy is near-optimal in the sense that its performance will get close to $\delta^{\worstcase}(\cM)$. Additionally, we refer to \cite{NP2024} for another construction based on randomization. Another relevant extension is the one introduced in \cite{MPV2025} where the linear approximation paradigm is made time-dependent with dynamical approximation techniques which can, in addition, preserve certain structures such as symplecticity. A first contribution involving fully-nonlinear approximation families was proposed in \cite{AABGMT2025}.

\new{Last but not least, we mention that the above-discussed paradigm has found numerous applications including biomedical problems \cite{TPYP2018, GGLM2021, GLM2021, GLM2022, galarce2023displacement, galarce2025bias, mantegazza2026non}, food freezing \cite{GRPCC2025},  electromagnetism \cite{alahyane2026optimal}, pollution dispersion \cite{HCBM2019, DMS2024} and nuclear engineering \cite{ABCGMM2018, TLMMT2023, bao2025spatiotemporal, riva2025data, riva2026real}. The idea of PBDW and the stability concept connected to the parameter $\beta(\Vn, \Wm)$ has also recently been used to develop dynamical sampling strategies to compute integrals arising in the framework of solving forward problems with fully nonlinear parametric approximations of the type introduced in \eqref{eq:decoder}.}

    \begin{multicols}{2}
  \begin{acronym}[WLOG] \itemsep = 0.2em
    \acro{gd}[GD]{gradient descent}
    \acro{lhs}[LHS]{left-hand side}
    \acro{mds}[MDS]{multi-dimensional scaling}
    \acro{pde}[PDE]{Partial Differential Equation}
    \acro{ppde}[pPDE]{parametrized Partial Differential Equation}
    \acro{relu}[ReLU]{rectified linear unit}
    \acro{rhs}[RHS]{right-hand side}
    \acro{rom}[ROM]{Reduced Order Modeling}
    \acro{sgd}[SGD]{stochastic gradient descent}
    \acro{silu}[SiLU]{sigmoid-weighted linear unit}
    \acro{wlog}[WLOG]{without loss of generality}
    \acro{wsn}[WSN]{wireless sensor network}
  \end{acronym}
\end{multicols}

    \section*{Bibliography}
    \addcontentsline{toc}{section}{Bibliography}

    \begingroup
    \bibliography{./content/references.bib}
    \endgroup
\end{document}